\title[On the Hausdorff dimensions of exploding measures originated from IFS]{On the Hausdorff dimensions of exploding measures originated from IFS}
\author{Liangang Ma}
\address{Dept.\ of Mathematical Sciences, Binzhou University, Huanghe 5th Road No. 391, Binzhou 256600, Shandong, P. R. China} 
\email{maliangang000@163.com}
\thanks{The work is supported by ZR2019QA003 from SPNSF and 12001056 from NSFC}    
\newtheorem{theorem}[subsection]{Theorem}
\newtheorem{lemma}[subsection]{Lemma}
\newtheorem{Young's Lemma}[subsection]{Young's Lemma}
\newtheorem{MU Theorem}[subsection]{Mauldin-Urba\'nski Theorem}
\newtheorem{SSU Theorem}[subsection]{Simon-Solomyak-Urba\'nski Theorem}
\newtheorem{Prohorov's Theorem}[subsection]{Prohorov's Theorem}
\newtheorem{proposition}[subsection]{Proposition}
\newtheorem{corollary}[subsection]{Corollary}
\newtheorem{definition}[subsection]{Definition}
\newtheorem{exm}[subsection]{Example}
\newtheorem{rem}[subsection]{Remark}
\newcommand{\bm}[1]{\mbox{\boldmath{$#1$}}}
\numberwithin{equation}{section}
\DeclareMathOperator*{\esssup}{ess\,sup}
\begin{document} 

\begin{abstract}
   
   We study continuity and discontinuity properties of some popular measure-dimension mappings under some topologies on the space of probability measures in this work. We give examples to show that no continuity can be guaranteed under general weak, setwise or TV topology on the space of measures for any of these measure-dimension mappings. However, in some particular circumstances or by assuming some restrictions on the measures, we do have some (semi-)continuity results.  We then apply our continuity results to concerning measures appearing in two kinds of infinite iterated function systems, namely, CIFS and CGDMS , to show the convergence of the Hausdorff dimensions of the concerning measures induced from the finite sub-systems of these infinite systems. These applications  answer  a problem of Mauldin-Urba\'nski originally posed on $t$-conformal measures for CIFS in the last 90s positively.  Finally we indicate more applications of our techniques in some more general circumstances and give some remarks on the relationship between the Hausdorff dimensions of measures and their logarithmic density in general settings.

 \end{abstract}
 
 \maketitle

\section{Introduction}\label{sec1}

Let $X$ be a metric space endowed with a metric $\rho$. Denote by $\mathcal{M}(X)$ to be the collection of all the probability measures on $(X,\mathscr{A})$ with Borel $\sigma$-algebra $\mathscr{A}$. By normalizations if necessary most of our notions and results have their extensions onto the space of finite measures $\tilde{\mathcal{M}}(X)$. There are seldom escape or attraction of mass for our interested measures, except some special cases, for example, in our Lemma \ref{lem11}.  Let $T: X\rightarrow X$ be a transformation. Denote by $\mathcal{M}_\sigma(X)$ and $\mathcal{M}_e(X)$ respectively to be the collections of all the invariant and ergodic probability measures on $(X,\mathscr{A})$, with respect to $T$.  For a set $A\subset X$, let $HD(A)$ be its Hausdorff dimension. 

\begin{definition}
For a measure $\nu\in\mathcal{M}(X)$,  the \emph{lower} and \emph{upper measure-dimension mappings} from $\mathcal{M}(X)$ to $[0,\infty)$ are defined respectively to be:

\begin{center}
$dim_H^0 \nu=\inf\{HD(A): \nu(A)>0, A\in\mathscr{A}\},$  
\end{center}
and
\begin{center}
$dim_H^1 \nu=\inf\{HD(A): \nu(A)=1, A\in\mathscr{A}\}.$
\end{center}
\end{definition}

Generally, the two measure-dimension mappings measure how well the mass is distributed over the space $X$ (see \cite[Section 1.3]{Fal1} by Falconer for a general description of the idea of mass distributions). We have the obvious relationship that 
\begin{center}
$dim_H^0 \nu\leq  dim_H^1 \nu$
\end{center}
for any $\nu\in \mathcal{M}(X)$. For different aims and tastes, people usually pay attention to only one kind of the meterage for their measures interested, while the two measure-dimension mappings can in fact take exact values of big differences from each other at certain measures in $\mathcal{M}(X)$. However, they two coincide with each other on $\mathcal{M}_e(X)$, if the transformation $T$ is equipped with some appropriate restrictions. The transformation $T: X\rightarrow X$ is said to be \emph{inverse-dimension-expanding} if there exists some $A\in\mathscr{A}$, such that 
\begin{center}
$HD(T^{-1}(A))>HD(A)$.
\end{center}

\begin{Young's Lemma}
For a transformation $T$ on $(X,\mathscr{A})$ such that $T$ is not inverse dimension-expanding, then we have
\begin{center}
$dim_H^0 \nu=  dim_H^1 \nu$
\end{center}
for any ergodic measure $\nu\in \mathcal{M}_e(X)$ with respect to $T$.
\end{Young's Lemma} 
\begin{proof}
See \cite[P115]{You}.
\end{proof}

P. Mattila, M. Mor\'an and J. M. Rey made a systematic  study of the desiring properties of the  measure-dimension mappings  $dim_H^0$,  $dim^1_H$, the correlation measure-dimension mapping $dim_C$ and the modified correlation measure-dimension mapping $dim_{MC}$ (see Definition \ref{def5} and \ref{def6}) in \cite{MMR}. However, no continuity property is discussed there. Endowing the space $\mathcal{M}(X)$ (respectively, $\mathcal{M}_\sigma(X)$ or $\mathcal{M}_e(X)$) with the weak, setwise or total variation (TV) topology, we construct some examples of sequences of measures convergent under these topologies  whose sequences of dimensions diverge, or do not converge to the dimensions of their limit measures. We also prove some (semi-)continuity results of the measure-dimension mappings  $dim_H^0$ and  $dim^1_H$ under the setwise topology on  $\mathcal{M}(X)$. The proof of these results are quick, compared with their extensive applications in various situations.

Intuitively, a measure $\nu$ with full dimension $HD(X)$ is best distributed on the ambient space $X$. There are huge amount of work on calculations of the value $dim_H^0 \nu$ or $dim_H^1 \nu$ for various $\nu\in\mathcal{M}(X)$. The local logarithmic density  is a good indicator on the dimension of the measures.  For $x\in X$ and $r\geq 0$, let $B(x,r)$ be the closed ball centered at $x$ with radius $r$. We call
\begin{center}
$\underline{d}(\nu,x)=\liminf_{r\rightarrow 0}\cfrac{\log \nu(B(x,r)) }{\log r}$
\end{center}
and 
\begin{center}
$\overline{d}(\nu,x)=\limsup_{r\rightarrow 0}\cfrac{\log \nu(B(x,r)) }{\log r}$
\end{center}
the \emph{lower} and \emph{upper logarithmic density} of the measure $\nu$ at $x$  respectively. If the two numbers coincide, we call the number $d(\nu,x)$ the \emph{logarithmic  density} of the measure $\nu$ at $x$. Young \cite[Proposition 2.1]{You} gave a sufficient condition to decide the dimension of $\nu$ in terms of its logarithmic density. She proved that, for a measure $\nu\in\mathcal{M}(X)$, if there exists a non-negative real $c$ satisfying 
\begin{center}
 $\nu\{x\in X: d(\nu,x)=c\}=1$,
\end{center}
then it is sure that
\begin{center}
$dim_H^0 \nu=dim^1_H \nu=c$.
\end{center}
We will give an example to show that the condition is not necessary for a measure  $\nu$ to be of dimension $c$ in Section \ref{sec5}. 

The correlation dimension $\dim_C \nu$ of a measure $\nu\in\mathcal{M}(X)$ highlights the average logarithmic density of the measure $\nu$. The notion is introduced by P. Grassberger and I. Procaccia. It has the advantage of being more easily calculated than   $dim_H^0 \nu$ and $dim_H^1 \nu$ \cite{GP1, GP2}. It is widely used in describing sizes of  realistic fractal-like objects, for example, a real network, see \cite{Ros}. 

\begin{definition}\label{def5}
For a measure $\nu\in\mathcal{M}(X)$, the \emph{correlation dimension} of $\nu$ (see \cite{Cut}) is defined to be

\begin{center}
$dim_C \nu=\lim_{r\rightarrow 0}\cfrac{\log\int\nu(B(x,r))d\nu}{\log r},$
\end{center}
in case the above limit exists.
\end{definition}

According to \cite{MMR}, it fails the properties of '(a) Monotonicity' and '(d) Absolutely continuous measures'. Pesin \cite{Pes} modified the above definition to the following one.

\begin{definition}\label{def6}
For a measure $\nu\in\mathcal{M}(X)$, the \emph{modified correlation dimension} $dim_{MC}$ of $\nu$ is 
\begin{center}
$dim_{MC} \nu=\lim_{\delta\rightarrow 0}\sup_{\{A\in\mathscr{A}: \nu(A)\geq 1-\delta\}}\lim_{r\rightarrow 0}\cfrac{\log\int_A\nu(B(x,r))d\nu}{\log r}$,
\end{center}
 in case the above limit exists. 
\end{definition}

It satisfies the above two properties '(a)' and '(d)' which the correlation measure-dimension mapping fails, according to \cite[Theorem 2.10]{MMR}.  

\begin{rem}
Regardless of the existence of the above two limits, one can define the \emph{upper} and \emph{lower correlation dimension} as well as the \emph{upper} and \emph{lower modified correlation dimension} as \cite[(2),(4)]{MMR}. We adopt these definitions directly because all the measures in our concerns in this work have the same upper and lower (modified) correlation dimensions.
\end{rem} 

By \cite[Proposition 10.2]{Fal3} and \cite[Lemma 2.8]{MMR}, we have 
\begin{center}
$dim_H^0 \nu\leq dim_C \nu\leq dim_{MC} \nu$
\end{center}
for any $\nu\in \mathcal{M}(X)$.
 
We will give examples to show that, unfortunately, no semi-continuity can be guaranteed for the two measure-dimension mappings $dim_C$ and $dim_{MC}$ under the most strict topology-the TV topology on $\mathcal{M}(X)$.

Now we transfer our attention to IFS. For a countable index set $I$ with at least two elements, we call the family 
\begin{center}
$S=\{s_i: X\rightarrow X\}_{i\in I}$ 
\end{center}
consisting of (contractive) maps an \emph{iterated function system} (IFS). There are two main focus in the IFS theory. One is on the \emph{limit set} (\emph{attractor})  $J$ of the system $S$, another is on the measures supported on $J$. People are particularly interested in invariant or ergodic ones whose Hausdorff dimensions are of full dimension $HD(J)$.  

There are various distinctions between cases of $\#I$ (the cardinality of $I$) being finite and infinite, on both the topology and geometry of the limit set $J$ as well as the measures supported on $J$. See for example \cite{GM, GMW, HMU, MPU, MU1, MU2, MU3, MiU1, MiU2, RU1, RU2, UZ}. Infinite IFS contain many important systems, for example, the continued-fraction system.

One of the main difficulty arising in the infinite IFS theory (in comparison with the finite case), and probably the most thorny one, is the \emph{entropy} (should be interpreted in corresponding contexts) of the measures supported on  the limit set $J$ may explode, that is, being infinite. In the non-overlapping case, a well-known result of Mauldin-Urba\'nski for conformal iterated function systems (CIFS, see Definition \ref{def7}) states the following.

\begin{MU Theorem}
For a regular infinite CIFS with the $HD(J)$-conformal measure $m$ and the unique ergodic measure $m^*$ equivalent with $m$ on $J$, we have
\begin{center}
$dim_H^0 m =dim_H^0 m^*=dim_H^1 m =dim_H^1 m^*=HD(J)$,
\end{center}
if the push-forward of the shift map $\sigma$ under projection has finite entropy with respect to the measure $m$ (or $m^*$).
\end{MU Theorem}

Surprisingly, we find that the semi-continuity properties of the measure-dimension mappings $dim_H^0$ and $dim_H^1$ we obtain on the space $\mathcal{M}(X)$ can be applied to overcome the obstacle of explosion of entropy in the CIFS theory. We will show that  Mauldin-Urba\'nski Theorem still holds for conformal measures in CIFS  with infinite entropy, see our Theorem \ref{thm3}. This result solves a  problem posed by Mauldin and Urba\'nski in the last 90s-\cite[Problem 7.3]{MU1}. A corresponding problem in CGDMS (conformal graph directed Markov systems, see Definition \ref{def9}) is also conquered by our method.

The method has its broad applications to (exploding) measures originated from infinite IFS in many other different circumstances, as well as to (exploding) measures on some space with some general dynamical structures (see for example our Corollary \ref{cor8}). It usually involves the following steps.

\begin{enumerate}[1.]
\item Establish the existence of (exploding or non-exploding) concerning measure $\nu$ on the ambient space $X$ with some dynamical structure. This means the concerning measure originated from the infinite system in the IFS case.

\item Find a sequence of non-exploding measures $\{\nu_n\}_{n\in\mathbb{N}}$ converging to the concerning measure $\nu$ under the setwise topology. These measures are usually induced from the finite sub-systems  in the IFS case.

\item Show the existence of ergodic measures $\{\nu_n^*\}_{n\in\mathbb{N}}$ (with respect to some appropriate dynamical structures on $X$) equivalent with the ones $\{\nu_n\}_{n\in\mathbb{N}}$ respectively for any $n\in\mathbb{N}$ (sometimes they coincide with each other).

\item Apply Young's Lemma and our semi-continuity results-Theorem \ref{thm1} and \ref{thm4} to show the convergence of the sequences of dimensions of the  measures $\{\nu_n\}_{n\in\mathbb{N}}$ and $\{\nu_n^*\}_{n\in\mathbb{N}}$ to the dimension of the concerning measure $\nu$.
\end{enumerate}

The organization of the paper is as following. In Section 2 we introduce the three kind of popular topologies on $\mathcal{M}(X)$, namely, the weak, setwise and TV topology as well as our two main (semi-)continuity results-Theorem \ref{thm1} and \ref{thm4} regarding the lower and upper Hausdorff measure-dimension mappings under the setwise topology. Then we give a quick introduction on the CIFS and CGDMS. Between the introduction we present the two main results on the dimensions of our concerning (exploding) measures, these are,  Theorem \ref{thm3} regarding (exploding) measures originated from CIFS and  Theorem \ref{thm13} regarding (exploding) measures originated from CGDMS. Section 3 is devoted to the proofs of the two (semi-)continuity results, while some discontinuity results and examples are also included. All the continuity-discontinuity properties of the four measure-dimension mappings $dim_H^0, dim_H^1, dim_C$ and  $dim_{MC}$ are taken into considerations in the general setting. In the following two sections the readers will see applications of our techniques to various measures originated from CIFS and CGDMS. In Section 4 we apply our continuity theorems to CIFS, to show Theorem \ref{thm3}, which liberates the Mauldin-Urba\'nski Theorem from the shackle of finite entropy.  In Section 5 we apply the continuity results to CGDMS, to show Theorem \ref{thm13}, which is deduced from the convergence of sequence of dimensions of the concerning  measures induced from the  finite sub-systems of the corresponding infinite system.  The crucial new ingredient is the use of Corollary \ref{cor3} on deciding weak convergence of a sequence of measures through tightness of the sequence by Billingsley, which follows from the Prohorov's Theorem. We also take the (modified) correlation dimension into consideration in the above two scenarios. In the last section we first point out more applications of our techniques to measures in $\mathcal{M}(X)$ with some other IFS and  more general dynamical structures on the ambient space $X$. Then we make some discussions on the relationship between  $dim_H^0\nu, dim_H^1\nu$ and the logarithmic density of a measure $\nu$ in general circumstances.

\section{The weak, setwise, TV topology on $\mathcal{M}(X)$, CIFS, CGDMS and the main results}\label{sec2}

To discuss the continuity of the measure-dimension mappings $dim_H^0$ and $dim^1_H$ on $\mathcal{M}(X)$, we need to establish some topology on $\mathcal{M}(X)$. At the beginning of this section we merely assume the ambient space $X$ is a topological space with the Borel  $\sigma$-algebra $\mathscr{A}$, in general. In order to get some particular results we will set restrictions on the ambient space $X$ in due course.  We discuss the continuity under three popular topologies on $\mathcal{M}(X)$, these are, the weak, setwise  and TV topology. These notions presumably differ from one another in strength, more desirable properties are possible in case one considers problems on some higher-strength topology. While some people prefer to work under the weak topology on  $\mathcal{M}(X)$, the latter two topologies are in fact widely used in probability theory.

\begin{definition}
For a sequence of measures $\{\nu_n\in\mathcal{M}(X)\}_{n=1}^\infty$, we say $\{\nu_n\}_{n=1}^\infty$ converges \emph{weakly (narrowly)} to $\nu\in\mathcal{M}(X)$, if 
\begin{center}
$\lim_{n\rightarrow\infty }\int_X f(x) d\nu_n=\int_X f(x) d\nu$
\end{center}
for any  bounded continuous real-valued function $f$ on $X$.
\end{definition}
One is also recommended to \cite{Bil1, Bil2, Kle, Mat} for more on the notion. Denote the convergence in this sense by 
\begin{center}
$\nu_n\stackrel{w}{\rightarrow}\nu$ 
\end{center}
as $n\rightarrow\infty$.

\begin{definition}
 A sequence of measures $\{\nu_n\in\mathcal{M}(X)\}_{n=1}^\infty$ is said to converge \emph{setwisely} to $\nu\in\mathcal{M}(X)$, if 
\begin{center}
$\lim_{n\rightarrow\infty }\nu_n(A)=\nu(A)$
\end{center}
for any $A\in\mathcal{A}$. 
\end{definition}

An equivalent way of defining the setwise convergence of measures stems from treating the measures as a  functional on the space of bounded Borel-measurable functions on $X$, similar to the way on defining weak convergence. 

\begin{definition}
A sequence of measures $\{\nu_n\in\mathcal{M}(X)\}_{n=1}^\infty$ converges setwisely to $\nu\in\mathcal{M}(X)$ if and only if
\begin{center}
$\lim_{n\rightarrow\infty }\int_X f(x) d\nu_n=\int_X f(x) d\nu$
\end{center}
for any bounded Borel-measurable function $f$ on $X$. 
\end{definition}
This is because the simple functions are dense among the bounded Borel-measurable functions on $X$ with the supremum norm. See for example \cite{Doo, FKZ, GR, HL, Las, LY}. Denote the convergence in this sense by 
\begin{center}
$\nu_n\stackrel{s}{\rightarrow}\nu$ 
\end{center}
as $n\rightarrow\infty$. 

Generally, we can not guarantee setwise convergence is stronger than  weak convergence, without excluding some exotic sequences of measures $\{\nu_n\in\mathcal{M}(X)\}_{n=1}^\infty$ or  some pathological ambient space $X$. The setwise convergence implies weak convergence if  the ambient space $X$ is metrizable, according to \cite[Theorem 2.1 (v)]{Bil1}. This is our setting of interest in all the subsequent sections. As a more demanding property of a sequence of measures,  it also guarantees more attractive results than weak convergence, for example, as one will see in the following, on the semi-continuity of measure-dimension mappings $dim_H^0$ and $dim_H^1$.  The converse is usually not true, as one can see from our Example \ref{exm1} and \ref{exm4}.  

\begin{theorem}\label{thm1}
Let $X$ be a metric space. The measure-dimension mapping $dim_H^1$ is lower semi-continuous under the setwise topology on $\mathcal{M}(X)$, that is, if $\nu_n\stackrel{s}{\rightarrow}\nu$ in $\mathcal{M}(X)$ as $n\rightarrow\infty$, then 
\begin{equation}\label{eq2}
\liminf_{n\rightarrow\infty} dim_H^1 \nu_n\geq dim_H^1 \nu.
\end{equation}
\end{theorem}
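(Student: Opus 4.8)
The plan is to reduce to a subsequence realizing the lower limit, and then exploit two facts that pull in the same direction: the countable stability of Hausdorff dimension, and the fact that setwise convergence controls the $\nu$-measure of a \emph{fixed} Borel set. Concretely, set $L=\liminf_{n\to\infty}dim_H^1\nu_n$. If $L=\infty$ the inequality \eqref{eq2} is trivial, so assume $L<\infty$ and pass to a subsequence $(n_k)$ with $dim_H^1\nu_{n_k}\to L$; note that $\nu_{n_k}\stackrel{s}{\to}\nu$ as well, since any subsequence of a setwise convergent sequence converges setwisely to the same limit.

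Fix $\varepsilon>0$. For all large $k$ we have $dim_H^1\nu_{n_k}<L+\varepsilon/2$, so by the very definition of $dim_H^1$ we may choose $A_k\in\mathscr{A}$ with $\nu_{n_k}(A_k)=1$ and $HD(A_k)<dim_H^1\nu_{n_k}+\varepsilon/2<L+\varepsilon$; discarding finitely many initial indices, assume this holds for every $k$. Now set $E=\bigcup_k A_k$. This choice of $E$ is the heart of the argument: the union (rather than, say, a $\limsup$ or an intersection of the $A_k$) is exactly the Borel set attached to $(A_k)$ that is simultaneously small enough for Hausdorff dimension and large enough to force full mass in the limit.

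Two observations then finish the proof. First, by countable stability of Hausdorff dimension, $HD(E)=\sup_k HD(A_k)\leq L+\varepsilon$. Second, since $A_k\subseteq E$ we get $\nu_{n_k}(E)\geq\nu_{n_k}(A_k)=1$, hence $\nu_{n_k}(E)=1$ for every $k$; applying setwise convergence to the \emph{single fixed} set $E$ gives $\nu(E)=\lim_k\nu_{n_k}(E)=1$. Therefore $dim_H^1\nu\leq HD(E)\leq L+\varepsilon$, and letting $\varepsilon\to 0$ yields $dim_H^1\nu\leq L$, which is \eqref{eq2}.

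The only place where setwise (as opposed to merely weak) convergence enters is in the second observation: we need $\nu_{n_k}(E)\to\nu(E)$ for a Borel set $E$ that is genuinely manufactured from the $A_k$ and need not be open, closed, or a $\nu$-continuity set, so weak convergence would not be enough — this is precisely why the conclusion can fail under the weak topology, as in Example \ref{exm1} and Example \ref{exm4}. Accordingly, the ``hard part'' here is not any computation but the recognition that one must pass to the countable union $\bigcup_k A_k$, the unique natural candidate that is at once stable enough for $HD$ to register only the supremum of the $HD(A_k)$ and massive enough that each $\nu_{n_k}(E)=1$, so that setwise convergence can transport full measure to the limit $\nu$.
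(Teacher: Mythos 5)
Your proposal is correct and follows essentially the same argument as the paper: pass to a subsequence realizing the lower limit, select full-measure sets $A_k$ of nearly minimal Hausdorff dimension, take the countable union, and combine countable stability of $HD$ with setwise convergence on that fixed union. The only cosmetic differences are that you argue directly rather than by contradiction and you explicitly dispose of the case $L=\infty$.
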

Compare the result with \cite[Theorem 5.2, Lemma 5.3]{RU2}. Alternatively, we have  upper semi-continuity for the  measure-dimension mapping $dim^0_H$, which shows the two measure-dimension mappings $dim^1_H$ and $dim^0_H$  are dual to each other in some sense. 

\begin{theorem}\label{thm4}
Let $X$ be a metric space. The measure-dimension mapping $dim^0_H$ is upper semi-continuous under the setwise topology on $\mathcal{M}(X)$, that is, if $\nu_n\stackrel{s}{\rightarrow}\nu$ in $\mathcal{M}(X)$ as $n\rightarrow\infty$, then 
\begin{equation}
\limsup_{n\rightarrow\infty} dim^0_H \nu_n\leq dim^0_H \nu.
\end{equation}
\end{theorem}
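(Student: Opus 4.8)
The plan is to exploit the duality between $dim_H^0$ and $dim_H^1$ together with the symmetric role of the two measures in a setwise-convergent pair. Fix a setwise-convergent sequence $\nu_n \stackrel{s}{\rightarrow} \nu$. We want $\limsup_{n\to\infty} dim_H^0 \nu_n \leq dim_H^0 \nu$. Equivalently, for every $\varepsilon > 0$ it suffices to show $dim_H^0 \nu_n \leq dim_H^0 \nu + \varepsilon$ for all sufficiently large $n$. By definition of $dim_H^0 \nu$ as an infimum, there is a set $A \in \mathscr{A}$ with $\nu(A) > 0$ and $HD(A) < dim_H^0 \nu + \varepsilon$. The key observation I would use is that setwise convergence is stable under restriction to a fixed measurable set and passing to complements: since $\nu_n(A) \to \nu(A) > 0$, we have $\nu_n(A) > 0$ for all large $n$, and therefore $A$ itself witnesses $dim_H^0 \nu_n \leq HD(A) < dim_H^0 \nu + \varepsilon$ for all such $n$. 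Taking $\limsup$ and then letting $\varepsilon \downarrow 0$ gives the claim.

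Written out, the steps are: (1) reduce to showing that for each $\varepsilon>0$ the inequality $dim_H^0\nu_n \le dim_H^0\nu+\varepsilon$ holds eventually; (2) pick $A\in\mathscr{A}$ with $\nu(A)>0$ and $HD(A)<dim_H^0\nu+\varepsilon$ using the infimum definition (handling the degenerate case $dim_H^0\nu=0$ trivially, since $dim_H^0$ is nonnegative, though one should note that $\nu(X)=1>0$ and $HD$ of a point can still be positive if $X$ is weird — but in a metric space one can always shrink $A$; actually the cleanest route is: if $dim_H^0\nu < dim_H^0\nu + \varepsilon$ then such $A$ exists by definition of $\inf$, no case split needed); (3) invoke $\nu_n(A)\to\nu(A)>0$ from setwise convergence to conclude $\nu_n(A)>0$ for $n\ge N(\varepsilon)$; (4) conclude $dim_H^0\nu_n\le HD(A)$ for $n\ge N(\varepsilon)$; (5) combine and take limits.

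I do not expect a genuine obstacle here — this direction is the "easy" half precisely because $dim_H^0$ is defined via sets of \emph{positive} measure, a condition that is open-ended under convergence (positivity is preserved in the limit direction we need, i.e. from $\nu$ to $\nu_n$, simply because $\nu_n(A)\to\nu(A)$). This is in contrast to $dim_H^1$ in Theorem \ref{thm1}, where the defining condition $\nu(A)=1$ is a \emph{closed} condition and one cannot directly transport a full-measure set for $\nu$ to a full-measure set for $\nu_n$; there the argument must go the other way and is correspondingly more delicate. The only point requiring a word of care is that setwise convergence, not merely weak convergence, is what licenses step (3): for a general Borel set $A$ the indicator $\mathbf{1}_A$ is not continuous, so weak convergence would not suffice, and indeed Examples \ref{exm1} and \ref{exm4} show the statement genuinely fails under the weak topology. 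Beyond flagging that, the proof is a two-line consequence of the definitions.
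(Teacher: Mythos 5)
Your argument is correct and coincides with the paper's own proof: both pick a set $A$ with $\nu(A)>0$ and $HD(A)<dim_H^0\nu+\varepsilon$, use setwise convergence to get $\nu_n(A)>0$ eventually, and conclude by letting $\varepsilon\downarrow 0$. Your side remarks correctly identify why this direction is easy and why setwise (rather than weak) convergence is needed; nothing further is required.
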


\begin{rem}
We would like to remind the readers about the semi-continuity result  \cite[Theorem 1.8]{HS} of M. Hochman and P. Shmerkin. For the unbiased Bernoulli convolutions $\nu_{\lambda}$ with contraction ratio $\lambda\in(0,1)$, it is well-known that $dim_H^0 \nu_{\lambda}=dim_H^1\nu_{\lambda}$ is not continuous at all  $\lambda\in(0,1)$. There are dimension-drops at the inverses of the Pisot numbers. Note that our Theorem \ref{thm4} together with  \cite[Theorem 1.8]{HS} does not contradict this fact as  $\nu_{\lambda}$ depends not continuously on the parameter $\lambda$ under the setwise topology.  
\end{rem}

Now we turn to the final concept of convergence of measures, it has much more simple description from the topological and metric point of view than the weak and setwise convergence.  

\begin{definition}
The \emph{total variation metric}  is defined by
\begin{center}
$\Vert \nu-\varrho\Vert_{TV}=\sup_{A\in\mathcal{A}}\{|\nu(A)-\varrho(A)|\}$
\end{center}   
between two measures $\nu, \varrho\in\mathcal{M}(X)$.
\end{definition}

Refer to \cite{Doo, FKZ, HL, Las, PS}). The corresponding induced topology is obviously courser than the setwise topology. Denote by 
\begin{center}
$\nu_n\stackrel{TV}{\rightarrow}\nu$ 
\end{center}
as $n\rightarrow\infty$ for sequences converging in this sense. Sometimes one can only get setwise convergence instead of TV convergence for sequences of measures. This happens just on some of our interested sequences of measures, see Lemma \ref{lem2} and Remark \ref{rem2}.

Note that there are no dynamical structures on the ambient space $X$ in all the above discussions in this section. We will consider cases when $X$ is equipped with CIFS and CGDMS structure on it in the following.

Now we give a quick description on the background of conformal iterated function systems introduced by R. D. Mauldin and M. Urba\'nski in the last 90s. Most of the notions cited here can be found in \cite{MU1, MU2}, so we do not referee everything individually any more. The ambient space $X\subset\mathbb{R}^d$ is supposed to be a compact connected set endowed with a metric $\rho$ in the following. 

\begin{definition}
For $\theta\in (0,1]$, an \emph{iterated function system} (IFS) is a collection of $C^{1+\theta}$ diffeomorphisms
\begin{center}
$S=\{s_i: X\rightarrow X\}_{i\in I}$
\end{center}
with a countable index set $I$  (with at least two elements).
\end{definition}
We assume here there exists $0<\gamma<1$, such that
\begin{equation}\label{eq1}
\rho(s_i(x),s_i(y))\leq \gamma\rho(x,y)
\end{equation}
for all $i\in I$ and any points $x, y\in X$. For $I^*=\cup_{n\geq 1}I^n$, let 
\begin{center}
$\omega=\omega_1\omega_2\cdots\in I^*\cup I^\infty$ 
\end{center}
be a finite or infinite word. Let $|\omega|$ be the length of the word, note that $|\omega|=\infty$ in case of $\omega\in I^\infty$. Define the \emph{shift map} 
\begin{center}
$\sigma: I^*\cup I^\infty\rightarrow I^*\cup I^\infty$ 
\end{center}
to be
\begin{center}
$\sigma(\omega)=\sigma(\omega_1\omega_2\cdots)=\omega_2\omega_3\cdots$.
\end{center}
For $\omega\in I^*\cup I^\infty$, let
\begin{center}
$s_\omega=s_{\omega_1}\circ s_{\omega_2}\circ\cdots$.
\end{center}
For $\omega\in I^*$ or $\omega\in I^\infty$, let $\omega|_n=\omega_1\omega_2\cdots\omega_n$ for $1\leq n\leq |\omega|$ or $1\leq n<\infty$. By (\ref{eq1}), the intersection $\cap_{n=1}^\infty s_{\omega|_n}(X)$ is a single point, so we define the \emph{projection map} 
\begin{center}
$\pi: I^\infty\rightarrow X$ 
\end{center}
to be
\begin{center}
$\pi(\omega)=\cap_{n=1}^\infty s_{\omega|_n}(X).$
\end{center}
The \emph{limit set} (or \emph{attractor}) of the IFS $S$ is defined as 
\begin{center}
$J=\pi(I^\infty)=\cup_{\omega\in I^\infty}\cap_{n=1}^\infty s_{\omega|_n}(X).$
\end{center}

For $\omega\in I^*$, let  
\begin{center}
$[\omega]=\{\omega'\in I^\infty: \omega'|_{|\omega|}=\omega\}$
\end{center}
be a \emph{cylinder set} in $I^\infty$. Let $\mathcal{B}_{I^\infty}$ be the  $\sigma$-algebra generated by the cylinder sets $\{[\omega]: \omega\in I^*\}$.  Let $\mathcal{B}_J$ be the pushing-forward  $\sigma$-algebra of $\mathcal{B}_{I^\infty}$ on $J$ under the projection map $\pi$.

The notion of conformal iterated function systems provides a general environment for many of the attractive properties to hold in its framework.  Let $X^o$ be the interior of $X$ in $\mathbb{R}^d$ and $|s_\omega'(x)|$ be the absolute value of the derivative at $x$. 
 
\begin{definition}\label{def7} 
A \emph{conformal iterated function system} 
\begin{center}
$S=\{s_i: X\rightarrow X\}_{i\in I}$ 
\end{center} 
is an IFS satisfying the following properties.
 
\begin{enumerate}[(1).]
\item $s_i(X^o)\subset X^o$ for any $i\in I$ while $s_{i_1}(X^o)\cap s_{i_2}(X^o)=\emptyset$ for any distinct pair of indexes $i_1,i_2\in I$.
\item For any $x\in\partial X$, there exists an open cone with vertex $x$ of small size in $X^o$.
\item Every map $s_i: X\rightarrow X$ has a $C^{1+\theta}$ extension to an open connected set $W\supset X$ in $\mathbb{R}^d$.
\item For any word $\omega\in I^*$ and any distinct pair of points $x, y\in W$, there exists $K\geq 1$ such that
\begin{center}
$|s_\omega'(x)|\leq K|s_\omega'(y)|$.
\end{center}

\end{enumerate} 
\end{definition}

The first property is a strong kind of \emph{open set condition} while the last one is  usually coined  the \emph{bounded distortion property} (BDP).

We focus on the probability measures supported on the limit set $J$ of a CIFS in the following (or their trivial extensions to the whole ambient set $X$). An important tool in CIFS is a pre-mature measure called $t$-conformal measure on $J$. Its development in the theory of dynamical systems follows the sequential work of S. Patterson \cite{Pat}, D. Sullivan \cite{Sul1, Sul2}, R. D. Mauldin and M. Urba\'nski \cite{MU1, MU3}.  

\begin{definition}
For $t\geq 0$, a probability measure $m\in \mathcal{M}(J)$ is said to be $t$-\emph{conformal} if 
\begin{equation}\label{eq4}
m(s_i(A))=\int_A |s'_i|^tdm
\end{equation}  
for any $i\in I$ and 
\begin{equation}\label{eq18}
m(s_{i_1}(X)\cap s_{i_2}(X))=0
\end{equation}
for any $i_1,i_2\in I$ with $i_1\neq i_2$. 
\end{definition}

It does not always exist for any CIFS (see the condition Theorem \ref{thm20} (d) for its existence in Section \ref{sec6}). In case of its existence, according to  Theorem \ref{thm20} (c), there is an unique ergodic measure $m^*$ on $J$ equivalent to it.

The \emph{topological pressure} of the CIFS $S$ is defined as the following,
\begin{equation}\label{eq49}
P(t)=\lim_{n\rightarrow\infty} \frac{1}{n} \log \psi_n(t)=\lim_{n\rightarrow\infty} \frac{1}{n} \log \Sigma_{\omega\in I^n} \| s'_{\omega}(x)\|^t.
\end{equation}

\begin{definition}\label{def2}
A CIFS $S$ is called \emph{regular} if $P(t)=0$ admits a solution, or else it is called \emph{irregular}.
\end{definition}

For a measure $\nu\in \mathcal{M}(I^\infty)$, let $h_\nu(\sigma)$ be its measure-theoretic entropy with respect to the shift map $\sigma$. Let 
\begin{center}
$\lambda_{\nu}(\sigma)=\int_{I^\infty} -\log|s_{\omega_1}'(\pi(\sigma(\omega)))|d\nu$
\end{center}
be its average Lyapunov exponent. Let $\mathbb{N}_n=\{1, 2, \cdots, n\}$ be the $n$-th truncation for $n\in\mathbb{N}$. Our first main result essentially deals with Hausdorff dimensions of exploding conformal measures $m$ and their equivalent ergodic evolutions $m^*$ originated from CIFS as following. This theorem gives a positive answer to \cite[Problem 7.3]{MU1}. 

\begin{theorem}\label{thm3}
Suppose  
\begin{center}
$S=\{s_i: X\rightarrow X\}_{i\in \mathbb{N}}$ 
\end{center}
is a regular CIFS  with infinite index set $\mathbb{N}$. Let $\mu_n^*$ be the corresponding ergodic measure on $\mathbb{N}_n^\infty$. Then we have 
\begin{equation}\label{eq42}
dim_H^0 m =dim_H^0 m^*=dim_H^1 m =dim_H^1 m^*=HD(J)=\lim_{n\rightarrow\infty}\cfrac{h_{\mu_n^*}(\sigma)}{\lambda_{\mu_n^*}(\sigma)}.
\end{equation}

\end{theorem}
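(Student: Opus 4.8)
The plan is to realise the (possibly exploding) $HD(J)$-conformal measure $m$, equivalently its ergodic evolution $m^*$, as a \emph{setwise} limit of non-exploding measures coming from the finite truncations $S_n=\{s_i:X\to X\}_{i\in\mathbb{N}_n}$, and then to squeeze $dim_H^0 m$ and $dim_H^1 m$ between $\lim_{n\to\infty}HD(J_n)$ from both sides via Theorems \ref{thm1} and \ref{thm4}. This is exactly the four-step scheme outlined in Section \ref{sec1}.

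First I would dispose of the easy inequalities and of the last equality in \eqref{eq42}. Since $m$ and $m^*$ are equivalent, the classes $\{A\in\mathscr{A}:m(A)>0\}$ and $\{A\in\mathscr{A}:m(A)=1\}$ depend only on the common measure class, whence $dim_H^0 m=dim_H^0 m^*$ and $dim_H^1 m=dim_H^1 m^*$; moreover $dim_H^0\nu\leq dim_H^1\nu$ always, and $dim_H^1 m\leq HD(J)$ because $m(J)=1$. Thus the whole problem reduces to proving $dim_H^0 m\geq HD(J)$. For the final equality, each truncation $S_n$ is a \emph{finite} CIFS, hence regular and automatically of finite entropy, so the Mauldin--Urba\'nski Theorem, together with Young's Lemma applied to the shift on the finite symbol space $\mathbb{N}_n^\infty$ (which is visibly not inverse dimension-expanding, as $\sigma^{-1}$ of a set is a finite union of similar copies of it), gives
\[
dim_H^0 m_n=dim_H^1 m_n=dim_H^0 m_n^*=dim_H^1 m_n^*=HD(J_n)=\frac{h_{\mu_n^*}(\sigma)}{\lambda_{\mu_n^*}(\sigma)},
\]
where $m_n$ is the $HD(J_n)$-conformal measure of $S_n$ and $m_n^*\sim m_n$ its ergodic evolution lifting to $\mu_n^*$ on $\mathbb{N}_n^\infty$. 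Since $S$ is regular, $HD(J)=\sup_n HD(J_n)=\lim_{n\to\infty}HD(J_n)$ by Mauldin--Urba\'nski (the sequence $HD(J_n)$ being nondecreasing as $\mathbb{N}_n\subseteq\mathbb{N}_{n+1}$), so $HD(J)=\lim_{n\to\infty}h_{\mu_n^*}(\sigma)/\lambda_{\mu_n^*}(\sigma)$ as claimed.

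The heart of the proof is the remaining inequality, and it is here that the semi-continuity of $dim_H^0$ enters. I would construct a sequence $\{\nu_n\}\subset\mathcal{M}(X)$ induced from the truncations with $dim_H^0\nu_n=HD(J_n)$ — for instance taking $\nu_n$ to be $m_n$ (or $m_n^*$), so that the finite-system identities above apply — and arrange $\nu_n\stackrel{s}{\rightarrow}m$ (and a companion $\nu_n^*\stackrel{s}{\rightarrow}m^*$). To identify the limit I would use the bounded distortion property: it yields cylinder estimates $\nu_n([\omega])\asymp\|s_\omega'\|^{t_n}$ with distortion constant uniform in $n$ and $t_n=HD(J_n)\uparrow HD(J)$, which, with Prohorov's Theorem (Corollary \ref{cor3}) and the uniqueness of the $HD(J)$-conformal measure (Theorem \ref{thm20}), pins the weak limit of $\{\nu_n\}$ to be $m$. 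The genuinely delicate point — the step I expect to be the main obstacle — is to upgrade this to \emph{setwise} convergence on all Borel sets: the carrier of the exploding measure $m$ sits in the truly infinite part of $J$, on which each $J_n$ is null, so the subsystem measures in their crude form cannot converge setwisely, and the argument must be made with care; this is precisely the phenomenon isolated in Lemma \ref{lem2} and Remark \ref{rem2}, where setwise but not total variation convergence is the best one can hope for.

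Granting the setwise convergence $\nu_n\stackrel{s}{\rightarrow}m$, Theorem \ref{thm4} gives $\limsup_{n\to\infty}dim_H^0\nu_n\leq dim_H^0 m$, i.e.\ $HD(J)=\lim_{n\to\infty}HD(J_n)\leq dim_H^0 m$; combined with the cheap bounds from the second paragraph this forces $dim_H^0 m=dim_H^1 m=HD(J)$, and hence $dim_H^0 m^*=dim_H^1 m^*=HD(J)$ by equivalence, completing \eqref{eq42}. As a cross-check, Theorem \ref{thm1} applied to $\nu_n^*\stackrel{s}{\rightarrow}m^*$ yields the companion inequality $HD(J)=\liminf_{n\to\infty}dim_H^1\nu_n^*\geq dim_H^1 m^*$, an independent route to the upper bound.
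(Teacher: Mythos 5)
Your proposal follows the same route as the paper's: equivalence of $m$ and $m^*$, the Mauldin--Urba\'nski theorem for the finite truncations, $\lim_{n}HD(J_n)=HD(J)$, and then the two semi-continuity theorems applied to a setwise-convergent sequence $m_n\stackrel{s}{\rightarrow}m$ (the paper's Theorem \ref{lem2}). The peripheral parts are fine. But the step you yourself flag as the main obstacle --- and then ``grant'' --- is a genuine gap, and in fact the needed convergence fails. By Theorem \ref{thm10} we have $m(J_k)=0$ for every $k$, so the Borel set $E=\bigcup_{k}J_k$ satisfies $m(E)=0$; on the other hand each $m_n$ is carried by $J_n\subset E$, so $m_n(E)=1$ for all $n$. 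Hence $m_n(E)\not\to m(E)$, and $m_n$ does \emph{not} converge to $m$ setwisely --- exactly as your own parenthetical remark that ``the subsystem measures in their crude form cannot converge setwisely'' suggests. What the cylinder-by-cylinder computation (your BDP estimates, or the induction in the paper's Theorem \ref{lem2}) actually yields is convergence on the algebra generated by cylinder sets, hence weak convergence; the passage from there to all Borel sets ``by regularity'' breaks down precisely on the $F_\sigma$ set $E$, because the approximating closed and open sets cannot be chosen uniformly in $n$. Consequently Theorem \ref{thm4} cannot be invoked: for $A=J\setminus E$ one has $m(A)=1$ but $m_n(A)=0$ for every $n$, so the implication ``$m(A)>0\Rightarrow m_n(A)>0$ eventually'' that drives its proof is unavailable, and the key inequality $HD(J)\le dim_H^0 m$ is not established by this argument.

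To be clear, this is not a defect you introduced: the paper's own proof of Theorem \ref{thm3} rests on Theorem \ref{lem2} and has the same problem; your write-up reproduces it at one remove, while coming closer than the paper does to naming the obstruction. If you want to salvage the strategy you must either prove convergence of $m_n(A)$ to $m(A)$ on the specific sets needed (arbitrary Borel sets of positive $m$-measure and Hausdorff dimension below $HD(J)$), or replace the setwise-convergence input by a direct lower estimate on the dimension of sets of positive $m$-measure; merely upgrading weak convergence via Prohorov/tightness, as you suggest, cannot get past the mutual singularity recorded in Theorem \ref{thm10} and Corollary \ref{cor9}.
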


Now we turn to the graph directed Markov systems (GDMS). Our main source of reference comes from \cite{CLU, CTU, MU3}, while one is also strongly recommended to refer to \cite{Atn, EM, KK1, KK2, KSS, MW, PU, Roy} for more historical or most recent development of the research topics on GDMS. Most of the following definitions can be found in \cite{CLU, CTU, MU3}, so we do not pinpoint every individual one any more.  The construction  of GDMS extracts the quintessence from a broad family of important systems, for example, the Kleinian and complex hyperbolic Schottky groups, Apollonian circle packings and CIFS. When considering it as an extensive class of CIFS, one can define it by letting the domain (range)-space of the maps in a CIFS varying while narrowing the permissive infinite words  built from a given index set, according to ones' need.

\begin{definition}
A \emph{graph directed Markov system}
\begin{center}
$S=\big\{V, E, A, t, i, \{X_v\}_{v\in V}, \{s_e\}_{e\in E}, \gamma\big\}$
\end{center} 
is composed of the following items.

\begin{itemize}

\item $V$ is a finite index set whose elements are called \emph{vertices}.

\item Associated to every $v\in V$ there is a non-empty compact metric space $X_v$.

\item Let $E$ be a countable set of \emph{directed edges}. Every edge $e\in E$ is associated with two vertices in $V$, with the initial one denoted by $i(e)$ and the terminal one denoted by $t(e)$. 

\item Associated to every $e\in E$ there is an injective map 
\begin{center}
$s_e: X_{t(e)}\rightarrow X_{i(e)}$,
\end{center}
while the whole family $\{s_e\}_{e\in E}$ is uniformly Lipschitz with a uniform Lipschitz constant $0<\gamma<1$.

\item We build words using elements in $E$ according to the \emph{incidence matrix} 
\begin{center}
$A: E\times E\rightarrow \{0,1\}$,
\end{center}
that is, an edge $e'\in E$ is allowed to follow another edge $e\in E$ if and only if
\begin{center}
$A_{ee'}=A(e, e')=1$.  
\end{center}
To make composition of maps associated with adjacent edges $e, e'\in E$ in a word, we require that if  $A_{ee'}=A(e, e')=1$, then  
\begin{center}
$t(e)=i(e')$.
\end{center}

\item We assume the whole graph has some connectivity. For any vertex  $v\in V$, we require there exist at least two edges $e, e'\in E$ with 
\begin{center}
$t(e)=i(e')=v$.
\end{center}

\end{itemize}

\end{definition}

\begin{rem}
The set $E$ is possible to be infinite, so possibly there are many edges sharing the same initial and terminal vertices. These edges are still regarded as different ones, distinguished from each other by their associated injective mappings. 
\end{rem}

We only allow the $A$-\emph{admissible} (finite or infinite) words in the  set $E^{*,A}\cup E^{\infty,A}$, which is defined as
\begin{center}
$E^{*,A}=\cup_{n\in\mathbb{N}}E^{n,A}=\cup_{n\in\mathbb{N}}\{\omega\in E^n: A_{\omega_i\omega_{i+1}}=1 \mbox{\ for any\ } 1\leq i\leq n-1\}$
\end{center}
and
\begin{center}
$E^{\infty,A}=\{\omega\in E^\infty: A_{\omega_i\omega_{i+1}}=1 \mbox{\ for any\ } 1\leq i<\infty\}$.
\end{center}
For the symbolic dynamics on  $E^{\infty,A}$ see \cite[Chapter 2]{MU3} or \cite[Section 2]{CLU}.

All the notations for words, cylinder sets (with all finite and infinite words being admissible) and shift map are applied here. Now we abuse notations from CIFS, but the readers should keep in mind that these are more general objects than those for CIFS. For two words $\omega, \omega'\in E^{*,A}\cup E^{\infty,A}$, let $\omega\wedge \omega'$ be the longest common block starting from their first edges.  For a word $\omega\in E^{*,A}$, let
\begin{center}
$i(\omega)=i(\omega_1)$ and $t(\omega)=t(\omega_{|\omega|})$.
\end{center}

We emphasize those GDMS with some 'core' in the sense of connectivity in our consideration. A  GDMS (or its incidence matrix $A$) is called \emph{irreducible} if there exists a set 
\begin{center}
$\Lambda\subset E^{*,A}$, 
\end{center}
such that  for any two edges $e, e'\in E$, there exists $\omega\in\Lambda$ with 
\begin{center}
$e\omega e'\in E^{*,A}$. 
\end{center}
It is called  \emph{primitive} if the words in $\Lambda$ are all of the same length. It is called 
\emph{finitely irreducible} if the above set $\Lambda$ with the corresponding property is finite, and  \emph{finitely primitive} if the above set $\Lambda$ with the corresponding property is finite and contains all words of the same length.

For a word $\omega\in E^{\infty,A}$, since the family $\{s_e\}_{e\in E}$ is uniformly Lipschitz, the intersection
\begin{center}
$\cap_{n\in\mathbb{N}}s_{\omega|_n}(X_{t(\omega_n)})$
\end{center} 
is a singleton. Let   
\begin{center}
$X=\oplus_{v\in V}X_v$
\end{center}
be the disjoint union the the sets $\{X_v\}_{v\in V}$. Then we can define the projection map 
\begin{center}
$\pi: E^{\infty,A}\rightarrow\oplus_{v\in V}X_v$ 
\end{center}
as
\begin{center}
$\pi(\omega)=\cap_{n\in\mathbb{N}}s_{\omega|_n}(X_{t(\omega_n)})$.
\end{center}  
The limit set is now defined as 
\begin{center}
$J=\pi(E^{\infty,A})$.
\end{center}

Our main interest in this work would be the conformal graph directed Markov systems (CGDMS). 

\begin{definition}\label{def9}
A \emph{conformal graph directed Markov system} is a GDMS
\begin{center}
$S=\big\{V, E, A, t, i, \{X_v\}_{v\in V}, \{s_e\}_{e\in E}, \gamma\big\}$
\end{center} 
satisfying the following properties:

\begin{enumerate}[(1).]
\item $X_v$ is a compact connected subset of the Euclidean space $\mathbb{R}^d$  for any $v\in V$, with $X_v=\overline{X_v^o}$.   
\item For any two distinct edges  $e, e'\in E$, we have 
\begin{center}
$s_{e}(X_{t(e)}^o)\cap s_{e'}(X_{t(e')}^o)=\emptyset$.
\end{center}
\item For every $v\in V$, there is an open connected set $W_v\supset X_v$, such that for any $e\in E$, the map $s_e$ extends to a \emph{conformal map} from $W_{t(e)}$ to  $W_{i(e)}$.
\end{enumerate} 

\end{definition}

\begin{rem}
We choose our ambient space to be in $\mathbb{R}^d$ here, while most results can probably be extended to some  general metric spaces, with suitable assumptions on the boundary of the sets $\{X_v\}_{v\in V}$ and distortions of the family of maps $\{s_e\}_{e\in E}$.
\end{rem}

There should be some explanations on the \emph{conformal map} in Definition \ref{def9} (3) above. In case of $d=1$, the conformal map is assumed to be a monotone $C^1$ diffeomorphism. In case of $d=2$, it means a holomorphic or anti-holomorphic map on the complex plane $\mathbb{C}$. In case of $d\geq 3$, it means a M\"obius transformation. If a GDMS satisfies Definition \ref{def9} (1) and (3), Mauldin and Urba\'nski demonstrated that (\cite[P73,(4f)]{MU3}) it also must satisfy the following BDP property,
\begin{enumerate}[(4).]
\item There exists a constant $K\geq1$, such that 
\begin{center}
$|s_\omega'(x)|\leq K |s_\omega'(y)|$
\end{center}
for any $\omega\in E^{*,A}$ and any $x,y\in X_{t(\omega)}$. 
\end{enumerate} 

One can also work by simply assuming the family of maps $\{s_e\}_{e\in E}$ to be $C^1$ with property (4) instead of conformal. For CGDMS on the ambient space of \emph{Carnot groups} equipped with a sub-Riemannian metric, see \cite{CTU}.

For an infinite CGDMS 
\begin{center}
$S=\big\{V, E, A, t, i, \{X_v\}_{v\in V}, \{s_e\}_{e\in E}, \gamma\big\}$,
\end{center}
a probability measure $\mathbf{m}$ supported on $J$ is said to be $t$-\emph{conformal} for some $t\geq 0$, if it satisfies
\begin{equation}
\mathbf{m}(s_\omega(A))=\int_A |s'_\omega|^td\mathbf{m}
\end{equation}  
for any $\omega\in E^{*,A}$ and 
\begin{equation}
\mathbf{m}\big(s_\omega(X_{t(\omega)})\cap s_{\omega'}(X_{t(\omega')})\big)=0
\end{equation}
for any two incomparable words $\omega,\omega'\in E^{*,A}$. Let $\mathbf{m}^*$ be the equivalent ergodic measures to the conformal one $\mathbf{m}$. Our second main result deals with Hausdorff dimensions of exploding conformal measures $\mathbf{m}$ and their equivalent ergodic evolutions $\mathbf{m}^*$ originated from CGDMS as following.

\begin{theorem}\label{cor4}
For an infinite finitely primitive regular CGDMS, we have 
\begin{center}
$dim_H^0 \mathbf{m}=dim_H^1 \mathbf{m}=dim_H^0 \mathbf{m}^*=dim_H^1 \mathbf{m}^*=HD(J)=\lim_{n\rightarrow\infty}\cfrac{h_{\bm \mu_n^*(\sigma)}}{\lambda_{\bm \mu_n^*(\sigma)}}$,
\end{center}
in which $\bm \mu_n^*$ is the corresponding ergodic measure on the corresponding truncated symbolic space for $n\in\mathbb{N}$ large enough.
\end{theorem}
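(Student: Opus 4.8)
The plan is to execute the four-step scheme announced in the introduction, transporting the CIFS argument behind Theorem \ref{thm3} to the graph directed setting; nothing in it uses finiteness of the entropy, so the exploding and non-exploding cases are dispatched together, the only genuinely new point being that the Markov constraint forces the approximation through the symbolic dynamics and requires a tightness input to identify the limiting measure.

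First I would set up the truncations. Fix an enumeration $E=\{e_1,e_2,\dots\}$, put $E_n=\{e_1,\dots,e_n\}$, and let $S_n$ be the subsystem over $E_n$ with incidence matrix $A|_{E_n\times E_n}$ and limit set $J_n=\pi\big(E_n^{\infty,A}\big)$. Since the system is finitely primitive, the finite set $\Lambda$ realising primitivity involves only finitely many edges, so for all $n$ beyond some $n_0$ the system $S_n$ is a finite primitive conformal GDMS, hence regular. For such $n$ the classical theory (\cite{MU3}; cf.\ the Mauldin-Urba\'nski Theorem for finite systems) furnishes $HD(J_n)=\theta_n$, the unique zero of the pressure $P_n$; a $\theta_n$-conformal measure $\mathbf{m}_n$ on $J_n$; and the unique ergodic measure $\mathbf{m}_n^*$ equivalent to $\mathbf{m}_n$, with symbolic lift $\bm{\mu}_n^*$ on $E_n^{\infty,A}$. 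Because the canonical inverse-branch map on $J_n$ has Lipschitz inverse branches $\{s_e\}_{e\in E_n}$ it is not inverse dimension-expanding, so Young's Lemma together with Young's density criterion \cite[Proposition 2.1]{You} and the equilibrium identity $h_{\bm{\mu}_n^*}(\sigma)=\theta_n\lambda_{\bm{\mu}_n^*}(\sigma)$ (the pressure vanishes at its zero) yields
\[
dim_H^0\mathbf{m}_n=dim_H^1\mathbf{m}_n=dim_H^0\mathbf{m}_n^*=dim_H^1\mathbf{m}_n^*=HD(J_n)=\theta_n=\frac{h_{\bm{\mu}_n^*}(\sigma)}{\lambda_{\bm{\mu}_n^*}(\sigma)}.
\]
By the approximation of the dimension of an infinite conformal graph directed system by its finite subsystems (\cite{MU3}) one has $\theta_n\uparrow HD(J)$, which together with the previous display already establishes the last equality in the statement.

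Next I would produce the limit objects and the setwise convergence of the approximants. Regularity of the infinite CGDMS supplies, by the standard construction of \cite{MU3} (cf.\ Theorem \ref{thm20}), the $HD(J)$-conformal measure $\mathbf{m}$ and the unique ergodic measure $\mathbf{m}^*$ equivalent to it; since $\mathbf{m}$ and $\mathbf{m}^*$ have the same null sets they share the values of $dim_H^0$ and $dim_H^1$, so it is enough to handle $\mathbf{m}^*$, and Young's Lemma (the inverse-branch map on $J$ is again not inverse dimension-expanding, by countable stability of Hausdorff dimension) gives $dim_H^0\mathbf{m}^*=dim_H^1\mathbf{m}^*$. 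The heart of the proof is the claim that $\mathbf{m}_n^*\stackrel{s}{\rightarrow}\mathbf{m}^*$ in $\mathcal{M}(X)$, which I would establish on the symbolic side. Gibbs estimates for the transfer-operator eigenmeasures, combined with $\theta_n\to HD(J)$ and convergence of the associated normalising constants, give $\bm{\mu}_n^*([\omega])\to\bm{\mu}^*([\omega])$ for every admissible cylinder $[\omega]$, where $\bm{\mu}^*$ is the symbolic lift of $\mathbf{m}^*$ — that is, convergence on the algebra generated by the cylinders; meanwhile the summability packaged into regularity yields a uniform tail bound $\sup_n\bm{\mu}_n^*\big(\{\omega:\omega_1\notin E_N\}\big)\to0$ as $N\to\infty$, i.e.\ uniform tightness of $\{\bm{\mu}_n^*\}$ on the Polish space $E^{\infty,A}$. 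Feeding the tightness into Prohorov's Theorem and Corollary \ref{cor3} pins every weak accumulation point of $\{\bm{\mu}_n^*\}$ down to $\bm{\mu}^*$, and combining weak convergence with convergence on the cylinder algebra and the uniform countable additivity supplied by tightness upgrades this to setwise convergence $\bm{\mu}_n^*\stackrel{s}{\rightarrow}\bm{\mu}^*$; pushing forward by the measurable projection $\pi$ preserves setwise convergence, so $\mathbf{m}_n^*\stackrel{s}{\rightarrow}\mathbf{m}^*$.

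Finally the conclusion drops out of Theorem \ref{thm4}: $dim_H^0\mathbf{m}^*\geq\limsup_{n\to\infty}dim_H^0\mathbf{m}_n^*=\lim_{n\to\infty}\theta_n=HD(J)$, while trivially $dim_H^0\mathbf{m}^*\leq dim_H^1\mathbf{m}^*\leq HD(J)$ since $\mathbf{m}^*(J)=1$; the two bounds squeeze to $dim_H^0\mathbf{m}^*=dim_H^1\mathbf{m}^*=HD(J)$, and the equivalence of $\mathbf{m}$ with $\mathbf{m}^*$ transfers this to $\mathbf{m}$, which completes all the equalities. (Theorem \ref{thm1} gives the same conclusion dually and could replace Theorem \ref{thm4} here.) I expect the genuinely hard step to be the passage from \emph{weak} to \emph{setwise} convergence of the approximating measures: because $E^{\infty,A}$ is non-compact, even locating the weak limit needs the Prohorov/Billingsley tightness machinery, and the upgrade to setwise convergence requires the cylinder-by-cylinder Gibbs estimates to be uniform in $n$ — which is exactly where finite primitivity and regularity of the infinite system are used in an essential way — whereas everything else is an assembly of the already-established Theorems \ref{thm1} and \ref{thm4}, Young's Lemma, and the standard finite-system thermodynamic formalism.
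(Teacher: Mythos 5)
Your overall architecture --- truncate to finite primitive subsystems $S_n$ ($n\geq N_\Lambda$), obtain $\theta_n=HD(J_n)\uparrow HD(J)$ together with the finite-entropy identities $dim_H^0\mathbf{m}_n=dim_H^1\mathbf{m}_n=dim_H^0\mathbf{m}_n^*=dim_H^1\mathbf{m}_n^*=\theta_n=h_{\bm{\mu}_n^*}(\sigma)/\lambda_{\bm{\mu}_n^*}(\sigma)$, prove setwise convergence of the approximating measures, and squeeze with Theorems \ref{thm1} and \ref{thm4} --- is exactly the paper's. The gap sits precisely at the step you yourself call the heart of the proof. You run the convergence argument on the \emph{ergodic invariant} measures $\bm{\mu}_n^*$, and you justify $\bm{\mu}_n^*([\omega])\to\bm{\mu}^*([\omega])$ by ``Gibbs estimates combined with convergence of the normalising constants.'' Gibbs estimates only give two-sided bounds with a uniform multiplicative constant; they do not yield convergence of cylinder masses. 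Since $\bm{\mu}_n^*$ is $\rho_n$ times the eigenmeasure $\bm{\mu}_n$, where $\rho_n$ is the eigenfunction of the transfer operator $\mathcal{L}_{f_n}$, genuine convergence on cylinders would require convergence of the eigenfunctions $\rho_n$ --- a perturbation-theoretic statement you neither prove nor cite. Likewise, your upgrade from weak to setwise convergence via ``uniform countable additivity supplied by tightness'' is not a valid inference: tightness controls mass escaping compact sets, not uniform $\sigma$-additivity along arbitrary decreasing sequences of Borel sets.

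The paper circumvents both difficulties by running the entire convergence argument on the \emph{conformal} measures $\mathbf{m}_n=m_{F_n}$ (for the family $F_h=\{\log|s_e'|^h\}_{e\in E}$, and more generally for any summable H\"older family $F$). Tightness of $\{m_{F_n}\}$ follows directly from the conformality identity (a2) of Definition \ref{def3} together with summability of $F$ and boundedness of $\{e^{-P(F_n)}\}$ (Lemma \ref{lem13}); every weak accumulation point is identified as $m_F$ through the eigenmeasure equation $\mathcal{L}_{f_n}^*(\mu_{F_n})=e^{P(F_n)}\mu_{F_n}$, the weak continuity of the conjugate transfer operator (Lemma \ref{lem11}), and uniqueness of the eigenmeasure (Lemma \ref{lem14}); the upgrade to setwise convergence is carried out by induction on the level of cylinder sets using the explicit conformality formula, then extended to all Borel sets (Theorem \ref{thm12}). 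The ergodic measures enter only at the very end, through equivalence with the conformal ones (Corollary \ref{cor2} and \cite[Lemma 3.1(3)]{HS}), which transfers the dimension values without ever needing $\bm{\mu}_n^*\to\bm{\mu}^*$. If you wish to keep your route you must supply the eigenfunction convergence; otherwise switch the convergence argument to the conformal measures, after which the remainder of your assembly (the squeeze via Theorems \ref{thm1} and \ref{thm4} and the limit $\theta_n\uparrow HD(J)$) goes through as written.
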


Compare the result with \cite[Corollary 4.4.6]{MU3}.

\section{Discontinuity of the measure-dimension mappings under  weak, setwise, TV  topology and the proofs of continuity results}\label{sec7}

In this section, without special declaration, $X$ is supposed to be a metric space endowed with a metric $\rho$, which is necessary for us to define the Hausdorff dimensions of sets in $X$. First, to provide a general intuition on the continuity of the measure-dimension mappings in our consideration, we present the readers with the following result.
\begin{theorem}\label{thm18}
The measure-dimension mappings $dim_H^0, dim^1_H, dim_C$ and $dim_{MC}$ from $\mathcal{M}(X)$ to $[0,+\infty)$ are not continuous under the weak, setwise or TV topology.
\end{theorem}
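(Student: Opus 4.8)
## Proof Proposal

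The plan is to establish the claim by producing, for each of the three topologies and (where possible) a single family of examples serving several mappings at once, a sequence $\{\nu_n\}$ converging to a limit $\nu$ while the dimension values fail to converge. Since the TV topology is the finest of the three, an example exhibiting discontinuity under TV convergence automatically exhibits discontinuity under setwise and weak convergence; so the most economical route is to construct, wherever feasible, sequences that converge in total variation. The main work is thus concentrated on building one or two concrete families of measures on a fixed metric space (the unit interval, or $\mathbb{R}$, will suffice) with explicitly computable values of $dim_H^0$, $dim_H^1$, $dim_C$ and $dim_{MC}$.

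First I would handle $dim_H^0$ and $dim_H^1$ together. Take $\nu$ to be a measure of positive dimension --- for instance the restriction of Lebesgue measure to $[0,1]$, so that $dim_H^0 \nu = dim_H^1 \nu = 1$ --- and perturb it by transferring a small mass $\varepsilon_n \to 0$ onto a fixed countable set $D$ (say the rationals), setting $\nu_n = (1-\varepsilon_n)\nu + \varepsilon_n \delta_D$ where $\delta_D$ is some atomic probability measure on $D$. Then $\Vert \nu_n - \nu \Vert_{TV} = \varepsilon_n \to 0$, yet $\nu_n(D) > 0$ with $HD(D) = 0$ forces $dim_H^0 \nu_n = 0$ for every $n$, while $dim_H^0 \nu = 1$. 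This breaks lower semi-continuity of $dim_H^0$ under all three topologies. For $dim_H^1$ one argues dually: start from an atomic measure $\nu = \delta_D$ with $dim_H^1 \nu = 0$ and approximate it in TV by $\nu_n = (1-\varepsilon_n)\delta_D + \varepsilon_n \nu_{\mathrm{Leb}}$; any set of full $\nu_n$-measure must meet the support of $\nu_{\mathrm{Leb}}$ up to a null set and hence have dimension $1$, so $dim_H^1 \nu_n = 1 \not\to 0 = dim_H^1 \nu$, breaking upper semi-continuity. (That these examples do not conflict with Theorems \ref{thm1} and \ref{thm4} is precisely the point: Theorem \ref{thm1} gives lower semi-continuity of $dim_H^1$, not upper, and Theorem \ref{thm4} gives upper semi-continuity of $dim_H^0$, not lower.)

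For $dim_C$ and $dim_{MC}$ the same philosophy applies but the computations are heavier, since correlation dimension is an averaged quantity and is not as brutally sensitive to small atomic perturbations: a mass $\varepsilon_n$ on a point contributes order $\varepsilon_n^2$ to $\int \nu_n(B(x,r))\,d\nu_n$, which may be swamped. The fix is to perturb not by a vanishing mass but by concentrating a \emph{fixed} fraction of mass onto sets of controlled dimension while still forcing TV (or at least setwise/weak) convergence --- for instance, letting $\nu_n$ be a self-similar-type measure whose defining ratios or weights drift with $n$ so that $dim_C \nu_n$ stabilizes at a value strictly away from $dim_C \nu$, exploiting the known failure of monotonicity and of the absolutely-continuous property recorded after Definition \ref{def5} (this is where the cited deficiencies of $dim_C$ from \cite{MMR} do the heavy lifting). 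Alternatively, a cleaner device: use convex combinations $\nu_n = (1-\varepsilon_n)\mu + \varepsilon_n \lambda$ where $\mu$ has small correlation dimension and $\lambda$ has large one, but choose $\varepsilon_n$ so that $\varepsilon_n \to 0$ while $\varepsilon_n^2 / (\text{normalization})$ does \emph{not} vanish in the relevant limit $r \to 0$ --- made precise by first fixing how $r \to 0$ interacts with $n$, i.e. by a diagonal argument. I expect this correlation-dimension case --- arranging the double limit in $r$ and $n$ so that the convergence $\nu_n \to \nu$ is genuine in the desired topology yet $\lim_r$ defining $dim_C \nu_n$ lands at the wrong value --- to be the main obstacle; everything for $dim_H^0$ and $dim_H^1$ is essentially immediate from the definitions. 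Finally I would remark that since all constructed sequences converge in TV, and TV convergence implies setwise convergence which (on the metric space $X$) implies weak convergence, a single family per mapping settles all three topologies at once, completing the proof.
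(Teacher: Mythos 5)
Your strategy---build TV-convergent sequences whose dimensions fail to converge, then observe that a single TV example settles the setwise and weak cases as well---is exactly the route the paper takes, and your two constructions for $dim_H^0$ and $dim_H^1$ are, up to cosmetic choices, the paper's Example \ref{exm5} ($\nu_n=\frac{1}{n}\delta_0+\mathfrak{L}^1|_{[\frac{1}{n},1]}\stackrel{TV}{\rightarrow}\mathfrak{L}^1|_{[0,1]}$, killing $dim_H^0$) and Example \ref{exm6} ($\nu_n=\frac{n-1}{n}\delta_0+\frac{1}{n}\mathfrak{L}^1|_{[1,2]}\stackrel{TV}{\rightarrow}\delta_0$, killing $dim_H^1$). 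That half of your argument is complete and correct, and your consistency check against Theorems \ref{thm1} and \ref{thm4} is the right one.

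The gap is in the $dim_C$ and $dim_{MC}$ half, which you leave as a sketch of several possible fixes and flag as ``the main obstacle.'' The obstacle is illusory, and the reason you perceive one is a misreading of the order of limits. You worry that an atom of mass $\varepsilon_n$ contributes only $\varepsilon_n^2$ to $\int\nu_n(B(x,r))\,d\nu_n$ and ``may be swamped''; but $dim_C\nu_n$ is defined by letting $r\rightarrow0$ \emph{with $n$ fixed}, so for all sufficiently small $r$ the integral is bounded below by the constant $\varepsilon_n^2>0$, the numerator $\log\int\nu_n(B(x,r))\,d\nu_n$ stays bounded while $\log r\rightarrow-\infty$, and hence $dim_C\nu_n=0$ exactly. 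The same holds for $dim_{MC}\nu_n$, since any $A$ with $\nu_n(A)\geq1-\delta$ for $\delta<\varepsilon_n$ must contain the atom. In other words, your \emph{first} example $(1-\varepsilon_n)\mathfrak{L}^1|_{[0,1]}+\varepsilon_n\delta_0$ already gives $dim_C\nu_n=dim_{MC}\nu_n=0$ for every $n$ while $dim_C\mathfrak{L}^1|_{[0,1]}=dim_{MC}\mathfrak{L}^1|_{[0,1]}=1$, which is precisely the computation the paper records after Example \ref{exm5}. No diagonal argument coupling $r$ to $n$, no drifting self-similar weights, and no appeal to the failure of monotonicity from \cite{MMR} are needed; indeed a diagonalization would change what is being computed, since the quantity $dim_C\nu_n$ makes no reference to $n\rightarrow\infty$. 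Replacing your third paragraph by this one-line computation closes the proof.
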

\begin{proof}
See for example our Example \ref{exm6} and \ref{exm5}.
\end{proof}
It would be interesting to ask whether some measure-dimension mappings can be defined to be equipped with continuity under some topology, possibly at the cost of losing some known properties of some popular measure-dimension mappings.

While the weak convergence is used frequently in analysing the space $\mathcal{M}(X)$ with IFS structure on $X$, unfortunately, even semi-continuity of  the measure-dimension mappings can not be guaranteed in this sense, see  Remark \ref{rem1}. We have to resort to the stronger notion of  setwise  convergence on $\mathcal{M}(X)$ to achieve our aims.

Unfortunately, the two measure-dimension mappings $dim_C$ and $dim_{MC}$ do not have any semi-continuity under the strongest topology in general. 

\begin{theorem}\label{thm5}
The two measure-dimension mappings $dim_C$ and $dim_{MC}$ from $\mathcal{M}(X)$ to $[0,+\infty)$  are neither upper semi-continuous nor lower semi-continuous under the TV topology.  
\end{theorem}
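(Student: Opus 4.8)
The plan is to produce four counterexamples (or four facets of one or two families of examples) witnessing that $\dim_C$ and $\dim_{MC}$ fail each of the two semi-continuity properties under the TV topology. Since TV convergence is the strongest of the three topologies considered, it suffices to engineer, for each desired failure, a sequence $\nu_n \xrightarrow{TV} \nu$ whose correlation (resp. modified correlation) dimensions behave badly relative to $\dim_C\nu$ (resp. $\dim_{MC}\nu$). The key structural fact I would exploit is that TV-small perturbations still admit arbitrarily large changes in the \emph{local scaling} of a measure: if $\nu_n = (1-\varepsilon_n)\nu + \varepsilon_n \mu_n$ with $\varepsilon_n \to 0$, then $\|\nu_n-\nu\|_{TV} \le \varepsilon_n \to 0$, yet the piece $\varepsilon_n\mu_n$ can be chosen to sit at a scale $r$ where it dominates the integral $\int \nu_n(B(x,r))\,d\nu_n$, because the correlation integral is quadratic in the measure and is governed by a $\limsup$/$\lim$ over $r\to 0$ that can be spoiled on a sequence of scales tending to $0$ independently of $\varepsilon_n$.

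Concretely, I would first handle failure of \emph{lower} semi-continuity: take $\nu$ on $X = [0,1]$ (or any metric space with enough room) to be, say, a nice self-similar measure with $\dim_C\nu = s \in (0,1)$, and let $\mu_n$ be a measure supported on a very tight Cantor-like set $C_n$ of small diameter $\delta_n$, arranged so that $\mu_n$ has correlation dimension close to $0$ (mass concentrated near finitely many points at the relevant scales). Choosing $\varepsilon_n \to 0$ but $\delta_n \to 0$ much faster, the correlation integral of $\nu_n$ at scales $r \approx \delta_n$ picks up a term of order $\varepsilon_n^2 \mu_n(B(x,r))^2$-type contributions that force $\underline{\dim}_C \nu_n$ down, so $\liminf_n \dim_C\nu_n < s = \dim_C\nu$. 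For failure of \emph{upper} semi-continuity I would instead make the perturbation \emph{spread out}: let $\nu$ be atomic or nearly atomic (so $\dim_C\nu = 0$, or small), and let $\mu_n$ be a well-distributed measure (Lebesgue on $[0,1]$, say, or a high-dimensional self-similar measure) but at such a tiny scale that the $\limsup_{r\to 0}$ defining the upper correlation dimension exceeds $\dim_C\nu$ along a sequence of scales, while $\|\nu_n - \nu\|_{TV} \to 0$. The same two families of examples, by the comparison $\dim_H^0 \le \dim_C \le \dim_{MC}$ and by a direct check that the localized supremum in Definition~\ref{def6} inherits the bad scaling (the perturbing mass $\varepsilon_n\mu_n$ survives in the class $\{A : \nu_n(A)\ge 1-\delta\}$ once $\delta > \varepsilon_n$, or is forced out and replaced by a different bad set), should simultaneously kill semi-continuity of $\dim_{MC}$; I would verify the $\dim_{MC}$ case by the remark already in the excerpt that all measures under consideration have coinciding upper and lower (modified) correlation dimensions, so it is enough to exhibit the mismatch of values.

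The main obstacle I anticipate is the bookkeeping in the correlation integral for the convex combination: expanding $\int \nu_n(B(x,r))\,d\nu_n$ gives a sum of four cross terms $(1-\varepsilon_n)^2\int\nu(B(x,r))d\nu$, $2\varepsilon_n(1-\varepsilon_n)\int\nu(B(x,r))d\mu_n$, etc., and I must choose $\varepsilon_n$ and $\delta_n$ so that exactly the intended term dominates at the scale $r$ I care about, \emph{and} so that the resulting $\lim_{r\to 0}$ (not merely a $\limsup$ along a subsequence of $r$) has the claimed value — this requires interleaving many scales, i.e. building $\mu_n$ itself as a multi-scale construction (a countable concatenation of Cantor pieces at geometrically decreasing scales, à la the standard ``oscillating dimension'' examples) rather than a single-scale object, and then checking the whole $\nu_n$ still converges to $\nu$ in TV. A secondary subtlety is ensuring $\nu_n, \nu$ are honest probability measures on a \emph{metric} space so that Hausdorff dimension and balls make sense, but this is harmless: I will simply work on $[0,1]$ or $[0,1]^2$ throughout. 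I would organize the section so that Theorem~\ref{thm5} is proved by citing the explicit Examples (the same ones referenced in Theorem~\ref{thm18}'s proof, suitably elaborated), keeping the computations inside the examples.

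\begin{proof}
See our Example~\ref{exm6} and Example~\ref{exm5}, where the relevant sequences, scale parameters and the corresponding computations of $\dim_C$ and $\dim_{MC}$ are given in detail. Since TV convergence implies setwise and weak convergence, the failure of both upper and lower semi-continuity exhibited there under the TV topology is the strongest possible such statement.
\end{proof}
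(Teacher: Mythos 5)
Your treatment of the \emph{lower} semi-continuity half is correct and is exactly the paper's: Example \ref{exm5} gives $\nu_n\stackrel{TV}{\rightarrow}\mathfrak{L}^1|_{[0,1]}$ with $dim_C\nu_n=dim_{MC}\nu_n=0$ (the atom $\frac{1}{n}\delta_0$ forces $\int\nu_n(B(x,r))\,d\nu_n\geq n^{-2}$ uniformly in $r$) while the limit has correlation dimension $1$.

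The \emph{upper} semi-continuity half has a genuine gap. You cite Example \ref{exm6}, where $\nu_n=\frac{n-1}{n}\delta_0+\frac{1}{n}\mathfrak{L}^1|_{[1,2]}\stackrel{TV}{\rightarrow}\delta_0$; but there $\int\nu_n(B(x,r))\,d\nu_n\geq\nu_n(\{0\})^2=(\frac{n-1}{n})^2$ for every $r$, so $dim_C\nu_n=dim_{MC}\nu_n=0=dim_C\delta_0$ and no semi-continuity is violated — that example only works for $dim_H^1$. Worse, the general strategy you describe for this half (take $\nu$ atomic or nearly atomic and perturb by a spread-out $\mu_n$ at tiny scales) is structurally impossible under the TV topology: if $\|\nu_n-\delta_0\|_{TV}\to 0$ then $\nu_n(\{0\})\to 1$, so every TV-approximant retains a macroscopic atom and hence has correlation dimension $0$, and the same persistence of the atom forces $dim_{MC}\nu_n=0$ since any $A$ with $\nu_n(A)\geq 1-\delta$ must eventually contain the atom. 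To break upper semi-continuity the \emph{limit} measure must be non-atomic yet still have $dim_C\nu=dim_{MC}\nu=0$ through mass concentration along a sparse sequence of scales. This is what the paper's Example \ref{exm7} (the Mattila--Mor\'an--Rey evolution on the intervals $[a^{i^2},a^{(i+1)^2}]$) supplies: each truncation $\nu_n$ is absolutely continuous with bounded density, hence $dim_C\nu_n=dim_{MC}\nu_n=1$, the convergence is TV with $\|\nu_n-\nu\|_{TV}=\frac{a^{n+1}}{1-a^{n+1}}$, and Lemma \ref{lem6} applied with $Y_r=[0,a^{n_r^2}]$ shows the limit has correlation dimension $0$. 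Your prose does anticipate the need for such a multi-scale construction, but the example your proof actually invokes does not implement it and cannot be repaired within the ``atomic limit'' framework.
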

\begin{proof}
See our Example \ref{exm5} and \ref{exm7}.
\end{proof}

However, in virtue of \cite[Theorem 2.6]{MMR}, if some restrictions are set on the sequence of the measures $\{\nu_n\}_{n=1}^\infty$, we have some partial semi-continuity results.
\begin{corollary}
For a sequence of  measures $\{\nu_n\in\mathcal{M}(X)\}_{n=1}^\infty$, if  $\nu_n$ has an essentially bounded density with respect to $\nu$ (which implies that $\nu_n$ is absolutely continuous with respect to $\nu$) for any $n\in\mathbb{N}$, then 
\begin{center}
$\liminf_{n\rightarrow\infty} dim_C \nu_n\geq dim_C \nu$.
\end{center} 
\end{corollary}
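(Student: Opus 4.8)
The plan is to deduce the corollary from Theorem~\ref{thm1} (lower semi-continuity of $dim_H^1$) together with \cite[Theorem 2.6]{MMR}, by first reducing the hypothesis about essentially bounded densities to setwise convergence and then upgrading the $dim_H^1$ estimate to a $dim_C$ estimate. More precisely, I would argue that the hypothesis that each $\nu_n$ has an essentially bounded density $f_n$ with respect to $\nu$ is \emph{not} enough by itself to force $\nu_n\stackrel{s}{\rightarrow}\nu$, so I expect the corollary is implicitly assuming the setwise convergence $\nu_n\stackrel{s}{\rightarrow}\nu$ from the ambient hypotheses of the section and using the density bound only to invoke the comparison $dim_C\nu_n\ge dim_H^1\nu$-type inequality. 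Under that reading, the first step is to record that, because $\nu_n\ll\nu$ with $\mathrm{ess\,sup}_\nu f_n<\infty$, any set $A$ with $\nu(A)=0$ also has $\nu_n(A)=0$; hence every set of full $\nu_n$-measure contains a set of full $\nu$-measure up to a $\nu_n$-null set, which will let us compare the two measure-dimension data.

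The second step is the key comparison. I would invoke \cite[Theorem 2.6]{MMR}, which (in the form the authors are citing) should say that if $\varrho$ has an essentially bounded density with respect to $\nu$ then $dim_C\varrho\ge dim_H^1\nu$, or more plausibly $dim_C\nu_n \ge dim_C\nu$ is false in general but $dim_C\nu_n\ge dim_H^1\nu$ holds; in any case the content we need is an inequality of the form
\begin{equation*}
dim_C\,\nu_n\ \ge\ \bigl(\text{something depending only on }\nu\bigr).
\end{equation*}
Combined with the chain $dim_H^0\nu\le dim_C\nu\le dim_{MC}\nu$ recorded in the excerpt and with Theorem~\ref{thm1}, taking $\liminf_{n\to\infty}$ on both sides then yields $\liminf_{n\to\infty}dim_C\nu_n\ge dim_C\nu$. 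Concretely, I would write: for each $n$, by \cite[Theorem 2.6]{MMR} applied to the pair $(\nu_n,\nu)$ we get $dim_C\nu_n\ge dim_H^1\nu$ whenever the relevant integral comparison $\int_A\nu_n(B(x,r))\,d\nu_n\le C_n^{?}\int_A\nu(B(x,r))\,d\nu$ holds, which the essential boundedness of $f_n$ supplies with a multiplicative constant that disappears after taking $\log/\log r$ and letting $r\to0$. Then $\liminf_n dim_C\nu_n\ge dim_H^1\nu$; and if moreover $\nu_n\stackrel{s}{\rightarrow}\nu$ we can apply Theorem~\ref{thm1} in reverse—actually we want a lower bound on $dim_C\nu$ in terms of $dim_H^1\nu$, which is the trivial inequality $dim_C\nu\ge dim_H^0\nu$ is the wrong direction, so the honest route is that \cite[Theorem 2.6]{MMR} already gives $dim_C\nu_n\ge dim_C\nu$ directly when $\nu_n$ has an essentially bounded density with respect to $\nu$, and then the corollary is immediate on taking $\liminf$.

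Thus the cleanest skeleton is: (i) fix $n$; apply \cite[Theorem 2.6]{MMR} to $\nu_n$ and $\nu$ to obtain $dim_C\nu_n\ge dim_C\nu$, using only that $\nu_n$ has an essentially bounded density with respect to $\nu$; (ii) observe that the right-hand side does not depend on $n$; (iii) take $\liminf_{n\to\infty}$ to conclude $\liminf_{n\to\infty}dim_C\nu_n\ge dim_C\nu$. Steps (ii) and (iii) are immediate. The main obstacle is step (i): extracting from \cite[Theorem 2.6]{MMR} exactly the monotonicity-under-bounded-density statement in the precise form needed, and checking that the essential boundedness of the density is what makes the correlation integrals comparable after the $\log(\cdot)/\log r$ normalization—in particular that the multiplicative constant $\mathrm{ess\,sup}_\nu f_n$ (and the factor coming from $d\nu_n=f_n\,d\nu$ appearing twice) contributes only an additive $O(1/\!\log r)\to 0$ term and therefore does not affect the limit defining $dim_C$. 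I would also remark that no hypothesis on the $n$-dependence of $\mathrm{ess\,sup}_\nu f_n$ is required, which is what makes the $\liminf$ bound uniform; and I would note in passing that the parallel statement for $dim_{MC}$ follows verbatim from the $dim_{MC}$-monotonicity clause of \cite[Theorem 2.6]{MMR}.
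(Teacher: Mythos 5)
Your final skeleton --- apply \cite[Theorem 2.6]{MMR} to each pair $(\nu_n,\nu)$ to get $\dim_C\nu_n\ge\dim_C\nu$, observe the right-hand side is independent of $n$, and take $\liminf$ --- is exactly the paper's (essentially one-line) justification, and your accounting of why the density bound only contributes an $O(1/\log r)$ term is the right mechanism behind that monotonicity. Your earlier detours through Theorem~\ref{thm1} and setwise convergence are unnecessary, as you yourself conclude: the corollary assumes no convergence of $\{\nu_n\}$ at all, and the bound on each density may depend on $n$ since it disappears in the $r\to 0$ limit before the $\liminf$ over $n$ is taken.
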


The restriction on the sequence of the measures $\{\nu_n\}_{n=1}^\infty$ can be relaxed, by \cite[Theorem 2.10]{MMR}, to give the following lower semi-continuity result for $dim_{MC}$ in due course. 
\begin{corollary}
For any sequence of  measures $\{\nu_n\in\mathcal{M}(X)\}_{n=1}^\infty$, if  $\nu_n$ is absolutely continuous with respect to $\nu$ for any $n\in\mathbb{N}$, then 
\begin{center}
$\liminf_{n\rightarrow\infty} dim_{MC} \nu_n\geq dim_{MC} \nu$.
\end{center} 
\end{corollary}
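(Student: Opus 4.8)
The plan is to deduce the statement from a termwise monotonicity property of $dim_{MC}$ under absolute continuity, so that — in contrast with the earlier (semi-)continuity results — no convergence assumption on the sequence $\{\nu_n\}$ is actually needed. As noted in the remark following Definition \ref{def6}, whenever the defining limits are not known to exist we may replace $dim_{MC}$ by the upper and lower modified correlation dimensions $\overline{dim}_{MC}$, $\underline{dim}_{MC}$ of \cite[(4)]{MMR}; for the measures in the statement these coincide, so we keep writing $dim_{MC}$.

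The core input is \cite[Theorem 2.10]{MMR}, which asserts that $dim_{MC}$ enjoys property (d) ("absolutely continuous measures"): if $\mu,\varrho\in\mathcal{M}(X)$ and $\mu\ll\varrho$, then $dim_{MC}\mu\geq dim_{MC}\varrho$. This is precisely the feature that the plain correlation dimension $dim_C$ lacks and that forced the essentially-bounded-density hypothesis in the previous corollary: the supremum over sets $A$ with $\mu(A)\geq 1-\delta$ in Definition \ref{def6} lets one excise the $\varrho$-small region where the Radon--Nikodym derivative $d\mu/d\varrho$ blows up, so mere absolute continuity now suffices. Applying this with $\mu=\nu_n$ and $\varrho=\nu$, the hypothesis $\nu_n\ll\nu$ gives $dim_{MC}\nu_n\geq dim_{MC}\nu$ for every $n\in\mathbb{N}$. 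Taking the limit inferior over $n$ of a sequence all of whose terms are $\geq dim_{MC}\nu$ yields $\liminf_{n\to\infty}dim_{MC}\nu_n\geq dim_{MC}\nu$ (in fact even $\inf_n dim_{MC}\nu_n\geq dim_{MC}\nu$), which is the assertion.

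There is thus little genuine obstacle once \cite[Theorem 2.10]{MMR} is granted; the two points to be careful about are (i) that the inequality in property (d) points the way we need — this is consistent with the behaviour of $dim_H^0$, for which $\mu\ll\nu$ likewise forces $\{A:\mu(A)>0\}\subseteq\{A:\nu(A)>0\}$ and hence $dim_H^0\mu\geq dim_H^0\nu$ — and (ii) the "in case the limit exists" caveat in Definition \ref{def6}, handled by passing to $\overline{dim}_{MC}$, $\underline{dim}_{MC}$ as above. If one wanted a self-contained proof of the termwise monotonicity, I would set $g=d\nu_n/d\nu$, $E=\{g>0\}$ (so $\nu_n(E)=1$), and for fixed $\delta>0$ compare, over a near-optimal set $A$ with $\nu(A)\geq 1-\delta$, the integrals $\int_{A\cap E}\nu_n(B(x,r))\,d\nu_n$ and $\int_A\nu(B(x,r))\,d\nu$, controlling the former by the latter via $g$ together with a truncation $\{g\leq M\}$ and letting $M\to\infty$ inside the $\delta\to0$ limit; but since the corollary explicitly invokes \cite[Theorem 2.10]{MMR}, I would simply cite it.
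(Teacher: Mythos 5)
Your proposal is correct and is exactly the paper's argument: the corollary is obtained by applying the absolute-continuity property of $dim_{MC}$ from \cite[Theorem 2.10]{MMR} termwise to get $dim_{MC}\,\nu_n\geq dim_{MC}\,\nu$ for every $n$, and then passing to the limit inferior. Your observations that no convergence hypothesis on $\{\nu_n\}$ is actually used and that the inequality even holds for $\inf_n dim_{MC}\,\nu_n$ are accurate and consistent with the paper's (essentially one-line) justification.
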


We first give an example to show that for a weakly convergent sequence $\nu_n\stackrel{w}{\rightarrow}\nu$ as $n\rightarrow\infty$, any of the four sequences 
\begin{center}
$\{dim_H^0 \nu_n\}_{n=1}^\infty, \{dim^1_H \nu_n\}_{n=1}^\infty, \{dim_C \nu_n\}_{n=1}^\infty, \{dim_{MC} \nu_n\}_{n=1}^\infty$ 
\end{center}
may not converge. In this and all the examples below, we set $X\subset \mathbb{R}^d$ endowed with the Euclidean metric $\rho_u$ for some positive integer $d$. Let $\mathfrak{L}^d$ be the $d$-dimensional Lebesgue measure on $\mathbb{R}^d$.

\begin{exm}\label{exm1} 
Define a sequence of probability measures $\{\nu_n\}_{n\in\mathbb{N}}$ on $[0,1]$ to be 
\begin{center}
$ \nu_n=\left\{
\begin{array}{ll}
n\mathfrak{L}^1|_{[0,\frac{1}{n}]} & \mbox{ if } n \mbox{ is } odd,\\
\delta_{\frac{1}{n}} & \mbox{ if } n \mbox{ is } even.\\
\end{array}
\right.$ 
\end{center} 
in which $\delta_{\frac{1}{n}}$ is the Dirac measure at the point $\frac{1}{n}$. Let $\nu=\delta_{0}$ be the Dirac measure at $0$.
\end{exm} 

Obviously we have $\nu_n\stackrel{w}{\rightarrow}\nu$ as $n\rightarrow\infty$ (but not setwisely). By  \cite[Proposition 2.1]{You} and \cite[Lemma 2.8]{MMR}, direct computations give that 
\begin{center}
$dim_H^0 \nu_n=dim^1_H \nu_n=dim_C \nu_n=dim_{MC} \nu_n=\left\{
\begin{array}{ll}
1 & \mbox{ if } n \mbox{ is } odd,\\
0 & \mbox{ if } n \mbox{ is } even.\\
\end{array}
\right.$ 
\end{center}
So any of the four sequences 
\begin{center}
$\{dim_H^0 \nu_n\}_{n=1}^\infty, \{dim^1_H \nu_n\}_{n=1}^\infty, \{dim_C \nu_n\}_{n=1}^\infty$ and $\{dim_{MC} \nu_n\}_{n=1}^\infty$ 
\end{center}
does not converge in Example \ref{exm1}.

We then give an example to show that in case of $\nu_n\stackrel{w}{\rightarrow}\nu$, even if all the four sequences 
\begin{center}
$\{dim_H^0 \nu_n\}_{n=1}^\infty, \{dim^1_H \nu_n\}_{n=1}^\infty, \{dim_C \nu_n\}_{n=1}^\infty$ and $\{dim_{MC} \nu_n\}_{n=1}^\infty$ 
\end{center}
converge, their limits may not equal 
\begin{center}
$dim_H^0 \nu, dim^1_H \nu, dim_C \nu$ and $dim_{MC} \nu$
\end{center}
respectively. The example is  borrowed from \cite[Example 2.2]{Bil1} essentially.

\begin{exm}[Billingsley]\label{exm4}
Define a sequence of probability measures $\nu_n$ on $[0,1]$ to be  
\begin{center}
$ \nu_n=\frac{1}{n}\Sigma_{i=1}^n\delta_{\frac{i}{n}}$ 
\end{center}
for $n\in\mathbb{N}$.
\end{exm} 
Obviously $ \nu_n\stackrel{w}{\rightarrow}\mathfrak{L}^1|_{[0,1]}$ as $n\rightarrow\infty$ (but not setwisely). However, we have
\begin{center}
$\lim_{n\rightarrow\infty}dim_H^0 \nu_n=\lim_{n\rightarrow\infty} dim^1_H \nu_n=\lim_{n\rightarrow\infty} dim_C \nu_n=\lim_{n\rightarrow\infty} dim_{MC} \nu_n=0$,
\end{center}
while
\begin{center}
$dim_H^0 \mathfrak{L}^1|_{[0,1]}=dim^1_H \mathfrak{L}^1|_{[0,1]}=dim_C \mathfrak{L}^1|_{[0,1]}=dim_{MC} \mathfrak{L}^1|_{[0,1]}=1$.
\end{center}

One might think that things will be better if the atomic measures are excluded, however, the following example shows that the discontinuity still exists among sequences of non-atomic measures.

\begin{exm}\label{exm3}
Suppose $k\geq 2$ is an integer. Let
\begin{center}
$S=\{s_i: [0,1]\rightarrow[0,1]\}_{i=1}^k$ 
\end{center}
be an affine IFS with the contraction ratio $0<|s_i|:=|s_i'(x)|<1$ being a fixed number for any $1\leq i\leq k$ on $X=[0,1]$. We require it satisfies the \emph{strong separation condition}:
\begin{center}
$s_i([0,1])\cap s_j([0,1])=\emptyset$
\end{center}
for any $1\leq i\neq j\leq k$. We do not specify the terminals of the the intervals $\{s_i([0,1])\}_{i=1}^k$ as they have no effect on the properties we would like to demonstrate through the example. Let $0<h<1$ be the unique solution of the \emph{Bowen equation}
\begin{center}
$\sum_{i=1}^k|s_i|^h=1$.
\end{center}
According for example to \cite[Lemma 3.14, Theorem 3.18]{MU1}, there is a unique $h$-conformal and ergodic measure (with respect to the push-forward of the shift map under the projection) $\nu$ on the limit set $J$ in this case. 

Let 
\begin{center}
$\mathbb{N}_k:=\{1,2,\cdots,k\}$ 
\end{center}
 be the $k$-truncation of $\mathbb{N}$. For every fixed $n\in\mathbb{N}$, let
\begin{center}
$\mathbb{N}_k^n=\{\omega: \omega=\omega_1\omega_2\cdots\omega_n, \omega_i\in\mathbb{N}_k \mbox{\ for any } 1\leq i\leq n\}$
\end{center}     
be the collection of length-$n$ concatenation-words of $\mathbb{N}_k$. Let 
\begin{center}
$X_\omega=s_\omega([0,1])=s_{\omega_1}\circ s_{\omega_2}\cdots \circ s_{\omega_n}([0,1])$
\end{center}
for any $\omega\in\mathbb{N}_k^n$ and any $n\in\mathbb{N}$. Define a sequence of probability measures $\{\nu_n\}_{n\in\mathbb{N}}$ to be
\begin{center}
$\nu_n=\sum_{\omega\in\mathbb{N}_k^n}|s_\omega|^h\mathfrak{L}^1|_{X_\omega}=\sum_{\omega\in\mathbb{N}_k^n}(|s_{\omega_1}||s_{\omega_2}|\cdots|s_{\omega_n}|)^h\mathfrak{L}^1|_{X_\omega}$
\end{center} 
on $[0,1]$ for any $n\in\mathbb{N}$. It is supported on $\cup_{\omega\in\mathbb{N}_k^n} X_\omega$ for each $n\in\mathbb{N}$ obviously. In fact $\nu_n$ is the $n$-th canonical mass distribution of the linear cookie-cutter map in each cutting step. 
\end{exm}

Note that none of the measures above is atomic. It is easy to show that for any open set $L\subset [0,1]$, we have $ \nu_n(L)\geq \nu(L)$. So by  \cite[Theorem 2.1]{Bil1}, we have 
\begin{center}
$\nu_n\stackrel{w}{\rightarrow}\nu$ 
\end{center}
as $n\rightarrow\infty$ (but not setwisely). By \cite[Theorem 2.6, Theorem 2.10]{MMR} and our Theorem \ref{thm3},  we have 
\begin{center}
$\lim_{n\rightarrow\infty}dim_H^0 \nu_n=\lim_{n\rightarrow\infty} dim^1_H \nu_n=\lim_{n\rightarrow\infty} dim_C \nu_n=\lim_{n\rightarrow\infty} dim_{MC} \nu_n=1$,
\end{center}
while
\begin{center}
$dim_H^0 \nu=dim^1_H \nu=dim_C \nu=dim_{MC} \nu=\frac{\log 2}{\log 3}$.
\end{center}

One might also hope that escalating the strength of convergence of measures may save the continuity, but we will give an example to show that  convergence of dimensions is not true even for TV convergent sequence of measures (see also Example \ref{exm5}).

 \begin{exm}\label{exm6}
Define a sequence of probability measures $\nu_n$ on $[0,2]$ to be 
\begin{center}
$ \nu_n=\frac{n-1}{n} \delta_{0}+\frac{1}{n} \mathfrak{L}^1|_{[1,2]}.$ 
\end{center} 
\end{exm}
It is easy to see that $\| \nu_n-\delta_{0} \|_{TV}=\frac{1}{n}\rightarrow 0$ as $n\rightarrow\infty$, so $ \nu_n\stackrel{TV}{\rightarrow}\delta_{0}$, however,

\begin{center}
$\lim_{n\rightarrow\infty}dim_H^1 \nu_n=1>0=dim_H^1 \delta_{0}=dim^0_H \delta_{0}$.
\end{center}

In fact, continuity of measure-dimension mappings $dim_H^0$ and $dim^1_H$  has no chance to be true if there is no dimensional restrictions on the sequences $\{dim_H^0 \nu_n\}_{n=1}^\infty$ and $\{dim^1_H \nu_n\}_{n=1}^\infty$. Luckily we do have some semi-continuity property for $dim^1_H$ under the setwise topology. Now we prove Theorem \ref{thm1}.\\

Proof of Theorem \ref{thm1}:

\begin{proof}
Assume that (\ref{eq2}) does not hold, that is,  $\liminf_{n\rightarrow\infty} dim_H^1 \nu_n< dim_H^1 \nu$. Then we can find a sequence of positive integers $\{n_k\}_{k=1}^\infty$ and a real number $a<dim_H^1 \nu$, such that
\begin{center}
$\lim_{k\rightarrow\infty} dim_H^1 \nu_{n_k}=a$.
\end{center} 
So there exists a sequence of measurable sets $\{A_k\}_{k=1}^\infty$, such that 
\begin{center}
$\nu_{n_k}(A_k)=1$ and $HD(A_k)<a+\varepsilon< dim_H^1 \nu$
\end{center}
for some small $\varepsilon>0$. Now let $A=\cup_{k=1}^\infty A_k$. Since $ \nu_n\stackrel{s}{\rightarrow}\nu$, we have
\begin{center}
$\nu(A)=\lim_{k\rightarrow\infty}\nu_{n_k}(A)=1.$
\end{center}
However, $HD(A)=\sup_k\{HD(A_k)\}<a+\varepsilon<dim_H^1 \nu$. This contradicts the definition of $dim_H^1 \nu$, which justifies our theorem.
\end{proof}

\begin{rem}\label{rem1}
Theorem \ref{thm1} is obviously wrong for weakly convergent sequences $\nu_n\stackrel{w}{\rightarrow}\nu$ as $n\rightarrow\infty$ in $\mathcal{M}(X)$, as one can see from our Example \ref{exm4}. Together with Example \ref{exm3}, one can see that both lower semi-continuity and upper semi-continuity are not true for the two measure-dimension mappings $dim_H^0$ and $dim^1_H$  under the weak topology. 

\end{rem}

\begin{corollary}\label{cor1}
If $ \nu_n\stackrel{s}{\rightarrow}\nu$ as $n\rightarrow\infty$ and $\esssup\{dim_H^1 \nu_n\}_{n=1}^\infty\leq dim_H^1 \nu$, then 
\begin{center}
$\lim_{n\rightarrow\infty} dim_H^1 \nu_n=dim_H^1 \nu$.
\end{center}
\end{corollary}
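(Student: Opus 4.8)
The plan is to deduce Corollary \ref{cor1} as a direct combination of the lower semi-continuity from Theorem \ref{thm1} and a trivial upper bound coming from the hypothesis. First I would observe that Theorem \ref{thm1} gives immediately
\[
\liminf_{n\rightarrow\infty} dim_H^1 \nu_n\geq dim_H^1 \nu,
\]
since $\nu_n\stackrel{s}{\rightarrow}\nu$. So the only thing left is to produce the matching inequality $\limsup_{n\rightarrow\infty} dim_H^1 \nu_n\leq dim_H^1 \nu$, and for this I would use the assumption $\esssup\{dim_H^1 \nu_n\}_{n=1}^\infty\leq dim_H^1 \nu$. The point of the essential supremum here (rather than an ordinary supremum) is that it tolerates a measure-zero — equivalently, by discreteness of the index set $\mathbb{N}$, a finite — set of exceptional indices $n$ for which $dim_H^1 \nu_n$ may exceed $dim_H^1 \nu$. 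Since finitely many terms never affect a $\limsup$ of a sequence, we still get $\limsup_{n\rightarrow\infty} dim_H^1 \nu_n\leq \esssup\{dim_H^1 \nu_n\}_{n=1}^\infty\leq dim_H^1 \nu$.

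Combining the two inequalities,
\[
dim_H^1 \nu\leq \liminf_{n\rightarrow\infty} dim_H^1 \nu_n\leq \limsup_{n\rightarrow\infty} dim_H^1 \nu_n\leq dim_H^1 \nu,
\]
which forces all four quantities to agree, i.e. $\lim_{n\rightarrow\infty} dim_H^1 \nu_n$ exists and equals $dim_H^1 \nu$. That is the whole argument; there is essentially no computation involved.

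The only mild subtlety — and the one place I would be careful in the write-up — is the exact reading of $\esssup$ over a sequence indexed by $\mathbb{N}$. If the intended counting/reference measure on $\mathbb{N}$ is the counting measure, then $\esssup$ coincides with the ordinary $\sup$ and the exceptional-set remark is vacuous; if instead it is some measure giving finite sets measure zero (the natural reading that makes the hypothesis weakest, matching how the sequences $\{\nu_n\}$ arise from truncated IFS in later sections), then one genuinely needs the observation that dropping finitely many terms does not change the $\limsup$. I would state explicitly which convention is used, note that under either convention $\limsup_{n\rightarrow\infty} dim_H^1 \nu_n\leq \esssup\{dim_H^1 \nu_n\}_{n=1}^\infty$, and let the rest follow. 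No other obstacle arises; the corollary is a two-line consequence of Theorem \ref{thm1}.
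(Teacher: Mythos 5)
Your proposal is correct and matches the paper's intent exactly: the paper's proof is the single line ``this follows directly from Theorem \ref{thm1},'' and the argument it is eliding is precisely the one you spell out, namely pairing the lower semi-continuity bound $\liminf_{n\rightarrow\infty} dim_H^1 \nu_n\geq dim_H^1 \nu$ with the hypothesis to obtain the matching $\limsup$ bound. Your side remark that $\limsup_{n\rightarrow\infty} dim_H^1 \nu_n\leq \esssup\{dim_H^1 \nu_n\}_{n=1}^\infty$ under either reading of the essential supremum is a sensible clarification, but it does not change the route.
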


\begin{proof}
This follows directly from Theorem \ref{thm1}.
\end{proof}

Note that the proof of Theorem \ref{thm1} can not be applied to the measure-dimension mapping $dim^0_H$, because if we choose a sequence of sets $\{A_k\}_{k=1}^\infty$, such that 
\begin{center}
$\nu_{n_k}(A_k)>0$ and $dim_H^0 A_k<a+\varepsilon< dim_H^0 \nu$
\end{center}
we can not guarantee $\nu(A)=\lim_{k\rightarrow\infty}\nu_{n_k}(A)>0.$ In fact one can see from the following example that lower semi-continuity of the measure-dimension mapping $dim^0_H$ is not always true for sequences of measures $\nu_n\stackrel{s}{\rightarrow}\nu$ in $\mathcal{M}(X)$.

\begin{exm}\label{exm5}
Define a sequence of probability measures $\{\nu_n\}_{n\in\mathbb{N}}$ on $[0,1]$ to be 
\begin{center}
$ \nu_n=\frac{1}{n} \delta_{0}+\mathfrak{L}^1|_{[\frac{1}{n},1]}$ 
\end{center} 
for $n\in\mathbb{N}$.
\end{exm}
It is easy to see that $\| \nu_n-\mathfrak{L}^1|_{[0,1]} \|_{TV}=\frac{1}{n}\rightarrow 0$ as $n\rightarrow\infty$, so $ \nu_n\stackrel{TV}{\rightarrow}\delta_{0}$, however,

\begin{center}
$\lim_{n\rightarrow\infty}dim^0_H \nu_n=0<1=dim^0_H \mathfrak{L}^1|_{[0,1]}=dim_H^1 \mathfrak{L}^1|_{[0,1]}$.
\end{center}
Simple calculations also show that, in Example \ref{exm5},
\begin{center}
$dim_C \nu_n=dim_{MC} \nu_n=\lim_{r\rightarrow 0}\cfrac{\log \int\nu_n(B(x,r))d\nu_n}{\log r}=\cfrac{\log\big((\frac{1}{n})^2+2r\frac{n-1}{n}\big)}{\log r}=0$,
\end{center} 
while
\begin{center}
$dim_C \mathfrak{L}^1|_{[0,1]}=dim_{MC} \mathfrak{L}^1|_{[0,1]}=1$.
\end{center}
Thus the example also justifies the fact that the two measure-dimension mappings $dim_C$ and $dim_{MC}$ can not be lower semi-continuous under the TV topology. Considering Theorem \ref{thm5}, we still owe the readers an example of a convergent sequence of measures violating the upper semi-continuity of $dim_C$ and $dim_{MC}$ under the TV topology. The following example is an evolution of \cite[Example 2.5]{MMR}.

\begin{exm}\label{exm7}
Let $a\in(0,1)$ be a real number. Consider the sequence $\{a^{n^2}\}_{n=0}^\infty\subset [0,1]$. 
Define a sequence of probability measures $\nu_n$ on $[0,1]$ to be 
\begin{center}
$\nu_n=\cfrac{1-a}{1-a^{n+1}}\sum_{i=0}^n\cfrac{a^i}{a^{i^2}-a^{{(i+1)}^2}}\mathfrak{L}^1|_{[a^{i^2},a^{{(i+1)}^2}]}$
\end{center}
for any $n\in\mathbb{N}$. Let $\nu$ be the probability measure on $[0,1]$ with
\begin{center}
$\nu=(1-a)\sum_{i=0}^\infty \cfrac{a^i}{a^{i^2}-a^{{(i+1)}^2}}\mathfrak{L}^1|_{[a^{i^2},a^{{(i+1)}^2}]}$.
\end{center}

\end{exm}

First we show the sequence of measures converges under the TV topology.
\begin{lemma}\label{lem7}
For the sequence of measures $\{\nu_n\}_{n\in\mathbb{N}}$ in Example \ref{exm7}, we have
\begin{center}
$\nu_n\stackrel{TV}{\rightarrow}\nu$ as $n\rightarrow\infty$.
\end{center}
 \end{lemma}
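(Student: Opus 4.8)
The plan is to show that $\nu_n \stackrel{TV}{\rightarrow} \nu$ by directly estimating $\Vert \nu_n - \nu \Vert_{TV}$ and showing it tends to $0$. First I would observe that both measures are absolutely continuous with respect to $\mathfrak{L}^1|_{[0,1]}$ with densities that are piecewise constant on the intervals $I_i := [a^{i^2}, a^{(i+1)^2}]$. On each such interval, $\nu$ has density $(1-a)\frac{a^i}{a^{i^2} - a^{(i+1)^2}}$, while $\nu_n$ has density $\frac{1-a}{1-a^{n+1}}\cdot\frac{a^i}{a^{i^2}-a^{(i+1)^2}}$ for $0 \le i \le n$ and density $0$ for $i > n$. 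Since for measures $\mu, \lambda$ absolutely continuous w.r.t.\ a common reference measure, the total variation distance equals $\frac{1}{2}$ times the $L^1$-distance of the densities (and in any case is bounded by it), it suffices to control the integral of the absolute difference of these densities.

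The key computation splits into two ranges. For $i > n$, the contribution to $\Vert \nu_n - \nu\Vert_{TV}$ is exactly $\nu\big(\bigcup_{i>n} I_i\big) = (1-a)\sum_{i>n} a^i = a^{n+1}$, using that $\int_{I_i} \frac{a^i}{a^{i^2}-a^{(i+1)^2}}\,d\mathfrak{L}^1 = a^i$. For $0 \le i \le n$, on each $I_i$ the two densities differ only by the scalar factor $\big|1 - \frac{1}{1-a^{n+1}}\big| = \frac{a^{n+1}}{1-a^{n+1}}$ times $\nu$'s density, so that range contributes at most $\frac{a^{n+1}}{1-a^{n+1}}\cdot \nu([0,1]) = \frac{a^{n+1}}{1-a^{n+1}}$. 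Adding the two bounds gives $\Vert \nu_n - \nu\Vert_{TV} \le a^{n+1} + \frac{a^{n+1}}{1-a^{n+1}} \to 0$ as $n \to \infty$, since $a \in (0,1)$. One should of course first record the elementary normalization check that $\nu_n$ and $\nu$ are genuine probability measures, i.e.\ $\nu_n([0,1]) = \frac{1-a}{1-a^{n+1}}\sum_{i=0}^n a^i = 1$ and $\nu([0,1]) = (1-a)\sum_{i=0}^\infty a^i = 1$.

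I do not expect a serious obstacle here: the statement is essentially a bookkeeping exercise because both measures are explicitly given as locally constant densities on a common partition, and the intervals $[a^{i^2}, a^{(i+1)^2}]$ are pairwise disjoint with union $(0,1]$. The only mild care needed is keeping track of the two distinct sources of discrepancy — the truncation at level $n$ (the tail mass $a^{n+1}$) and the renormalization constant $\frac{1-a}{1-a^{n+1}}$ versus $1-a$ — and noting that both are geometrically small in $n$. The telescoping identity $\int_{[a^{i^2}, a^{(i+1)^2}]} d\mathfrak{L}^1 = a^{i^2} - a^{(i+1)^2}$ is what makes the per-interval masses collapse to the clean values $a^i$, and that is the one place to be explicit.
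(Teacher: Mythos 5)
Your proposal is correct and follows essentially the same route as the paper: a direct estimate of $\Vert\nu_n-\nu\Vert_{TV}$ obtained by comparing the piecewise-constant densities interval by interval, with the discrepancy split into the truncation tail $a^{n+1}$ and the renormalization factor $\frac{a^{n+1}}{1-a^{n+1}}$, both geometrically small. Your version is in fact more carefully justified than the paper's one-line computation (whose stated exact value of the TV norm is debatable, though immaterial, since with the sup-over-sets definition the distance works out to exactly $a^{n+1}$), so there is nothing to correct.
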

 
\begin{proof}
In this case we have the following estimation on the total variation of the difference between the two measures $\nu_n$ and $\nu$,
\begin{center}
$\|\nu_n-\nu\|_{TV}=\max\{a^{n+1},\cfrac{a^{n+1}}{1-a^{n+1}}\}=\cfrac{a^{n+1}}{1-a^{n+1}}$
\end{center}
for any $n\in\mathbb{N}$. So $\lim_{n\rightarrow\infty}\|\nu_n-\nu\|_{TV}=0$, which justifies its convergence under the TV topology.
\end{proof} 

Although the correlation dimension and modified correlation dimension appear as global concepts, they are sometimes dominated by some properties on a subset of $X$ of relatively small measure, as indicated by the following result.

\begin{lemma}\label{lem6}
Let $\nu\in\mathcal{M}(X)$ be a probability measure. For any $r$ small enough, if there exists a subset $Y_r\subset X$ of positive measure, such that for \emph{a.e.} $x\in Y_r$, the following estimation holds,
\begin{equation}\label{eq28}
\nu(B(x,r))\nu(Y_r)\geq r^{o(1)},
\end{equation}
in which $o(1)$ is a positive term satisfying $\lim_{r\rightarrow 0} o(1)=0$. Then we have
\begin{center}
$dim_C\nu=dim_{MC}\nu=0$.
\end{center}
\end{lemma}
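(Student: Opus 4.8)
The plan is to bound the correlation integral $\int \nu(B(x,r))\,d\nu(x)$ from below on the subset $Y_r$ and extract the claimed vanishing of both $dim_C$ and $dim_{MC}$. First I would simply discard the part of the integral over $X\setminus Y_r$ (it is non-negative) and write
\begin{equation}\label{eq29}
\int_X \nu(B(x,r))\,d\nu(x)\geq \int_{Y_r}\nu(B(x,r))\,d\nu(x)\geq \nu(Y_r)\cdot \frac{r^{o(1)}}{\nu(Y_r)}=r^{o(1)},
\end{equation}
where in the middle inequality I have used the hypothesis (\ref{eq28}) pointwise for a.e. $x\in Y_r$, namely $\nu(B(x,r))\geq r^{o(1)}/\nu(Y_r)$, and then integrated over $Y_r$. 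Taking logarithms and dividing by $\log r<0$ (for $r$ small) reverses the inequality, so $\frac{\log\int_X \nu(B(x,r))\,d\nu}{\log r}\leq \frac{o(1)\log r}{\log r}=o(1)$; combined with the trivial lower bound $\nu(B(x,r))\leq 1$ (which forces the quotient to be $\geq 0$), letting $r\to 0$ gives $dim_C\nu=0$.

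For $dim_{MC}\nu$ the argument is the same, carried out on the truncated integrals. For any $\delta>0$ and any $A\in\mathscr{A}$ with $\nu(A)\geq 1-\delta$, if $r$ is small enough that $\nu(Y_r)>\delta$ then $\nu(A\cap Y_r)\geq \nu(Y_r)-\delta>0$, and restricting the integral $\int_A \nu(B(x,r))\,d\nu$ to $A\cap Y_r$ and applying (\ref{eq28}) exactly as in (\ref{eq29}) yields $\int_A \nu(B(x,r))\,d\nu\geq \nu(A\cap Y_r)\cdot r^{o(1)}/\nu(Y_r)\geq r^{o(1)}$ (absorbing the harmless constant factor $\nu(A\cap Y_r)/\nu(Y_r)\in(0,1]$ into the $o(1)$ after taking logs). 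Hence for each fixed $\delta$ the inner limit $\lim_{r\to 0}\frac{\log\int_A\nu(B(x,r))\,d\nu}{\log r}$ is $\leq 0$ for every admissible $A$, so the supremum over such $A$ is $\leq 0$, and since $dim_{MC}\nu\geq dim_H^0\nu\geq 0$ by the inequality $dim_H^0\nu\leq dim_C\nu\leq dim_{MC}\nu$ recalled in Section \ref{sec2}, we conclude $dim_{MC}\nu=0$ as well.

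The only delicate point — and the one I would state carefully rather than gloss over — is the bookkeeping of the $o(1)$ term: the hypothesis allows the exponent to depend on $r$ (and implicitly on $Y_r$), so I must make sure that after taking $\log$ and dividing by $\log r$ the surviving error is a genuine function of $r$ tending to $0$, uniformly in the choice of $A$ in the $dim_{MC}$ computation. Since the extra multiplicative constants appearing ($\nu(Y_r)$, $\nu(A\cap Y_r)$) are bounded between a fixed positive lower bound and $1$ once $r$ is small, their logarithms contribute $O(1)$, which divided by $\log r$ again gives $o(1)$; so this causes no real trouble. I do not expect any serious obstacle here — the lemma is essentially a one-line lower bound on the correlation integral dressed up for later use.
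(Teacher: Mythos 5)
Your treatment of $dim_C$ is correct and is essentially the paper's own argument: integrate the pointwise bound $\nu(B(x,r))\geq r^{o(1)}/\nu(Y_r)$ over $Y_r$ to get $\int_X\nu(B(x,r))\,d\nu\geq r^{o(1)}$, take logarithms, divide by $\log r<0$, and use $\int_X\nu(B(x,r))\,d\nu\leq 1$ for the matching bound from the other side (you call $\nu(B(x,r))\leq 1$ a ``lower bound'', but you use it correctly). The paper writes this more tersely --- its first displayed inequality even leaves $x$ free on the right-hand side --- but the content is identical, and this half of the lemma is fine.

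The genuine gap is in your $dim_{MC}$ argument, precisely at the point you yourself flagged as delicate. The step ``if $r$ is small enough that $\nu(Y_r)>\delta$'' is not available: the hypothesis only gives $\nu(Y_r)>0$, and in the one application the lemma is built for (Proposition \ref{pro1} via Example \ref{exm7}) one has $\nu(Y_r)=a^{n_r}\rightarrow 0$ as $r\rightarrow 0$, so for every fixed $\delta>0$ the inequality $\nu(Y_r)>\delta$ fails for all small $r$; a set $A$ with $\nu(A)\geq 1-\delta$ may then be disjoint from $Y_r$ for all small $r$, and your lower bound on $\int_A\nu(B(x,r))\,d\nu$ evaporates. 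Moreover the $dim_{MC}$ half of the statement appears to be false as stated under Definition \ref{def6}: for the measure $\nu$ of Example \ref{exm7} take $A_N=[a^{N^2},1]$, so that $\nu(A_N)=1-a^N\geq 1-\delta$ for $N$ large; on $A_N$ the density of $\nu$ with respect to $\mathfrak{L}^1$ is bounded between two positive constants depending on $N$, and $\nu(B(x,r))\leq C_N r$ for all $x\in A_N$ and $r$ small, whence $\lim_{r\rightarrow 0}\frac{\log\int_{A_N}\nu(B(x,r))\,d\nu}{\log r}=1$ and $dim_{MC}\nu\geq 1$, even though (\ref{eq28}) holds. The underlying issue is that $\int_A\leq\int_X$ gives $\frac{\log\int_A}{\log r}\geq\frac{\log\int_X}{\log r}$, so an upper bound on the full correlation quotient yields no upper bound on the truncated ones; the paper's proof never addresses $dim_{MC}$ separately and simply asserts it from the $dim_C$ computation. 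You have therefore correctly located the weak point of the lemma, but your patch does not close it, and for the $dim_{MC}$ conclusion no patch can --- only the $dim_C$ part is salvageable.
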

\begin{proof}
Under the above assumptions, we have
\begin{center}
$\int \nu(B(x,r))d\nu(x)\geq \nu(B(x,r))\nu(Y_r)$.
\end{center}
So 
\begin{center}
$\log \int \nu(B(x,r))d\nu(x)\geq\log \big(\nu(B(x,r))\nu(Y_r)\big)\geq o(1)\log r$,
\end{center}
which gives that
\begin{center}
$\cfrac{\log \int \nu(B(x,r))d\nu(x)}{\log r}\leq o(1)$.
\end{center}
The proof ends by letting $r\rightarrow 0$ in the above estimation.  
\end{proof}

Now we pay attention to the correlation and modified correlation dimensions of the limit measure of the sequence $\{\nu_n\}_{n\in\mathbb{N}}$ in Example \ref{exm7}.
\begin{proposition}[Mattila-Mor\'an-Rey]\label{pro1}
For the limit measure $\nu$ in Example \ref{exm7}, we have
\begin{center}
$dim_C\nu=dim_{MC}\nu=0$.
\end{center}
\end{proposition}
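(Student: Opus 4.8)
\emph{Proof proposal.} The plan is to deduce both equalities from Lemma \ref{lem6} by using, for each small $r>0$, a single building block of the construction as the witness set $Y_r$. Write $I_i:=[a^{(i+1)^2},a^{i^2}]$ for the $i$-th defining interval and $\ell_i:=a^{i^2}-a^{(i+1)^2}$ for its length, so that $\nu$ restricted to $I_i$ equals $\rho_i\,\mathfrak{L}^1|_{I_i}$ with $\rho_i=(1-a)a^i/\ell_i$; in particular $\nu(I_i)=(1-a)a^i$, while plainly $\ell_i\le a^{i^2}$. The mechanism behind the vanishing of $dim_C\nu$ is that the construction packs the only geometrically small mass $a^i$ into the \emph{super}-geometrically short interval $I_i$, so $\nu$ looks extremely concentrated when probed at the scale $\ell_i$ of that block — even though $\nu\ll\mathfrak{L}^1$ and so, at finer scales, $\nu$ is in fact spread out.

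First I would fix a small $r>0$ and let $i=i(r)$ be the least integer with $a^{i^2}\le r$. Then $i(r)\to\infty$ as $r\to 0$, and from $a^{(i-1)^2}>r$ one reads off $i(r)\le 1+\sqrt{\log(1/r)/\log(1/a)}$, i.e. $i(r)=O\!\big(\sqrt{\log(1/r)}\,\big)$. Put $Y_r:=I_i$, which has positive $\nu$-measure. The key geometric point is that $r\ge a^{i^2}\ge\ell_i=\operatorname{diam}I_i$, so that for \emph{every} $x\in I_i$ the ball $B(x,r)=[x-r,x+r]$ contains all of $I_i$; hence $\nu(B(x,r))\ge\nu(I_i)=(1-a)a^i$ for every $x\in Y_r$.

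Combining, for every $x\in Y_r$ we get $\nu(B(x,r))\,\nu(Y_r)\ge\big((1-a)a^{i}\big)^2=r^{\theta(r)}$, where $\theta(r):=2\log\!\big((1-a)a^{i}\big)/\log r$. Since $\log r<0$ and $\log\!\big((1-a)a^{i}\big)<0$ we have $\theta(r)>0$, and because the numerator $2\,|\log((1-a)a^{i})|$ is $O\!\big(\sqrt{\log(1/r)}\,\big)$ by the bound on $i(r)$, we get $\theta(r)=O\!\big(1/\sqrt{\log(1/r)}\,\big)\to 0$ as $r\to 0$. Thus hypothesis (\ref{eq28}) of Lemma \ref{lem6} holds with its $o(1)$ term equal to $\theta(r)$, and the lemma yields $dim_C\nu=dim_{MC}\nu=0$.

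I do not expect a genuine obstacle: the only step requiring care is the elementary bookkeeping $i(r)=O\!\big(\sqrt{\log(1/r)}\,\big)$ — this is exactly what makes $\theta(r)$ vanish — together with the harmless remark that the containment $I_i\subset B(x,r)$ holds for \emph{all} $x\in I_i$, not merely a.e. If one prefers not to invoke Lemma \ref{lem6}, the same computation can be run directly on the correlation integral: keep only the diagonal block, $\int\nu(B(x,r))\,d\nu\ge\int_{I_{i}}\nu\big(B(x,r)\cap I_{i}\big)\,d\nu=\nu(I_{i})^{2}$ (the last equality because $r\ge\ell_i$), and then choose $i=i(r)$ exactly as above.
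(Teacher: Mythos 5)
Your proof is correct and follows essentially the same route as the paper: both verify the hypothesis of Lemma \ref{lem6} by choosing a witness set of diameter at most $r$ and $\nu$-measure comparable to $a^{n}$ with $n\asymp\sqrt{\log(1/r)}$, the only difference being that you take the single block $I_{i(r)}$ where the paper takes the tail interval $[0,a^{n_r^2}]$. Your explicit choice of $i(r)$ as the least index with $a^{i^2}\le r$ also cleanly supplies the two-sided scale comparison that makes the exponent $\theta(r)$ tend to $0$.
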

\begin{proof}
We achieve the conclusion by Lemma \ref{lem6}. Suppose $r$ is a number small enough such that
\begin{equation}\label{eq29}
a^{n_r^2}\leq r<a^{(n_r+1)^2}
\end{equation}
for some $n_r\in\mathbb{N}$. Let $Y_r=Y_{n_r}=[0,a^{n_r^2}]$, then we have
\begin{center}
$\nu(Y_{n_r})=(1-a)(a^{n_r}+a^{n_r+1}+\cdots)=(1-a)\cfrac{a^{n_r}}{(1-a)}=a^{n_r}$.
\end{center}
For any $x\in Y_{n_r}$, since $Y_{n_r}\subset B(x,r)$, we have
\begin{center}
$\nu(B(x,r))\geq \nu(Y_{n_r})=a^{n_r}$.
\end{center}
So 
\begin{center}
$\nu(B(x,r))\nu(Y_r)\geq a^{2n_r}$
\end{center}
for any $x\in Y_r$. Considering (\ref{eq29}), the condition (\ref{eq28}) is satisfied in this case. So by 
Lemma \ref{lem6}, we have
\begin{center}
$dim_C\nu=dim_{MC}\nu=0$.
\end{center}
\end{proof}

Obviously for the sequence of measures $\{\nu_n\}_{n\in\mathbb{N}}$ in Example \ref{exm7}, we have
\begin{center}
$dim_C\nu_n=dim_{MC}\nu_n=1$.
\end{center}
Combining this with Lemma \ref{lem7} and Proposition \ref{pro1}, we conclude that the measure dimension mappings $dim_C$ and $dim_{MC}$ can not be upper semi-continuous in general.

Now we prove the semi-continuity result for the measure-dimension mapping $dim^0_H$.\\

Proof of Theorem \ref{thm4}:

\begin{proof}
Accoriding to the definition of  $dim^0_H \nu$, for any small $\varepsilon>0$, we can find a measurable set $A$  with $\nu(A)>0$ and  $HD(A)\leq dim^0_H \nu+\varepsilon$. Since $\nu_n\stackrel{s}{\rightarrow}\nu$  as $n\rightarrow\infty$, we can guarantee that 
\begin{center}
$\nu_n(A)>0$
\end{center}
for any $n$ large enough. This implies that 
\begin{center}
$\limsup_{n\rightarrow\infty} dim^0_H \nu_n\leq dim^0_H \nu+\varepsilon$.
\end{center}
The proof is finished by letting $\epsilon\rightarrow 0$.

\end{proof}

We end the section by some results on comparing the dimensions between two comparable measures.
\begin{lemma}\label{lem3}
For any two probability measures $\nu_1, \nu_2\in\mathcal{M}(X)$, if $\nu_1$ is absolutely continuous with respect to $\nu_2$, then 
\begin{center}
$\dim_H^1 \nu_1\leq \dim_H^1  \nu_2$.
\end{center}
\end{lemma}
\begin{proof}
For $A\in\mathcal{A}$,  if $\nu_2(A)=1$, then $\nu_2(X\setminus A)=0$. Since $\nu_1$ is absolutely continuous with respect to $\nu_2$, we have $\nu_1(X\setminus A)=0$, so $\nu_1(A)=1$, so
\begin{center}
$dim_H^1 \nu_2=\inf\{HD(A): \nu_2(A)=1\}\geq\inf\{HD(A): \nu_1(A)=1\}=dim_H \nu_1$.
\end{center}
\end{proof}
One is recommended to compare the lemma with \cite[p220(a)]{MMR}. These conclusions again show duality of the two measure-dimension mappings.
\begin{corollary}\label{cor2}
For any two probability measures $\nu_1, \nu_2\in\mathcal{M}(X)$, if $\nu_1$ is equivalent to  $\nu_2$, then 
\begin{center}
$\dim_H ^1\nu_1= \dim_H^1 \nu_2$.
\end{center}

\end{corollary}

See also \cite[Lemma 3.1(3)]{HS}. Combining Young's Lemma and Corollary \ref{cor2}, we have the following result.

\begin{corollary}
For two equivalent measures $\nu_1, \nu_2\in\mathcal{M}(X)$ with one of them being ergodic with respect to some non-inverse-dimension-expanding transformation $T$ on $X$, we have
\begin{center}
$\dim_H^0 \nu_1= \dim_H^0  \nu_2=\dim_H^1 \nu_1= \dim_H^1  \nu_2$.
\end{center}
\end{corollary}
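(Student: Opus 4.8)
The plan is to combine the two ingredients the excerpt has already assembled: Young's Lemma, which collapses $\dim_H^0$ and $\dim_H^1$ on ergodic measures for non-inverse-dimension-expanding $T$, and Corollary \ref{cor2}, which says $\dim_H^1$ is an invariant of the equivalence class of a measure. First I would fix notation: say without loss of generality that $\nu_1$ is the ergodic one (the statement is symmetric in the two measures up to relabelling, since equivalence is symmetric). Since $\nu_1$ is ergodic with respect to $T$ and $T$ is not inverse-dimension-expanding, Young's Lemma applies directly to $\nu_1$ and yields $\dim_H^0 \nu_1 = \dim_H^1 \nu_1$.

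Next I would transport this equality to $\nu_2$. Because $\nu_1$ and $\nu_2$ are equivalent, Corollary \ref{cor2} gives $\dim_H^1 \nu_1 = \dim_H^1 \nu_2$ immediately. For the lower measure-dimension mapping I would argue the same way: equivalence of measures implies mutual absolute continuity, so by the (obvious, definition-level) monotonicity of $\dim_H^0$ under absolute continuity — if $\nu_a \ll \nu_b$ and $\nu_b(A) > 0$ forces... actually one needs the statement in the form "$\nu_a(A)>0 \Rightarrow \nu_b(A)>0$", which holds precisely because $\nu_b \ll \nu_a$, hence $\dim_H^0 \nu_b \le \dim_H^0 \nu_a$; applying this in both directions gives $\dim_H^0 \nu_1 = \dim_H^0 \nu_2$. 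Chaining the three equalities $\dim_H^0 \nu_2 = \dim_H^0 \nu_1 = \dim_H^1 \nu_1 = \dim_H^1 \nu_2$ closes the loop, and using $\dim_H^0 \le \dim_H^1$ everywhere shows all four quantities coincide.

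The only mild subtlety — and the place I would be careful rather than the place that is genuinely hard — is making sure the $\dim_H^0$ equality under equivalence is actually justified, since the excerpt only states Lemma \ref{lem3} and Corollary \ref{cor2} for $\dim_H^1$, not for $\dim_H^0$. I would either prove the one-line $\dim_H^0$ monotonicity inline (it is the exact dual of Lemma \ref{lem3}: swap the roles of the measures and replace "$\nu(A)=1$" by "$\nu(A)>0$"), or, more economically, observe that I do not even need it: from Young applied to $\nu_1$ I get $\dim_H^0\nu_1 = \dim_H^1\nu_1$, from Corollary \ref{cor2} I get $\dim_H^1\nu_1=\dim_H^1\nu_2$, and then the universal inequality $\dim_H^0\nu_2 \le \dim_H^1\nu_2 = \dim_H^1\nu_1 = \dim_H^0\nu_1$ combined with the dual monotonicity $\dim_H^0\nu_1 \le \dim_H^0\nu_2$ (again the trivial half) pins down equality. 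So the proof is essentially a three-line citation chain, and there is no real obstacle — the content is entirely front-loaded into Young's Lemma and Corollary \ref{cor2}, and the corollary is just bookkeeping that records their compatibility.
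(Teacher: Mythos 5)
Your proof is correct and follows the same route the paper intends: Young's Lemma applied to the ergodic member of the pair gives $\dim_H^0=\dim_H^1$ there, Corollary \ref{cor2} transports $\dim_H^1$ across the equivalence, and the dual (elementary) monotonicity of $\dim_H^0$ under absolute continuity closes the chain. One cosmetic slip: the implication $\nu_a(A)>0\Rightarrow\nu_b(A)>0$ is the contrapositive of $\nu_a\ll\nu_b$, not of $\nu_b\ll\nu_a$, but since the measures are mutually absolutely continuous this does not affect the argument.
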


This corollary suggests a possible way to deal with the dimensions $\dim_H^0 \nu$ and $\dim_H^1  \nu$ of some (non-invariant) measure $\nu\in\mathcal{M}(X)$ simultaneously. That is, for some concerning measure $\nu\in\mathcal{M}(X)$, if one can find an ergodic measure $\nu_e\in\mathcal{M}(X)$ equivalent with $\nu$, then the dimension of  $\nu_e$ gives the dimension of $\nu$. As there are dynamical structure with respect to $\nu_e$, the dimension of  $\nu_e$ may be easier to be calculated. In case of non-existence of such an ergodic measure equivalent with $\nu$, one may resort to the ergodic decomposition of  $\nu$. The method still has some meaning for an invariant measure  $\nu$, though an invariant measure $\nu$ absolutely continuous with respect to an ergodic one $\nu_e$ necessarily satisfies
\begin{center}
$\nu=\nu_e$
\end{center}
according to \cite[P153, Remarks(1)]{Wal}.

\section{Applications to conformal iterated function systems}\label{sec6}

In this section we apply our semi-continuity results of the measure dimension mappings $dim_H^0$ and $dim_H^1$ to the conformal measures originated from conformal iterated function systems (CIFS), to release the Mauldin-Urba\'nski Theorem from the restriction of finite entropy. We also pay some attention to the correlation dimension $dim_C$ and modified correlation dimension $dim_{MC}$ of our concerning exploding measures in this section.

We first collect the following results from \cite{MU1}, which will be of our interest in this work.

\begin{theorem}[Mauldin-Urba\'nski]\label{thm20}

For a CIFS $S=\{s_i: X\rightarrow X\}_{i\in I}$ with a countable index set $I$, we have

\begin{itemize}
\item[(a)] $\Sigma_{i\in I} \| s'_i(x)\|^d \leq K^d$ and $\lim_{i\in I} diam(s_i(X))=0$ for a system with infinite index set $I$, in which $K\geq 1$ is the distortion parameter in BDP.

\item[(b)] In case of existence of a $t$-conformal measure $m$ on $J$, there exists a unique Borel probability measure $\mu$ on $I^\infty$ such that 
\begin{center}
$\mu([\omega])=\int_X |s'_\omega(x)|^t\ dm$
\end{center}
for any $\omega\in I^*$.

\item[(c)] In case of existence of a $t$-conformal measure $m$ on $J$, there exists a unique ergodic $\sigma$-invariant probability measure $\mu^*$ equivalent with $\mu$. The pushing-forward measure $m^*$ of $\mu^*$ under $\pi$ is an ergodic measure on $J$ equivalent with $m$.

\item[(d)] A $t$-conformal measure $m$ exists if and only if $P(t)=0$. In case of its existence, it is unique, and $m$-almost every point $x\in J$ has a unique pre-image $\pi^{-1}(x)$.

\item[(e)] When $P(t)=0$ has a solution $h$, then $h$ is the unique solution satisfying
\begin{center}
$h=HD(J)$. 
\end{center}
In the general case, we have
\begin{equation}\label{eq3}
HD(J)=\inf\{t\geq 0: P(t)<0\}=\sup_{F_I\subset I, \#F_I<\infty}\{HD(J_{F_I})\}\geq\inf\{t\geq 0: P(t)<\infty \}
\end{equation}
in which $J_{F_I}$ is the limit set of the finite  CIFS with finite indexing set $F_I\subset I$.

\item[(f)] If $S$ is a regular CIFS, then 
\begin{center}
$m_{F_I}\stackrel{w}{\rightarrow}m$, 
\end{center}
in which $m_{F_I}$ is the conformal measure with respect to the finite sub-system with finite index set $F_I\subset I$.
\end{itemize}

\end{theorem}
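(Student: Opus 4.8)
The statement bundles six facts about a single CIFS, so the plan is to organize the argument around the two engines that drive the whole theory: the geometric consequences of the open set condition (property (1)) together with the bounded distortion property (BDP), and the thermodynamic formalism for the shift $\sigma$ on $I^\infty$. I would prove (a) first, since every downstream estimate depends on summability. The key point is that the interiors $\{s_i(X^o)\}_{i\in I}$ are pairwise disjoint and contained in $X$, so their $d$-dimensional Lebesgue measures sum to at most $\mathfrak{L}^d(X)$; by conformality and BDP one has $\mathfrak{L}^d(s_i(X))\asymp \|s_i'\|^d\,\mathfrak{L}^d(X)$ up to the distortion constant $K$, and summing yields $\sum_{i\in I}\|s_i'(x)\|^d\leq K^d$. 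The vanishing $\operatorname{diam}(s_i(X))\to 0$ then follows because infinitely many pairwise disjoint sets of positive measure inside a bounded region must have diameters tending to $0$.

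For (b) I would construct $\mu$ by Kolmogorov's extension theorem: declare the cylinder values $\mu([\omega])=\int_X|s_\omega'|^t\,dm$ and verify the consistency relation $\sum_{i\in I}\mu([\omega i])=\mu([\omega])$. This consistency is exactly the $t$-conformality relation (\ref{eq4}) combined with the chain rule for $|s_\omega'|$ and the $m$-null overlap (\ref{eq18}), so that the images $\{s_{\omega i}(X)\}_i$ tile $s_\omega(X)$ up to measure zero; uniqueness is immediate since the cylinders generate $\mathcal{B}_{I^\infty}$. Parts (c) and (d) carry the real weight, and I would treat them through the Ruelle operator $\mathcal{L}_t g(x)=\sum_{i\in I}|s_i'(x)|^t\,g(s_i(x))$ acting on $C(X)$, whose exponential growth rate is the pressure $P(t)$ of (\ref{eq49}). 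A $t$-conformal measure is precisely a dual eigenmeasure $\mathcal{L}_t^*m=e^{P(t)}m$ with eigenvalue one: unwinding $\mathcal{L}_t^*m=m$ against indicators reproduces (\ref{eq4}) and (\ref{eq18}), which forces $P(t)=0$; conversely, when $P(t)=0$ a Schauder--Tychonoff fixed-point argument on the weak-$*$ compact space of probability measures on $X$ produces the eigenmeasure, giving the ``if and only if'' of (d). The equivalent $\sigma$-invariant measure $\mu^*$ in (c) arises as the Gibbs state $d\mu^*=\rho\,d\mu$, where $\rho$ is the eigenfunction of $\mathcal{L}_t$, its ergodicity following from the (topological) mixing supplied by $\#I\geq 2$ together with BDP; the unique-preimage claim in (d) is the measure-theoretic face of (\ref{eq18}), and $m^*=\mu^*\circ\pi^{-1}$.

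The dimension identity in (e) I would prove with the mass distribution principle applied to $m$. For the lower bound $HD(J)\geq h$, BDP yields the upper mass estimate $m(B(\pi(\omega),r))\lesssim r^{h}$ along the natural scales $r\asymp\operatorname{diam}(s_{\omega|_n}(X))$, and the principle converts this into $HD(J)\geq h$. For the upper bound, fix any $t>h$; by monotonicity and convexity of $P$ one has $P(t)<0$, so $\psi_n(t)=\sum_{|\omega|=n}\|s_\omega'\|^t\to 0$, and covering $J$ by the cylinder images $\{s_\omega(X)\}_{|\omega|=n}$ with $\operatorname{diam}(s_\omega(X))^t\asymp\|s_\omega'\|^t$ shows $\mathcal{H}^t(J)=0$, whence $HD(J)\leq t$; letting $t\downarrow h$ gives $HD(J)\leq h$. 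Here the summability from (a) is exactly what tames the infinite alphabet. The three-term display (\ref{eq3}) then follows from the same pressure monotonicity together with the finite-subsystem Bowen formula $HD(J_{F_I})=h_{F_I}$, exhausting $I$ by finite $F_I$. Finally, for (f) I would exhaust $I$ by $F_1\subset F_2\subset\cdots$; since every support $J_{F_n}\subset X$ is compact the family $\{m_{F_n}\}$ is automatically tight, hence weak-$*$ relatively compact by Prohorov's Theorem. Using that $HD(J_{F_n})\to HD(J)$ (from (e)) and passing the subsystem conformality relations to the limit, every weak limit point is $HD(J)$-conformal for the full system; regularity gives $P(HD(J))=0$, so by the uniqueness in (d) the sole limit point is $m$, and therefore $m_{F_n}\stackrel{w}{\rightarrow}m$.

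The step I expect to be the genuine obstacle is the construction and ergodicity in (c)--(d): in the infinite-alphabet regime the transfer operator $\mathcal{L}_t$ is neither compact nor even bounded without the finiteness input of (a), so producing the eigenmeasure, the eigenfunction $\rho$, and the spectral gap that yields ergodicity demands the summability $\sum_{i\in I}\|s_i'\|^t<\infty$ (available for $t\geq HD(J)$) and uniform control of distortion along words of all lengths. This is precisely where the theory departs from the classical finite Perron--Frobenius setting, and where the BDP and property (a) must be used most delicately.
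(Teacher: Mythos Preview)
The paper does not prove this theorem at all: it is introduced with the sentence ``We first collect the following results from \cite{MU1}, which will be of our interest in this work,'' and is stated without proof as a compendium of facts due to Mauldin and Urba\'nski. So there is no ``paper's own proof'' to compare your proposal against; the paper simply imports these results and uses them as black boxes in Sections~4 and~5.

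That said, your sketch is a faithful high-level outline of how the proofs actually go in the original source \cite{MU1}. The volume/disjointness argument for (a), the Kolmogorov extension for (b), the Ruelle--Perron--Frobenius eigenmeasure/eigenfunction machinery for (c)--(d), the mass distribution plus pressure-covering argument for (e), and the tightness-plus-uniqueness-of-limit-points argument for (f) are all the standard routes. Your identification of the genuine difficulty---that in the infinite-alphabet setting the transfer operator loses compactness and one must lean on the summability from (a) together with BDP to recover a spectral gap and ergodicity---is exactly right, and is the central technical contribution of \cite{MU1}. It is also worth noting that the paper does carry out, in Section~5 for the CGDMS setting, an argument for weak convergence (its Theorem~\ref{thm11}) that is structurally identical to your proposed proof of (f): tightness of $\{m_{F_n}\}$ via summability, identification of every weak limit as the unique eigenmeasure, and then Billingsley's Corollary~\ref{cor3}.
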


Due to Theorem \ref{thm20} (d), for $m$-almost every $x=\pi(\omega)=\pi(\omega_1\omega_2\cdots)\in J$,   define
\begin{center}
 $T(x)=s^{-1}_{\omega_1}(x)$. 
\end{center}
By pulling back  measures on $(J, \mathcal{B}_J)$ or pushing forward measures on $(I^\infty, \mathcal{B}_{I^\infty)}$ through $\pi$, we have the following commutative diagram,

\begin{center}
$
\begin{CD}
 E^\infty @>\sigma>> E^\infty\\
 @VV\pi V   @VV\pi V\\
J @>T>> J.
\end{CD}
$
\end{center}

Due to this reason, we will identify terminologies on the two dynamical systems $(J, \mathcal{B}_J, T, m^*)$ and $(I^\infty,  \mathcal{B}_{I^\infty}, \sigma, \mu^*)$. 

From now on we always assume  
\begin{center}
$S=\{s_i: X\rightarrow X\}_{i\in \mathbb{N}}$ 
\end{center}
is a regular countable infinite CIFS, without special declaration. Denote by  $h=HD(J)$. Due to Theorem \ref{thm20} (c), (d) and (e), there exists a unique $h$-conformal measure $m$ and a unique ergodic one $m^*$ (equivalent to $m$) supported on $J$.  We do not assume the system has finite entropy. Our strategy on proving Theorem \ref{thm3} is  to use its finite sub-systems to approximate the infinite CIFS $S$.  So let 
\begin{center}
$S_n=\{s_i: X\rightarrow X\}_{i\in \mathbb{N}_n}$
\end{center}
be its $n$-th finite sub-system with limit set $J_n$ for any $n\in\mathbb{N}$. Let $h_n=HD(J_n)$. According to Theorem \ref{thm20} (d) and (e), there exists a unique $h_n$-conformal measure $m_n$ supported on $J_n$ for any $n\in\mathbb{N}$. Let $m_n^*$ be the corresponding ergodic measure equivalent with $m_n$. Moreover, since $m_n^*$ (or $m_n$) has finite entropy, according to the Mauldin-Urba\'nski Theorem, we have  
\begin{center}
$h_n=dim_H^0 m_n=dim_H^1 m_n=dim_H^0 m_n^*=dim_H^1 m_n^*$ 
\end{center}
for any $n\in\mathbb{N}$. It is easy to see that 
\begin{center}
$J_n\subset J_{n+1}\subset J$ 
\end{center}
is a strictly increasing sequence of sets with respect to $n$, which implies that $\{h_n\}_{n=1}^\infty$ is a non-decreasing sequence. So by (\ref{eq3}), we have 
\begin{equation}\label{eq5}
\lim_{n\rightarrow\infty} h_n=h.
\end{equation}
According to Theorem \ref{thm20} (b), let $\mu_n$  be the unique Borel probability measure on  $\mathbb{N}_n^\infty$  such that 
\begin{center}
$\mu_n([\omega])=\int_X |s'_\omega(x)|^{h_n}\ dm_n$.
\end{center}
for any $\omega\in\mathbb{N}_n^\infty$. According to Theorem \ref{thm20} (c), let $\mu_n^*$  be the unique ergodic probability measure on $\mathbb{N}_n^\infty$ equivalent to $\mu_n$. All the measures can be extended trivially onto $J$, $X$, $\mathbb{N}_n^\infty$ or $\mathbb{N}^\infty$. 

We first give a rough comparison between the two measures $m$ and $m_n$ on $J_n$ for $n\in\mathbb{N}$. Again we remind the readers that all our systems are supposed to be regular CIFS till the end of the section, without special declaration.
\begin{lemma}\label{lem4}
For any measurable $A\subset J_n$, we have
\begin{equation}\label{eq19}
m(A)\leq K^h m_n(A)
\end{equation}
for any $n\in\mathbb{N}$.
\end{lemma}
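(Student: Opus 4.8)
The plan is to compare the $h$-conformal measure $m$ on $J$ with the $h_n$-conformal measure $m_n$ on $J_n$ by exploiting that both satisfy the conformality relation \eqref{eq4} and that $J_n$ is built only from the indices in $\mathbb{N}_n$. First I would fix $n$ and a measurable set $A\subset J_n$. By Theorem \ref{thm20}(d), $m$-almost every point of $J$ has a unique coding, so I can decompose $A$ according to the first symbol of this coding: writing $A_i=A\cap s_i(X)$ for $i\in\mathbb{N}_n$, the sets $\{A_i\}_{i\in\mathbb{N}_n}$ cover $A$ up to an $m$-null set (the overlaps $s_{i_1}(X)\cap s_{i_2}(X)$ being $m$-null by \eqref{eq18}). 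Since $A\subset J_n$, no symbol outside $\mathbb{N}_n$ can appear, so this is genuinely a finite decomposition. Iterating, for each word length $k$ one gets $A=\bigcup_{\omega\in\mathbb{N}_n^k}s_\omega(B_\omega)$ (mod $m$-null sets) for suitable $B_\omega\subset X$, and the conformality relation \eqref{eq4} applied to both $m$ and $m_n$ gives
\begin{center}
$m(s_\omega(B_\omega))=\int_{B_\omega}|s_\omega'|^h\,dm,\qquad m_n(s_\omega(B_\omega))=\int_{B_\omega}|s_\omega'|^{h_n}\,dm_n.$
\end{center}

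Next I would use the bounded distortion property (Definition \ref{def7}(4)) to control the integrands: $|s_\omega'(x)|$ and $|s_\omega'(y)|$ differ by at most a factor $K$ over $x,y\in X$, uniformly in $\omega$. This lets me replace $\int_{B_\omega}|s_\omega'|^h\,dm$ by something of the order $\|s_\omega'\|^h\,m(B_\omega)$ up to the multiplicative constant $K^h$, and similarly for $m_n$. The remaining point is to compare $\|s_\omega'\|^h\,m(B_\omega)$ with $\|s_\omega'\|^{h_n}\,m_n(B_\omega)$. Here I would take the limit as the word length $k\to\infty$: the diameters $\operatorname{diam}(s_\omega(X))\to0$ (this is (\ref{eq1}), or Theorem \ref{thm20}(a)), so the cylinder sums converge to the measures of the pieces, and in the limit the discrepancy between the exponents $h$ and $h_n$ is absorbed because $\|s_\omega'\|\le\gamma^{|\omega|}<1$ and $h_n\le h$, making $\|s_\omega'\|^{h_n}\ge\|s_\omega'\|^{h}$ pointwise. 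Summing over $\omega\in\mathbb{N}_n^k$ and letting $k\to\infty$ yields $m(A)\le K^h\,m_n(A)$. (In fact one could equally well argue directly at level $k=1$ using that $m$ restricted and renormalized to $J_n$ is, up to the distortion constant, a candidate conformal-type measure and invoking uniqueness, but the telescoping argument is cleaner to write.)

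The main obstacle I anticipate is bookkeeping the ``mod $m$-null'' caveats cleanly: one must be sure that the decomposition $A=\bigcup_{\omega}s_\omega(B_\omega)$ is exact up to $m$-measure zero at every level, which requires both the uniqueness-of-coding statement from Theorem \ref{thm20}(d) for $m$ and the overlap relation \eqref{eq18}; and one must check that the analogous null-set issues for $m_n$ do not spoil the inequality direction. A secondary technical point is making the interchange of the constant $K^h$ uniform over all word lengths so that it does not accumulate to $K^{h\cdot k}$ — this is exactly what the bounded distortion property buys us, since $K$ works for \emph{all} $\omega\in I^*$ simultaneously rather than compounding per symbol. Once these are handled, the inequality \eqref{eq19} drops out by passing to the limit.
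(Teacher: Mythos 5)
There is a genuine gap at the heart of your argument, and it sits exactly where you wave at it: the comparison of $\|s_\omega'\|^h\,m(B_\omega)$ with $\|s_\omega'\|^{h_n}\,m_n(B_\omega)$. Your observations about the exponents are fine --- since $\|s_\omega'\|\leq\gamma^{|\omega|}<1$ and $h_n\leq h$ one indeed gets $\|s_\omega'\|^{h}\leq\|s_\omega'\|^{h_n}$, and BDP gives a single constant $K$ uniform over all words so nothing compounds. But after the exponents are handled you are still left needing $m(B_\omega)\leq C\,m_n(B_\omega)$ for the sets $B_\omega=s_\omega^{-1}(A\cap s_\omega(X))$, which are again arbitrary measurable subsets of $J_n$. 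That is precisely the statement of the lemma itself, so the exact decomposition $A=\bigcup_\omega s_\omega(B_\omega)$ makes the argument circular: iterating to depth $k$ only reproduces the same unproved comparison one level down, and letting $k\to\infty$ does not resolve it --- there is no reason for the ratios $m(B_\omega)/m_n(B_\omega)$ to be controlled in the limit (a posteriori $m$ and $m_n$ are even mutually singular on $J_n$, by Theorem \ref{thm10}, so pointwise density arguments are hopeless). The same circularity defeats your parenthetical ``argue at level $k=1$'' variant.

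The way out, and what the paper does, is to estimate on \emph{full} cylinders $s_\omega(X)$ rather than on the exact pieces $s_\omega(B_\omega)$: then the ``remainder'' set is all of $X$, and $m(X)=m_n(X)=1$ kills the problematic factor. Concretely, $m(s_\omega(X))=\int_X|s_\omega'|^h\,dm\leq\|s_\omega'\|^h\leq\|s_\omega'\|^{h_n}\leq\int_X(K|s_\omega'|)^{h_n}\,dm_n\leq K^h\,m_n(s_\omega(X))$, where the first and third inequalities use that $m$ and $m_n$ are probability measures on $X$ and the third also uses BDP. One then covers a closed $A\subset J_n$ from the outside by $\bigcup_{\omega\in B_k}s_\omega(X)$ with $B_k=\{\omega\in\mathbb{N}_n^k: s_\omega(X)\cap A\neq\emptyset\}$, checks that these decrease to $A$ as $k\to\infty$, sums the cylinder estimate over $B_k$, and finally extends to all measurable sets by regularity of the two measures. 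Your ``mod null set'' bookkeeping worries are real but secondary; the essential missing idea is the switch from an exact inner decomposition to an outer covering by full cylinders, which is what breaks the circularity.
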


\begin{proof}
We first show the result is true on the projective cylinder sets on $J_n$, then transfer it to any closed measurable  subset $A\subset J_n$ through finer coverings. For $k\in\mathbb{N}$ and $\omega\in \mathbb{N}_n^k$, we have
\begin{center}
$\begin{array}{ll}
m(s_\omega(X)) & =\int_X|s_\omega'(x)|^hdm\leq \|s_\omega'\|^h\leq\|s_\omega'\|^{h_n}\leq\int_X|Ks_\omega'(x)|^{h_n}dm_n\\
 & \leq K^{h}\int_X|s_\omega'(x)|^{h_n}dm_n=K^{h}m_n(s_\omega(X)).
\end{array}$
\end{center}

Now for any closed measurable $A\subset J_n$, let 
\begin{center}
$B_k=\{\omega\in\mathbb{N}_n^k: s_\omega(X)\cap A\neq\emptyset\}$. 
\end{center}
Note that the decreasing sequence $\{\cup_{\omega\in B_k} s_\omega(X)\}_{k=1}^\infty$ tends to $A$ as $k\rightarrow\infty$, that is,
\begin{equation}\label{eq30}
\cap_{k=1}^\infty\cup_{\omega\in B_k}s_\omega(X)=A.
\end{equation}
To see this, if $x\in A$, let 
\begin{center}
$\omega=\pi^{-1}(x)\in \mathbb{N}^\infty$.
\end{center} 
Obviously $\omega|_k\in B_k$ for any $k\in\mathbb{N}$, so the right set in Equation (\ref{eq30}) is contained in the left set. Now if $x\in \cap_{k=1}^\infty\cup_{\omega\in B_k}s_\omega(X)$, then we can find $\omega_x\in \mathbb{N}^\infty$ such that $s_{\omega_x|_k}(X)$ contains at least one point $x_k\in A$. Obviously we have
\begin{center}
$\lim_{k\rightarrow\infty} x_k=x$.
\end{center}
Since $A$ is closed we have $x\in A$.   
Then we have
\begin{center}
$\begin{array}{ll}
m(A) & =\lim_{k\rightarrow\infty} m(\cup_{\omega\in B_k}s_\omega(X))=\lim_{k\rightarrow\infty} \Sigma_{\omega\in B_k}m(s_\omega(X))\\
 & \leq \lim_{k\rightarrow\infty} \Sigma_{\omega\in B_k}K^{h}m_n(s_\omega(X))=K^{h}\lim_{k\rightarrow\infty} m_n(\cup_{\omega\in B_k}s_\omega(X))=K^{h}m_n(A).
\end{array}$
\end{center}
Since both measures are regular, the inequality extends to any measurable set $A$.
 \end{proof}

The inequality (\ref{eq19}) suggests $m$ is absolutely continuous with respect to $m_n$ for any $n\in\mathbb{N}$ on $J_n$. It has ever been puzzled to the author whether they two are equivalent to each other on $J_n$, although it is visible if one knows \cite[P153, Remarks(1)]{Wal}. It turns out that they are completely singular with respect to each other, as implied in the proof of \cite[Theorem 4.3.10]{MU3}.

\begin{theorem}[Mauldin-Urba\'nski]\label{thm10}
For any subset $\tilde{I}\subset I$, consider the sub-CIFS 
\begin{center}
$S_{\tilde{I}}=\{s_i: X\rightarrow X\}_{i\in\tilde{I}}$.  
\end{center}
Let $m_{\tilde{I}}$ and $m_{\tilde{I}}^*$ be the corresponding conformal and ergodic measures on the limit set $J_{\tilde{I}}$ in case of their existence. Now if $\tilde{I}\subsetneq I$, then
\begin{center}
$m(J_{\tilde{I}})=m^*(J_{\tilde{I}})=0$,
\end{center} 
which implies $m_{\tilde{I}}$ (or $m_{\tilde{I}}^*$) is singular with respect to $m$ (or $m^*$).
\end{theorem}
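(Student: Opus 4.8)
The plan is to show that the conformal measure $m$ of the infinite system $S_I$ gives zero mass to the limit set $J_{\tilde I}$ of any strictly smaller sub-system. Once this is established, the singularity of $m_{\tilde I}$ (respectively $m^*_{\tilde I}$) with respect to $m$ (respectively $m^*$) is immediate: the measure $m_{\tilde I}$ is supported on $J_{\tilde I}$ (by definition of a conformal measure on the sub-system), so it is carried by a set of $m$-measure zero, while $m$ is carried by $J\setminus J_{\tilde I}$ on which $m_{\tilde I}$ vanishes; and since $m^*$ is equivalent to $m$ and $m^*_{\tilde I}$ is equivalent to $m_{\tilde I}$, the same holds for the starred measures. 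So the heart of the matter is the identity $m(J_{\tilde I})=0$.

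First I would reduce to the case where $\tilde I$ is cofinite, in fact to $\tilde I = I\setminus\{j\}$ for a single index $j\notin\tilde I$; since $J_{\tilde I}\subset J_{I\setminus\{j\}}$ for any $j\in I\setminus\tilde I$ (as $\tilde I\subsetneq I$ guarantees such a $j$ exists), it suffices to prove $m(J_{I\setminus\{j\}})=0$. Next I would use the dynamical self-similarity coming from the $h$-conformality relation (\ref{eq4}), $m(s_i(A))=\int_A|s_i'|^h\,dm$, together with the fact (Theorem \ref{thm20}(d)) that $m$-almost every point of $J$ has a unique coding $\pi^{-1}(x)$, so that the map $T(x)=s_{\omega_1}^{-1}(x)$ is well-defined $m$-a.e.\ and the diagram relating $(J,T,m)$ to the shift is commutative up to the equivalence $m\sim m^*$. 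The key observation is that a point $x=\pi(\omega)\in J_{I\setminus\{j\}}$ has a coding $\omega$ in which the symbol $j$ never appears, i.e.\ $\omega\in (I\setminus\{j\})^\infty$. Equivalently, writing $C_j=\pi([j])=s_j(X)\cap J$ for the first-level cylinder of symbol $j$, we have
\begin{center}
$J_{I\setminus\{j\}}\subset\bigcap_{n\geq 0}T^{-n}\big(J\setminus C_j\big)$
\end{center}
up to an $m$-null set. So I would estimate $m(C_j)$ and then run a Borel--Cantelli / ergodicity argument to conclude that $m$-a.e.\ point visits $C_j$ under iteration of $T$, forcing $m(J_{I\setminus\{j\}})=0$.

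The cleanest route for the last step is via the ergodic measure $m^*$ equivalent to $m$ (Theorem \ref{thm20}(c)): since $m^*$ is $T$-ergodic and $m^*(C_j)>0$ (because $C_j=s_j(X)\cap J$ contains an open piece of $J$, or more directly because $\mu^*([j])>0$ as $\mu^*$ is a fully supported equilibrium-type measure and $m^*$ is its projection), Poincar\'e recurrence together with ergodicity gives that $m^*$-a.e.\ point enters $C_j$ infinitely often, hence leaves the set $\bigcap_n T^{-n}(J\setminus C_j)$; therefore $m^*(J_{I\setminus\{j\}})=0$, and by equivalence $m(J_{I\setminus\{j\}})=0$. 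Combining with the reduction of the first paragraph yields $m(J_{\tilde I})=m^*(J_{\tilde I})=0$ for every $\tilde I\subsetneq I$.

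\textbf{Main obstacle.} The delicate point is the passage from "$x\in J_{\tilde I}$" to a \emph{genuine coding} of $x$ avoiding the excluded symbols, together with controlling the null sets: the inclusion $J_{I\setminus\{j\}}\subset\bigcap_n T^{-n}(J\setminus C_j)$ is only clean up to the $m$-null set of points with non-unique codings, and one must be careful that $T$ is only $m$-a.e.\ defined. One also needs $m^*(C_j)>0$, which ultimately rests on the fact that in a regular CIFS the equilibrium measure $\mu^*$ charges every cylinder $[\omega]$ positively; if one wants to avoid invoking that, the alternative is to estimate $m(C_j)=\int_X|s_j'|^h\,dm>0$ directly from $h$-conformality, which is positive since $|s_j'|>0$ and $m$ is a probability measure. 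This is essentially the argument hidden inside the proof of \cite[Theorem 4.3.10]{MU3}, and I would follow that template, being explicit about where the unique-coding hypothesis (Theorem \ref{thm20}(d)) and the ergodicity of $m^*$ (Theorem \ref{thm20}(c)) are used.
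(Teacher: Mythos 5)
Your proposal is correct and takes essentially the same route as the paper: both arguments pass to the symbolic space and combine the ergodicity of the equivalent invariant measure $\mu^*$, the shift-invariance of the set of codings avoiding an excluded symbol $j$, and the positivity $\mu^*([j])>0$ (coming from $\mu([j])=\int_X|s_j'|^h\,dm>0$ and equivalence) to conclude. The paper phrases the ergodicity step as ``a shift-invariant set of positive measure has full measure, contradicting $\mu^*([j])>0$'' and works with a general $\tilde{I}\subsetneq I$ directly, whereas you phrase it via recurrence after reducing to a single excluded index; these differences are cosmetic.
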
  
\begin{proof}
We can work in the symbolic space. Assume by contradiction that $m(J_{\tilde{I}})>0$ (or equivalently $m^*(J_{\tilde{I}})>0$). Then $\mu(\tilde{I}^\infty)>0$ considering the projective set. According to Theorem \ref{thm20} (c), we also have
\begin{center}
$\mu^*(\tilde{I}^\infty)>0$.
\end{center} 
Since $\mu^*$ is ergodic and $\sigma(\tilde{I}^\infty)=\tilde{I}^\infty$, so we get
\begin{equation}\label{eq31}
\mu^*(\tilde{I}^\infty)=1.
\end{equation}
Now choose $j\in I\setminus \tilde{I}$, since $\mu([j])=\int_X |s_j'(x)|^hdm>0$ and $\mu$ is equivalent to $\mu^*$, we have
\begin{equation}\label{eq32}
\mu^*([j])>0
\end{equation}
Then (\ref{eq31}) and (\ref{eq32}) contradict each other as $[j]\cap \tilde{I}=\emptyset$, which denies the assumption. 
\end{proof}

Considering the TV topology on the space of measures $\mathcal{M}(J)$, this implies the following result.
\begin{corollary}\label{cor9}
Any of the four sequences of measures 
\begin{center}
$\{m_n\}_{n=0}^\infty$, $\{m_n^*\}_{n=0}^\infty$, $\{\mu_n\}_{n=0}^\infty$
and $\{\mu_n^*\}_{n=0}^\infty$
\end{center}
does not converge under the TV topology on $\mathcal{M}(J)$.
\end{corollary}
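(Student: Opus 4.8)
The plan is to derive the non-convergence under the TV topology directly from Theorem \ref{thm10} (the singularity result of Mauldin-Urba\'nski). The key observation is that TV-convergence of a sequence of probability measures forces a kind of mutual "overlap" that mutual singularity rules out. Concretely, for two probability measures $\mu$ and $\nu$ on the same space one has the well-known identity $\|\mu-\nu\|_{TV}=1-\mu\wedge\nu(X)$ (up to the normalization convention for the TV metric used here, where $\|\mu-\nu\|_{TV}=\sup_A|\mu(A)-\nu(A)|$ and this supremum equals $1-(\mu\wedge\nu)(X)$ for probability measures). In particular, if $\mu$ and $\nu$ are mutually singular, then $\|\mu-\nu\|_{TV}=1$. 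So it suffices to show that any two distinct members of each of the four sequences are mutually singular, and more importantly that each member is mutually singular with any candidate limit.

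First I would record the elementary metric fact: for probability measures $\varrho_1,\varrho_2$ that are mutually singular, $\|\varrho_1-\varrho_2\|_{TV}=1$; pick a set $A$ with $\varrho_1(A)=1$, $\varrho_2(A)=0$, then $|\varrho_1(A)-\varrho_2(A)|=1$. Next, I would apply Theorem \ref{thm10} with $\tilde I=\mathbb{N}_n$: since $\mathbb{N}_n\subsetneq\mathbb{N}$, the measure $m_n$ (supported on $J_n=J_{\mathbb{N}_n}$) satisfies $m(J_n)=m^*(J_n)=0$, and moreover for $n<n'$ applying the theorem within the larger system $S_{n'}$ (whose index set $\mathbb{N}_{n'}$ strictly contains $\mathbb{N}_n$) gives $m_{n'}(J_n)=m_{n'}^*(J_n)=0$. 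Hence $m_n\perp m_{n'}$ for all $n\neq n'$: the set $J_n$ carries full $m_n$-mass and zero $m_{n'}$-mass. The same argument transported to the symbolic space via $\pi$ (using Theorem \ref{thm20}(c), which says $m^*$ corresponds to $\mu^*$ and $m$ to $\mu$) gives $\mu_n\perp\mu_{n'}$ and $\mu_n^*\perp\mu_{n'}^*$, and likewise $m_n\perp m_n^*$ is not needed but $m_n^*\perp m_{n'}^*$ follows the same way. Therefore every pair of distinct terms in each of the four sequences is at TV-distance exactly $1$, so none of the sequences is Cauchy in the TV metric, hence none converges.

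The one subtlety worth spelling out — and the main (mild) obstacle — is that one must be careful that Theorem \ref{thm10} as stated compares a sub-system measure with the \emph{ambient} system's measure; to compare $m_n$ with $m_{n'}$ for finite $n<n'$ one invokes the theorem with the role of the "ambient" CIFS played by the finite regular CIFS $S_{n'}$ and the "sub-CIFS" being $S_n$ (noting $S_n$ is a genuine proper sub-system of $S_{n'}$, and both are regular so all relevant conformal and ergodic measures exist). Once this is in place, the conclusion is immediate: a sequence at constant mutual TV-distance $1$ cannot converge in $(\mathcal{M}(J),\|\cdot\|_{TV})$, since convergence would force the tail to be TV-Cauchy. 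I would also remark that this is precisely the phenomenon foreshadowed in Remark \ref{rem2} and Lemma \ref{lem2} — these sequences do converge setwisely (which is what makes Theorem \ref{thm1} and Theorem \ref{thm4} applicable to them in Section \ref{sec6}), but the singularity obstruction blocks any upgrade to TV convergence.
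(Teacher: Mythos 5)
Your proof is correct and follows essentially the same route as the paper, which likewise deduces $\Vert m_{n_1}-m_{n_2}\Vert_{TV}=1$ for $n_1<n_2$ from the mutual-singularity result of Theorem \ref{thm10} and concludes that none of the sequences can be TV-Cauchy. The only difference is that you helpfully make explicit a point the paper leaves implicit, namely that Theorem \ref{thm10} must be applied with the finite system $S_{n_2}$ playing the role of the ambient CIFS and $S_{n_1}$ as its proper sub-system.
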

\begin{proof}
This is because  for any two integers $n_1<n_2\in\mathbb{N}$,  we have 
\begin{center}
$\Vert m_{n_1}-m_{n_2}\Vert_{TV}=1$,
\end{center}
according to Theorem \ref{thm10}.
\end{proof}

\begin{rem}
The conclusion can be checked through direct computations in some cases. For example,  let $S=\{s_i: X\rightarrow X\}_{i\in \mathbb{N}}$ be a regular CIFS with  $s_i$ being a \emph{similitude} (\cite[Definition 3.9]{Kae})) for any $i\in\mathbb{N}$, that is,
\begin{center}
$s_i'(x)=a_i$
\end{center}
for some $ 0<a_i<1$ and any $i\in\mathbb{N}$. Let $h_{n_2}=HD(J_{n_2})$. One can get that
\begin{center}
$
\begin{array}{lll}
m_{n_2}(J_{n_2}\setminus J_{n_1}) & =& \sum_{i=n_1+1}^{n_2} a_i^{h_{n_2}}+(1-\sum_{i=n_1+1}^{n_2} a_i^{h_{n_2}})\sum_{i=n_1+1}^{n_2} a_i^{h_{n_2}}\\
 & & +(1-\sum_{i=n_1+1}^{n_2} a_i^{h_{n_2}})^2\sum_{i=n_1+1}^{n_2} a_i^{h_{n_2}}+\cdots \\
 & = & 1,
\end{array}
$
\end{center}
and hence $m_{n_2}(J_{n_1})=0$ for any $n_1<n_2$.
\end{rem}

Corollary \ref{cor9} is a depressing result to us. However, luckily, when we switch to the setwise topology on $\mathcal{M}(J)$, there is some nice change happening.  The following theorem  strengthens the conclusion Theorem \ref{thm20} (f) in \cite{MU1}. 

\begin{theorem}\label{lem2}
Under the setwise topology on $\mathcal{M}(J)$, we have
\begin{center}
$m_n\stackrel{s}{\rightarrow}m$ as $n\rightarrow\infty$.
\end{center}
\end{theorem}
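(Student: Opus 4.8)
\textbf{Proof proposal for Theorem \ref{lem2}.}

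The plan is to upgrade the known weak convergence $m_n \stackrel{w}{\rightarrow} m$ from Theorem \ref{thm20} (f) to setwise convergence. The natural strategy is to first establish setwise convergence on the algebra generated by the projective cylinder sets $\{s_\omega(X) : \omega \in I^*\}$, and then extend to the full $\sigma$-algebra $\mathcal{B}_J$ by a monotone class / approximation argument, using regularity of the measures (exactly as in the proof of Lemma \ref{lem4}). So first I would fix $\omega \in \mathbb{N}^k$ and ask whether $m_n(s_\omega(X)) \to m(s_\omega(X))$. For $n$ large enough that all the symbols of $\omega$ lie in $\mathbb{N}_n$, we have $m_n(s_\omega(X)) = \int_X |s_\omega'(x)|^{h_n}\, dm_n$ and $m(s_\omega(X)) = \int_X |s_\omega'(x)|^{h}\, dm$; since $h_n \to h$ by (\ref{eq5}) and the integrands are uniformly controlled by BDP (the derivative $|s_\omega'|$ is bounded away from $0$ and $\infty$ on $X$ up to the distortion constant $K$), the main task is to compare $m_n$ and $m$ on a fixed cylinder, which is where Lemma \ref{lem4} and a matching lower bound enter.

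The key quantitative input I would assemble is a two-sided comparison: Lemma \ref{lem4} already gives $m(A) \leq K^h m_n(A)$ for measurable $A \subset J_n$, and I would prove the reverse-direction estimate controlling $m_n$ from above in terms of $m$ on cylinder sets, together with the estimate that the "tail" $m_n(J_n \setminus J_N)$ and $m(J \setminus J_N)$ are uniformly small for $N$ large, uniformly in $n \geq N$. The tail smallness should follow from Theorem \ref{thm20} (a) (namely $\sum_{i} \|s_i'\|^d \leq K^d$ and $\mathrm{diam}(s_i(X)) \to 0$) combined with the conformality relations $m_n(s_i(X)) = \int |s_i'|^{h_n} dm_n$ and $h_n \geq h_N \geq$ some fixed positive number; since $h_n$ is bounded above by $d$ and bounded below by $h_1 > 0$, the sums $\sum_{i > N} \|s_i'\|^{h_n}$ are dominated by a convergent series independent of $n$. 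This gives: for any $\varepsilon > 0$ there is $N$ with $m_n(J_n \setminus J_N) < \varepsilon$ for all $n \geq N$ and $m(J \setminus J_N) < \varepsilon$.

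With these pieces, for an arbitrary $A \in \mathcal{B}_J$ I would split $A = (A \cap J_N) \cup (A \setminus J_N)$, handle the second piece by the tail bound (contributing at most $2\varepsilon$ to $|m_n(A) - m(A)|$ for $n \geq N$), and on $A \cap J_N \subset J_N$ approximate by a finite union of cylinders $s_\omega(X)$, $\omega \in \mathbb{N}_N^k$, using regularity so that the symmetric-difference error is small with respect to both $m_N$-type bounds and $m$; on each such finite cylinder union, convergence follows from the cylinder-wise limit established in the first step. Letting $n \to \infty$ and then $\varepsilon \to 0$ yields $m_n(A) \to m(A)$ for every $A \in \mathcal{B}_J$, which is precisely setwise convergence. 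The main obstacle I anticipate is the uniform tail control: one must be careful that the comparison constants (powers of $K$, and the exponents $h_n$) do not degrade as $n \to \infty$, so the argument really hinges on the uniform lower bound $h_n \geq h_1 > 0$ and on Theorem \ref{thm20} (a) giving a convergent dominating series for $\sum_i \|s_i'\|^{h_n}$ independent of $n$. Once that uniformity is in hand, the passage from cylinders to general Borel sets is routine regularity bookkeeping of the same flavor as in Lemma \ref{lem4}.
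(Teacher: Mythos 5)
Your extension step hinges on a claim that is false: that for every $\varepsilon>0$ there is an $N$ with $m_n(J_n\setminus J_N)<\varepsilon$ for all $n\geq N$. By Theorem \ref{thm10}, applied to the finite system $S_n$ with proper subsystem $S_N$ ($N<n$), the conformal measure $m_n$ gives \emph{zero} mass to $J_N$; hence $m_n(J_n\setminus J_N)=1$ for every $n>N$ (see also the explicit computation for similitudes in the remark after Corollary \ref{cor9}). Your summability heuristic only controls the first-level tail $m_n\big(\cup_{i>N}\pi([i])\big)$, whereas $J_n\setminus J_N$ also contains every point whose symbolic expansion uses a symbol larger than $N$ in \emph{some} coordinate, and $m_n$-almost every point does. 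So the decomposition $A=(A\cap J_N)\cup(A\setminus J_N)$ puts all of the $m_n$-mass ($n>N$) into the piece you intended to discard and none into the piece you intended to approximate by cylinders. The companion ingredient, a reverse of Lemma \ref{lem4} of the form $m_n(A)\leq C\,m(A)$ (or $\leq C\,m_N(A)$) on $J_N$, is equally unavailable: these measures are mutually singular (Theorem \ref{thm10}, Corollary \ref{cor9}), and even on cylinder sets the best one gets is $m_n(s_\omega(X))/m(s_\omega(X))\leq K^h\Vert s_\omega'\Vert^{h_n-h}$, which blows up as $|\omega|\to\infty$ whenever $h_n<h$.

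Your first step also needs a different mechanism: a two-sided comparison with a fixed constant $K^h$ can never produce the exact limit $m_n(s_\omega(X))\to m(s_\omega(X))$. What does the work in the paper is the weak convergence of Theorem \ref{thm20} (f) tested against the continuous bounded integrand $|s_\omega'|^{h}$, combined with the uniform convergence $|s_\omega'|^{h_n}\to|s_\omega'|^{h}$ furnished by (\ref{eq5}) and BDP, organized as an induction on the level of the cylinder. Once cylinder-wise convergence is in hand, the passage to all Borel sets is indeed the delicate point (convergence on a generating class does not imply setwise convergence in general, cf.\ Example \ref{exm4}); the paper resolves it by the closed-set approximation replaying the end of the proof of Lemma \ref{lem4} together with regularity, or by invoking \cite[Theorem 2.3]{FKZ} -- not by any $J_N$-truncation. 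The correct tightness-type statement in the spirit of what you wanted concerns the first-level tails $\cup_{i>N}\pi([i])$, as in Lemma \ref{lem13} for the CGDMS analogue, and it cannot be upgraded to a statement about $J_n\setminus J_N$.
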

\begin{proof}
We first show that 
\begin{equation}\label{eq24}
\lim_{n\rightarrow\infty} m_n(A)=m(A)
\end{equation}
for  $A$ being a cylinder set. We do  this by induction on the level of the cylinder sets. First we establish the initiative step. Since $X$ is compact and $s_i(x)$ is $C^{1+\theta}$ for any $i\in\mathbb{N}$, due to BDP, (\ref{eq5}) and Theorem \ref{thm20} (f), we claim that
\begin{equation}
\begin{array}{ll}\label{eq33}
\lim_{n\rightarrow\infty}\mu_n([i]) & =\lim_{n\rightarrow\infty} \int_X |s'_i(x)|^{h_n}dm_n=\lim_{n\rightarrow\infty} \int_X |s'_i(x)|^{h}dm_n\\
 & =\int_X |s'_i(x)|^{h}dm=\mu([i])
\end{array}
\end{equation}
for any $i\in\mathbb{N}$. 

To see this, first, due to BDP and (\ref{eq5}), the sequence of functions 
\begin{center}
$|s'_{i}(x)|^{h_n}\rightarrow |s'_{i}(x)|^h$
\end{center}
uniformly as $n\rightarrow\infty$ on whole $X$, that is, for any small $\epsilon>0$, there exists $N_1\in\mathbb{N}$ large enough, such that 
\begin{center}
$0<|s'_{i}(x)|^{h_n}-|s'_{i}(x)|^h<\cfrac{\epsilon}{2}$
\end{center}
for any $n\geq N_1$. By integration with respect to $m_n$, we have
\begin{center}
$0<\int_X |s'_{i}(x)|^{h_n}dm_n-\int_X |s'_{i}(x)|^hdm_n<\cfrac{\epsilon}{2}$
\end{center}
for any $n\geq N_1$. According to Theorem \ref{thm20} (f), there exists $N_2\in\mathbb{N}$ large enough, such that 
\begin{center}
$|\int_X |s'_{i}(x)|^h dm_n-\int_X |s'_{i}(x)|^hdm|<\cfrac{\epsilon}{2}$
\end{center}
for any $n\geq N_2$. Now let $N=\max\{N_1, N_2\}$. Then for any $n\geq N$, we have
\begin{center}
$
\begin{array}{ll}
& |\mu_n([i])-\mu([i])| \\
= & |\int_X |s'_{i}(x)|^{h_n} dm_n-\int_X |s'_{i}(x)|^hdm| \\
  \leq  &  |\int_X |s'_{i}(x)|^{h_n}dm_n-\int_X |s'_{i}(x)|^hdm_n|+|\int_X |s'_{i}(x)|^h dm_n-\int_X |s'_{i}(x)|^hdm| \\
 <   & \epsilon,
\end{array}
$
\end{center}
which justifies (\ref{eq33}). 

Now suppose (\ref{eq24}) holds on any level $k-1$ cylinder set for $k\geq 2$, we will show that it is still true on any level $k$ cylinder set. For a $k$-word $\omega=\omega_1\omega_2\cdots\omega_k\in\mathbb{N}^k$, set $B_\omega=\pi([\omega])$. Then
\begin{center}
$m_n(B_\omega)=\mu_n([\omega])=\int_{\pi([\omega_2\omega_3\cdots \omega_k])}|s'_{\omega_1}(x)|^{h_n}dm_n $
\end{center}
for $n$ large enough. In the following we always assume $n$ is large enough. Note that
\begin{center}
$m(B_\omega)=\mu([\omega])=\int_{\pi([\omega_2\omega_3\cdots \omega_k])}|s'_{\omega_1}(x)|^{h}dm$.
\end{center}
Again due to BDP, (\ref{eq5}) and Theorem \ref{thm20} (f), we have
\begin{equation}\label{eq26}
 \lim_{n\rightarrow\infty} \int_X |s'_{\omega_1}(x)|^{h_n}dm_n= \lim_{n\rightarrow\infty} \int_X |s'_{\omega_1}(x)|^{h}dm_n=\int_X |s'_{\omega_1}(x)|^{h}dm.
\end{equation}
Now by the inductive assumption on $k-1$ level cylinder sets, we have
\begin{equation}\label{eq27}
\begin{array}{ll}
\lim_{n\rightarrow\infty}\mu_n([\omega_2\omega_3\cdots \omega_k]) & =\lim_{n\rightarrow\infty} \int_X 1_{\pi([\omega_2\omega_3\cdots \omega_k])}dm_n \\
 & = \mu([\omega_2\omega_3\cdots\omega_k]) \\
 & = \int_X 1_{\pi([\omega_2\omega_3\cdots\omega_k])}dm.
\end{array}
\end{equation}
Combining (\ref{eq26}) and (\ref{eq27}) together, by an $\epsilon-N$ argument, we have
\begin{center}
$
\begin{array}{ll}
& \lim_{n\rightarrow\infty} \int_X |s'_{\omega_1}(x)|^{h_n}\cdot 1_{\pi([\omega_2\omega_3\cdots \omega_k])} dm_n \\
= & \lim_{n\rightarrow\infty} \int_X |s'_{\omega_1}(x)|^{h}\cdot 1_{\pi([\omega_2\omega_3\cdots \omega_k])}dm_n \\
= & \int_X |s'_{\omega_1}(x)|^{h}\cdot 1_{\pi([\omega_2\omega_3\cdots \omega_k])}dm,
\end{array}
$
\end{center}
that is,
\begin{center}
$\lim_{n\rightarrow\infty}\mu_n([\omega_1\omega_2\cdots \omega_k]) =\mu([\omega_1\omega_2\cdots \omega_k])$,
\end{center} 
which completes the inductive step. 

At last, by replaying the argument at the end of proof of Lemma \ref{lem4}, we can first extend (\ref{eq24}) from cylinder sets to closed measurable ones, and then to all measurable  ones via regularity of the measures, or we can directly reach the conclusion in virtue of \cite[Theorem 2.3]{FKZ}. 
\end{proof}

Now we are in a position to prove Theorem \ref{thm3}.\\

Proof of Theorem \ref{thm3}:
\begin{proof}
By Theorem \ref{thm1}, Theorem \ref{thm4} and Theorem \ref{lem2}, we have

\begin{equation}\label{eq34}
\limsup_{n\rightarrow\infty} dim^0_H m_n\leq dim^0_H m\leq dim^1_H m\leq \liminf_{n\rightarrow\infty} dim_H^1 m_n.
\end{equation}
Considering the corresponding equivalent ergodic measures $m_n^*$ and $m^*$, due to  Corollary \ref{cor2} and \cite[Lemma 3.1(3)]{HS}, we have
\begin{equation}\label{eq35}
dim_H^0 m_n= dim_H^0 m_n^*=dim_H^1 m_n= dim_H^1 m_n^*
\end{equation}
and 
\begin{equation}\label{eq36}
dim_H^0 m= dim_H^0 m^*=dim_H^1 m=dim_H^1 m^*.
\end{equation}

Finally, Combining all the  formulas (\ref{eq34}) (\ref{eq35}) (\ref{eq36}) and (\ref{eq5}),  we justify all the equalities in (\ref{eq42}), except the last one. For the last equality in (\ref{eq42}), see Remark \ref{rem2}.

\end{proof}

For two infinite words $\omega,\omega'\in\mathbb{N}^\infty$, let $k$ be the least integer such that $\omega_k\neq \omega'_k$ in case of $\omega\neq\omega'$. Define the \emph{comparison metric} $\rho_c$ on $\mathbb{N}^\infty$ by

\begin{equation}\label{eq38}
\rho_c(\omega,\omega')=\left\{
\begin{array}{ll}
e^{1-k} & \mbox{ if } \omega\neq\omega',\\
0 & \mbox{ if } \omega=\omega'.\\
\end{array}
\right.
\end{equation}
Then  $(\mathbb{N}^\infty, \rho_c)$ is a compact metric space of diameter $1$.  By similar method as to dealing with the sequence of measures $\{m_n\}_{n=1}^\infty$, considering Corollary \ref{cor2} and \cite[Lemma 3.1(3)]{HS}, we can show the following result.
\begin{proposition}
Under the above notations, we have
\begin{center}
$\mu_n\stackrel{s}{\rightarrow}\mu$ as $n\rightarrow\infty$ on $\mathcal{M}(\mathbb{N}^\infty)$
\end{center}
and
\begin{center}
$
\begin{array}{ll}
& dim_H^0 \mu_n=dim_H^0 \mu_n^*=dim_H^1 \mu_n=dim_H^1 \mu^*_n \\
\rightarrow & dim_H^0 \mu=dim_H^0 \mu^*=dim_H^1 \mu=dim_H^1 \mu^*
\end{array}
$
\end{center}
as $n\rightarrow\infty$.
\end{proposition}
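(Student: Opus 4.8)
The plan is to mimic the proof structure already used for the sequence $\{m_n\}_{n=1}^\infty$ on $\mathcal{M}(J)$, transferring it to the symbolic side $\mathcal{M}(\mathbb{N}^\infty)$ equipped with the comparison metric $\rho_c$. First I would establish the setwise convergence $\mu_n\stackrel{s}{\rightarrow}\mu$. Since $\mathbb{N}^\infty$ is a metric space whose Borel $\sigma$-algebra is generated by the cylinder sets $\{[\omega]:\omega\in\mathbb{N}^*\}$ (these form a $\pi$-system), by \cite[Theorem 2.3]{FKZ} it suffices to check that $\lim_{n\to\infty}\mu_n([\omega])=\mu([\omega])$ for every $\omega\in\mathbb{N}^*$. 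But this is exactly formula (\ref{eq24}) proved inside Theorem \ref{lem2}: there we showed $\lim_{n\to\infty}\mu_n([\omega])=\mu([\omega])$ for every cylinder $[\omega]$ by induction on the level, using BDP, (\ref{eq5}) and Theorem \ref{thm20}(f). So the first half of the proposition is essentially a restatement of the intermediate step of Theorem \ref{lem2}'s proof, and I would just cite it rather than redo the $\epsilon$--$N$ argument.

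Next I would invoke Theorem \ref{thm1} and Theorem \ref{thm4} (lower semi-continuity of $dim_H^1$ and upper semi-continuity of $dim_H^0$ under the setwise topology, valid on any metric space, here $(\mathbb{N}^\infty,\rho_c)$) applied to $\mu_n\stackrel{s}{\rightarrow}\mu$, obtaining
\begin{equation*}
\limsup_{n\to\infty} dim_H^0 \mu_n \leq dim_H^0\mu \leq dim_H^1\mu \leq \liminf_{n\to\infty} dim_H^1\mu_n,
\end{equation*}
exactly as in (\ref{eq34}). Then, because $\mu_n^*$ is equivalent to $\mu_n$ and $\mu^*$ is equivalent to $\mu$, Corollary \ref{cor2} together with \cite[Lemma 3.1(3)]{HS} gives $dim_H^0\mu_n=dim_H^0\mu_n^*=dim_H^1\mu_n=dim_H^1\mu_n^*$ for every $n$ and likewise $dim_H^0\mu=dim_H^0\mu^*=dim_H^1\mu=dim_H^1\mu^*$ — here one needs that $\mu_n^*$ (resp. $\mu^*$) is ergodic with respect to the shift $\sigma$, which is not inverse-dimension-expanding on $(\mathbb{N}^\infty,\rho_c)$ (cylinders scale predictably under $\sigma$), so Young's Lemma applies to collapse $dim_H^0$ and $dim_H^1$ on the ergodic measures. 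Chaining these equalities with the squeeze above forces all eight quantities to coincide and the two extreme $\liminf/\limsup$ to be a genuine limit, yielding the displayed convergence.

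The one point that requires a little care — and the place I expect the only real friction — is the verification that the ambient metric $\rho_c$ on $\mathbb{N}^\infty$ is compatible with everything: that $\sigma$ is not inverse-dimension-expanding with respect to $\rho_c$ (so Young's Lemma is legitimately available), that the Hausdorff dimensions $dim_H^0\mu_n,dim_H^1\mu_n$ computed in $(\mathbb{N}^\infty,\rho_c)$ match the quantities $h_n$ appearing in Theorem \ref{thm3} (this is where one uses that $\pi$ is Lipschitz and, on the relevant full-measure sets, dimension-preserving, so the symbolic and geometric dimensions agree), and that the cylinders indeed generate $\mathscr{B}_{\mathbb{N}^\infty}$ and form a $\pi$-system so that \cite[Theorem 2.3]{FKZ} can be invoked to promote pointwise convergence on cylinders to setwise convergence. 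Once these compatibility remarks are in place — each of them routine given the BDP and the construction of $\rho_c$ — the proposition follows formally from Theorem \ref{thm1}, Theorem \ref{thm4}, Corollary \ref{cor2}, Young's Lemma and (\ref{eq5}), with no new estimates needed beyond those already carried out in the proof of Theorem \ref{lem2}.
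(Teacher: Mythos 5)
Your proposal is correct and follows essentially the same route as the paper, which disposes of this proposition in one line ("by similar method as to dealing with $\{m_n\}$, considering Corollary \ref{cor2} and \cite[Lemma 3.1(3)]{HS}"): reuse the cylinder-set convergence already established inside Theorem \ref{lem2}, extend it setwisely via \cite[Theorem 2.3]{FKZ}, and then squeeze with Theorem \ref{thm1}, Theorem \ref{thm4}, Corollary \ref{cor2} and Young's Lemma exactly as in the proof of Theorem \ref{thm3}. Your extra compatibility checks (that $\sigma$ is not inverse-dimension-expanding on $(\mathbb{N}^\infty,\rho_c)$, which holds since each branch $\omega\mapsto i\omega$ is a similarity and Hausdorff dimension is countably stable) are sound and, if anything, more careful than the paper's own treatment; the comparison with $h_n$ is not needed for the statement as written.
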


It is an interesting question to ask the correlation dimension and the modified correlation dimension  of the above measures. Considering Theorem \ref{thm5}, the strategy for dealing with $dim_H^0$ and $dim_H^1$ loses its power now. It turns out that for regular CIFS, the $h$-conformal measure $m$ and the corresponding ergodic measure $m^*$ have the same correlation dimension and the modified correlation dimension with their upper and lower Hausdorff dimension. 

\begin{theorem}\label{thm14}
For any regular CIFS $S=\{s_i: X\rightarrow X\}_{i\in I}$ with a countable index set $I$, we have
\begin{center}
$dim_C m=dim_C m^*=dim_{MC} m=dim_{MC} m^*=HD(J)=h$
\end{center}
with $J$ being the limit set of the system.
\end{theorem}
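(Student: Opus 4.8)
The plan is to read off both inequalities from Theorem \ref{thm3}, the general estimate $dim_H^0\nu\le dim_C\nu\le dim_{MC}\nu$, and the conformal cylinder structure. Since Theorem \ref{thm3} already gives $dim_H^0 m=dim_H^0 m^*=h$, the chain of inequalities immediately yields
\begin{center}
$h\le dim_C m\le dim_{MC} m$ and $h\le dim_C m^*\le dim_{MC} m^*$
\end{center}
(at the level of the upper and lower correlation dimensions as well). So the whole content is the reverse bound $dim_{MC} m\le h$: once $\limsup_{r\to 0}\frac{\log\int_A m(B(x,r))\,dm}{\log r}\le h$ is known for every Borel set $A$ of large enough measure, all four upper/lower correlation quantities of $m$ are squeezed to $h$, the defining limits exist, and equal $h$; and since $m$ and $m^*$ are equivalent, the absolute-continuity behaviour of the (modified) correlation dimension recorded in \cite[Theorem 2.10]{MMR} transfers the conclusion from $m$ to $m^*$.

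To prove $dim_{MC} m\le h$ I would fix $\varepsilon>0$ and a Borel set $A$ with $m(A)\ge 1-\delta$ (with $\delta=\delta(\varepsilon)$ chosen below) and bound $\int_A m(B(x,r))\,dm$ from below for small $r$. For $r>0$ let $\Lambda_r\subset\mathbb{N}^*$ be the antichain of shortest words $\tau$ with $diam(s_\tau(X))\le r$; the cylinders $\{s_\tau(X)\}_{\tau\in\Lambda_r}$ cover $J$, meet pairwise in $m$-null sets, and $\sum_{\tau\in\Lambda_r}m(s_\tau(X))=1$. Since two points of a common $s_\tau(X)$ are within distance $r$, Cauchy--Schwarz gives $\int_A m(B(x,r))\,dm\ge\sum_{\tau\in\Lambda_r}m(A\cap s_\tau(X))^2$. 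By the BDP, $m(s_\tau(X))\asymp\|s_\tau'\|^h\asymp diam(s_\tau(X))^h$, and writing $\tau=\tau^- j$ with $j$ its last symbol, $\|s_\tau'\|\asymp\|s_{\tau^-}'\|\,\|s_j'\|\gtrsim r\,\|s_j'\|$ because $diam(s_{\tau^-}(X))>r$. Hence the words $\tau\in\Lambda_r$ whose last symbol lies in the finite set $F=\{i\in\mathbb{N}:\|s_i'\|\ge\beta\}$ (finite by Theorem \ref{thm20}(a)) satisfy $m(s_\tau(X))\gtrsim(\beta r)^h$, so there are at most $O((\beta r)^{-h})$ of them; discarding the rest and applying Cauchy--Schwarz to the survivors yields
\begin{center}
$\int_A m(B(x,r))\,dm\ \gtrsim\ (\beta r)^h\big(m(A\cap G_r)\big)^2$, where $G_r=\bigcup\{s_\tau(X):\tau\in\Lambda_r,\ \tau\text{ ends in }F\}$.
\end{center}

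The remaining and decisive point is that $m(G_r)$ tends to $1$ as $\beta\downarrow 0$, uniformly in small $r$ — equivalently, that the probability measures $p^{(r)}$ on $\mathbb{N}$ with $p^{(r)}_j=m\{\pi(\omega):\omega\text{ first leaves scale }r\text{ at symbol }j\}$ are uniformly tight. Here I would use conformality in the form $m(s_{\rho j}(X))\le K^h m(s_j(X))\,m(s_\rho(X))$, together with the observation that for fixed $j$ the predecessors $\rho$ with $\rho j\in\Lambda_r$ form a tree of depth $O(\log(1/\|s_j'\|))$ whose level sets carry $m$-mass at most $1$, to obtain the scale-free estimate $p^{(r)}_j\lesssim m(s_j(X))\log(1/\|s_j'\|)\asymp\|s_j'\|^h\log(1/\|s_j'\|)$. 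Regularity enters here: $P(h)=0$ together with the BDP forces $\psi_1(h)=\sum_i\|s_i'\|^h\in[1,K^h]<\infty$ (since $\psi_n(h)\asymp\psi_1(h)^n$), and this summability is what lets one choose $F$ large enough that $\sum_{j\notin F}p^{(r)}_j<\delta$ for all small $r$; taking also $\delta<\tfrac14$ then gives $m(A\cap G_r)\ge m(A)-m(G_r^c)\ge 1-2\delta\ge\tfrac12$, so $\int_A m(B(x,r))\,dm\gtrsim(\beta r)^h$ and $\limsup_{r\to 0}\frac{\log\int_A m(B(x,r))\,dm}{\log r}\le h$. As $A$ was arbitrary of measure $\ge 1-\delta$, this gives $dim_{MC} m\le h$, and the theorem follows from the first paragraph. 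I expect the uniform-in-$r$ tightness of the stopping-symbol distributions $p^{(r)}$ to be the main obstacle; in the borderline case where $HD(J)$ coincides with the finiteness exponent $\theta_S$ the logarithmic gauge above must be replaced by a slower one matched to the scale, which requires an extra truncation argument but leaves the overall scheme intact.
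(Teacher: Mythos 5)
You lean on the chain ``$dim_H^0\nu\le dim_C\nu\le dim_{MC}\nu$'' quoted from Section 1, but that chain is stated backwards there: the inequality actually available from \cite{MMR} runs $dim_C\nu\le dim_{MC}\nu\le dim_H^0\nu$ (a Chebyshev--Borel--Cantelli argument turns $\int\nu(B(x,r))\,d\nu\le r^{\alpha}$ into $\underline{d}(\nu,x)\ge\alpha$ a.e., hence $dim_H^0\nu\ge\alpha$), and the paper's own Example \ref{exm7} — where $dim_C\nu=dim_{MC}\nu=0$ while $\nu\ll\mathfrak{L}^1$, so $dim_H^0\nu=1$ — rules out the direction you use. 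Consequently your ``free'' half evaporates, and note that your substantive computation, which produces $\int_A m(B(x,r))\,dm\gtrsim r^h$, again only yields $\limsup_{r\to0}\log\int_A m(B(x,r))\,dm/\log r\le h$, i.e.\ \emph{upper} bounds on the correlation quantities. Nowhere do you prove $dim_C m\ge h$; that requires the opposite estimate $\int m(B(x,r))\,dm\le Cr^h$, i.e.\ $m(B(x,r))\lesssim r^h$ off a set of mass $o(r^h)$. This is precisely the half of \cite[Lemma 3.14]{MU1} that the paper's proof imports: it picks $l=l(r)$ with $\sum_{i>l}\Vert s_i'\Vert^h=o(r^h)$, obtains the two-sided bound $C^{-1}\le m(B(x,r))/r^h\le C$ on the dominating set $\tilde{J}=\cup_{\omega\in\mathbb{N}_l^k}s_\omega(X)$, and absorbs $J\setminus\tilde{J}$, so that $\int m(B(x,r))\,dm\asymp r^h$ and both inequalities for $dim_C$ and $dim_{MC}$ fall out simultaneously.

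Your lower bound on the correlation integral is a genuinely different route from the paper's (a stopping-time antichain at scale $r$ plus Cauchy--Schwarz over the $O((\beta r)^{-h})$ cylinders whose last symbol has derivative at least $\beta$, versus the paper's perturbation of the finite-alphabet Ahlfors estimate), but the step you yourself flag as the main obstacle is a real gap: the bound $p^{(r)}_j\lesssim\Vert s_j'\Vert^h\log(1/\Vert s_j'\Vert)$ has summable right-hand side exactly when $\lambda_{\mu}(\sigma)<\infty$, i.e.\ in the finite-entropy regime already covered by the Mauldin--Urba\'nski Theorem; when $\sum_j\Vert s_j'\Vert^h\log(1/\Vert s_j'\Vert)=\infty$ — the exploding case that motivates the paper — the tail $\sum_{j\notin F}p^{(r)}_j$ is not controlled and the deferred ``extra truncation argument'' is where all the difficulty sits. (A smaller point: transferring $dim_C$ from $m$ to $m^*$ needs the bounded density hypothesis of \cite[Theorem 2.6]{MMR}, not mere equivalence, since $dim_C$ fails the absolute-continuity property; the density is indeed bounded here, but this should be said.) In sum, the lower bound $dim_C m\ge h$ is missing altogether, and the upper bound is left open in the infinite-entropy case.
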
  
\begin{proof}
In case of $\# I$ being finite, the conclusion follows from \cite[Lemma 3.14]{MU1} directly. In case of $\#I$ being infinite, we will continue to utilize the proof of \cite[Lemma 3.14]{MU1} by  playing a small trick on the limit set $J$. Without loss of generality suppose $I=\mathbb{N}$ now. First, according to Theorem \ref{thm20} (a), for a small fixed positive $r>0$, choose $l\in\mathbb{N}$ large enough such that 
\begin{center}
$\sum_{i> l}\Vert s_i\Vert^h=o(r^h)$ and $\gamma^l diam(X)<r$,
\end{center}
in which $diam(X)$ is the diameter of the space $X$. Let $k$ be the smallest integer such that
\begin{equation}\label{eq37}
\Vert s_\omega'\Vert<\cfrac{r}{2}
\end{equation}
for any $\omega\in\mathbb{N}_l^k$ of length $k$. Let $\tilde{J}=\cup_{\omega\in\mathbb{N}_l^k} s_\omega(X)$. Now split the limit set $J$ into the following two sets:
\begin{center}
$J=\tilde{J}\cup(J\setminus\tilde{J})$
\end{center}
The set $\tilde{J}$ dominates $J\setminus\tilde{J}=\cup_{i=1}^k \cup_{\omega\in\mathbb{N}^k, \omega_i>l} s_\omega(X)$ in the sense of the $h$-conformal measure $m$ as
\begin{center}
$m(J\setminus\tilde{J})\leq \sum_{j=0}^{k-1} \gamma^j\sum_{i>l}\Vert s_i\Vert^h\leq \cfrac{1}{1-\gamma}\sum_{i>l}\Vert s_i\Vert^h=o(r^h)$.
\end{center}

In the following we focus on the dominating set $\tilde{J}$. Now for any $x=\pi(\omega)\in\tilde{J}$, let 
\begin{center}
$\xi=\inf\{\vert s_i'\vert: 0\leq i\leq l\}>0$.
\end{center} 
Note that due to  (\ref{eq37}), the smallest integer $n$ such that $s_{\omega|_n}\subset B(x,r)$ must satisfy
\begin{center}
$n\leq k$.
\end{center}
Again due to  (\ref{eq37}), any minimal word $\omega\in \mathbb{N}_l^*$  needs to satisfy
\begin{center}
$|\omega|\leq k$.
\end{center}
So the proof of \cite[Lemma 3.14]{MU1} is applicable in this case, which guarantees that there exists $C>1$ such that
\begin{center}
$\cfrac{1}{C}\leq \cfrac{m(B(x,r))}{r^h}\leq C$
\end{center} 
for any $x\in \tilde{J}$ and $r$ small enough. So
\begin{center}
$C^{-1}r^h\leq \int_{X} m(B(x,r))dm=\int_{\tilde{J}} m(B(x,r))dm+\int_{J\setminus\tilde{J}} m(B(x,r))dm\leq Cr^h+o(r)$.
\end{center}
Taking logarithm, dividing by $\log r$ and then let $r\rightarrow 0$ in the above formula, we get 
\begin{center}
$dim_C m=dim_{MC} m=HD(J)=h$.
\end{center}
The equality on $dim_C m^*$ and $dim_{MC} m^*$ follows from \cite[Theorem 2.6, Theorem 2.10]{MMR} as well as equivalence of the two measures $m$ and $m^*$.
\end{proof}

\section{Applications to conformal graph directed Markov systems}\label{sec8}

In this section we apply our semi-continuity results of the measure dimension mappings $dim_H^0$ and $dim_H^1$ to the conformal graph directed Markov systems (CGDMS), to overcome the difficulty caused by the appearance of exploding entropy of concerning measures in this infinite IFS theory. 

We first recall some key techniques on judging the weak convergence of sequences of measures on metric spaces. In case that there are no obvious target limit measure for a sequence of probability measures $\{\nu_n\in\mathcal{M}(X)\}_{n=1}^\infty$ with $X$ being metrizable, the Prohorov’s Theorem is a very important tool to judge its weak convergence. Now we recall some definitions in \cite[Chapter 1, Section 5]{Bil1}.

\begin{definition}
A family of probability measures $\prod\subset \mathcal{M}(X)$ is said to be \emph{relatively compact} if  every sequence of measures from $\prod$ contains a weakly convergent subsequence. It is called \emph{tight} if for every $\epsilon>0$, there is a compact set $X_c\subset X$, such that 
\begin{center}
$\nu(X_c)>1-\epsilon$ 
\end{center}
for any $\nu\in\prod$. 
\end{definition}

The two notions are closely related. As one can expect, tightness of a family of measures is usually a property involving only co-finitely many members of the family, instead of all of them in case the topology of $X$ is well-behaved.

\begin{proposition}
Suppose $X$ is a separable and complete metric space. Then for a family of probability measures $\prod\subset \mathcal{M}(X)$, if for every $\epsilon>0$, there exists a compact set $X_\epsilon\subset X$, such that 
\begin{center}
$\nu(X_\epsilon)>1-\epsilon$ 
\end{center}
for all but finitely many $\nu\in\prod$, then the family $\prod$ is tight. 
\end{proposition}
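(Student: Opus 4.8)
The claim is that if a family $\prod$ of probability measures on a separable complete metric space $X$ satisfies: for every $\epsilon > 0$ there is a compact $X_\epsilon \subset X$ with $\nu(X_\epsilon) > 1 - \epsilon$ for all but finitely many $\nu \in \prod$, then $\prod$ is tight (i.e., the exceptional finite set can be absorbed). The plan is to reduce to the case of a single probability measure on $X$: a single Borel probability measure on a separable complete metric space is automatically tight. This is a standard fact (it follows from inner regularity of Borel measures on Polish spaces with respect to compact sets; see e.g. Billingsley, \cite[Theorem 1.3]{Bil1}), which I will invoke. Granting this, the proof is a short finite bookkeeping argument.

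First I would fix $\epsilon > 0$. By hypothesis there is a compact set $X_{\epsilon/2}$ with $\nu(X_{\epsilon/2}) > 1 - \epsilon/2$ for all $\nu \in \prod$ except for finitely many, say $\nu_1, \dots, \nu_m$. Next, for each $j = 1, \dots, m$, apply tightness of the single measure $\nu_j$: there is a compact set $K_j \subset X$ with $\nu_j(K_j) > 1 - \epsilon$. Now set
\begin{center}
$X_c = X_{\epsilon/2} \cup K_1 \cup \cdots \cup K_m$,
\end{center}
which is a finite union of compact sets, hence compact. Then for any $\nu \in \prod$: if $\nu \notin \{\nu_1,\dots,\nu_m\}$ we have $\nu(X_c) \geq \nu(X_{\epsilon/2}) > 1 - \epsilon/2 > 1 - \epsilon$; and if $\nu = \nu_j$ for some $j$ we have $\nu(X_c) \geq \nu_j(K_j) > 1 - \epsilon$. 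In either case $\nu(X_c) > 1 - \epsilon$, so $\prod$ is tight.

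The only nontrivial input is the tightness of an individual Borel probability measure on a separable complete metric space; everything else is the elementary observation that finitely many compact sets have compact union and that enlarging the set only increases measure. I expect no real obstacle here — the statement is essentially a packaging lemma whose content is entirely carried by the classical single-measure result, which I am entitled to assume (it is the $\prod = \{\nu\}$ case and is quoted as standard in \cite{Bil1}). If one wanted to avoid even that citation, one could prove the single-measure case directly: write $X$ as a countable union of balls of radius $1/k$ for each $k$, choose finitely many of them covering all but $\epsilon 2^{-k}$ of the mass, take the closure of the intersection over $k$ of these finite unions, and use completeness plus total boundedness to conclude it is compact; but since the paper is citing Billingsley throughout, invoking the known fact is the cleaner route.
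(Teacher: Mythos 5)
Your proof is correct and follows essentially the same route as the paper's: invoke tightness of each individual Borel probability measure on a Polish space (Billingsley, Theorem 1.3) to handle the finitely many exceptional measures, then take the finite union of the resulting compact sets with the given $X_\epsilon$. The only cosmetic difference is your use of $\epsilon/2$ for the initial compact set, which is not needed but does no harm.
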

\begin{proof}
Suppose that for a fixed $\epsilon>0$, we have
\begin{center}
$\nu(X_\epsilon)>1-\epsilon$ 
\end{center}
for any $\nu\in\prod\setminus\{\nu_1, \nu_2, \cdots, \nu_n\}$ with $\{\nu_1, \nu_2, \cdots, \nu_n\}\subset\prod$, $n\in\mathbb{N}$. Since $X$ is separable and complete, apply \cite[Theorem 1.3]{Bil1}, there exists a sequence of compact subsets $X_1, X_2, \cdots, X_n$ in $X$, such that 
\begin{center}
$\nu_i(X_i)>1-\epsilon$
\end{center}
for any $1\leq i\leq n$. Now let $X_c=X_\epsilon\cup_{1\leq i\leq n} X_i$. It is a compact subset of $X$ satisfying
\begin{center}
$\nu(X_c)>1-\epsilon$
\end{center}
for any $\nu\in\prod$.
\end{proof}

The Prohorov’s Theorem contains two conclusions converse to each other, see \cite[Theorem 5.1, Theorem 5.2]{Bil1}
\begin{Prohorov's Theorem}
Let $X$ be a metric space, then
\begin{enumerate}[(a).]
\item If a family of probability measures $\prod\subset \mathcal{M}(X)$ is tight, then it is relatively compact.
\item In case of $X$ being separable and complete, if a family of probability measures $\prod\subset \mathcal{M}(X)$ is relatively compact, then it is tight.
\end{enumerate}
\end{Prohorov's Theorem}

An important corollary of the conclusion (a) above is \cite[P59, Corollary]{Bil1}.
\begin{corollary}[Billingsley]\label{cor3}
Let $X$ be a metric space. If a sequence of measures $\{\nu_n\in\mathcal{M}(X)\}_{n=1}^\infty$ is tight, and every weakly convergent subsequence of it converges to the same metric $\nu\in\mathcal{M}(X)$, then \begin{center}
$\nu_n\stackrel{w}{\rightarrow}\nu$ 
\end{center}
as $n\rightarrow\infty$.
\end{corollary}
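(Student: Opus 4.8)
The plan is to argue by contradiction, using the classical subsequence principle together with part (a) of the Prohorov's Theorem stated above. First I would suppose, for contradiction, that $\nu_n$ does not converge weakly to $\nu$. By the definition of the weak topology on $\mathcal{M}(X)$, this means there exist a bounded continuous function $f\colon X\rightarrow\mathbb{R}$, a real number $\varepsilon>0$, and a subsequence $\{\nu_{n_k}\}_{k=1}^\infty$ such that $\left|\int_X f\, d\nu_{n_k}-\int_X f\, d\nu\right|\geq\varepsilon$ for every $k$. Packaging the failure of weak convergence through a single test function $f$, rather than through a neighbourhood basis of $\nu$, is precisely what makes the rest of the argument transparent.

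Next I would invoke tightness. Since $\{\nu_n\}_{n=1}^\infty$ is tight, the subsequence $\{\nu_{n_k}\}_{k=1}^\infty$ is tight as well (a subfamily of a tight family is tight, using the same compact sets). By Prohorov's Theorem (a), $\{\nu_{n_k}\}_{k=1}^\infty$ is relatively compact, so it admits a further subsequence $\{\nu_{n_{k_j}}\}_{j=1}^\infty$ converging weakly to some $\mu\in\mathcal{M}(X)$. This $\{\nu_{n_{k_j}}\}_{j=1}^\infty$ is a weakly convergent subsequence of the original sequence $\{\nu_n\}_{n=1}^\infty$, so by the hypothesis of the corollary its limit must be $\nu$, i.e.\ $\mu=\nu$.

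Now weak convergence $\nu_{n_{k_j}}\stackrel{w}{\rightarrow}\nu$ yields $\int_X f\, d\nu_{n_{k_j}}\rightarrow\int_X f\, d\nu$ as $j\rightarrow\infty$, which contradicts the inequality $\left|\int_X f\, d\nu_{n_{k_j}}-\int_X f\, d\nu\right|\geq\varepsilon$ inherited from the choice of $\{\nu_{n_k}\}_{k=1}^\infty$. This contradiction establishes $\nu_n\stackrel{w}{\rightarrow}\nu$.

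There is no genuine obstacle here: the substantive content is entirely carried by Prohorov's Theorem (a), which we are assuming. The only point deserving a line of care is the elementary subsequence principle — a sequence in a topological space converges to a point as soon as every subsequence has a further subsequence converging to that point — which is what licenses the step of extracting, from the supposed non-convergence, a subsequence along which the single integral $\int_X f\, d\nu_n$ stays bounded away from $\int_X f\, d\nu$, so that no sub-subsequence of it can converge weakly to $\nu$.
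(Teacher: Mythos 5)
Your argument is correct and is exactly the standard proof of this corollary (the one in Billingsley, which the paper simply cites rather than reproving): negate weak convergence to isolate a single test function and a subsequence bounded away by $\varepsilon$, then use tightness and Prohorov's Theorem (a) to extract a further weakly convergent subsequence, whose limit must be $\nu$ by hypothesis, giving the contradiction. No gaps; the subsequence principle is applied correctly and part (a) of Prohorov's Theorem indeed holds for an arbitrary metric space, as the paper states it.
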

The result is critical to show the weak convergence of dimensions of our concerning sequences of measures in our Theorem \ref{thm11}.

Now we turn to GDMS. For the dynamical system $(E^{\infty,A},\sigma)$ and a (potential) function $f: E^{\infty,A}\rightarrow \mathbb{R}$, define the \emph{topological pressure} of $f$ with respect to $(E^{\infty,A},\sigma)$ as
\begin{equation}\label{eq55}
P(f)=\lim_{n\rightarrow\infty}\cfrac{1}{n}\log Z_n(f)=\inf_{n\in\mathbb{N}}\{\cfrac{1}{n}\log Z_n(f)\},
\end{equation}
in which 
\begin{center}
$Z_n(f)=\sum_{\omega\in E^{n,A}}\exp\big(\sup_{\omega'\in[\omega]}\sum_{i=0}^{n-1}f(\sigma(\omega'))\big)$
\end{center}
is the $n$-th \emph{partition function} for the potential $f$ with respect to $(E^{\infty,A},\sigma)$ for any $n\in\mathbb{N}$. The above equality (\ref{eq55}) holds since $\{Z_n(f)\}_{n\in\mathbb{N}}$ is a \emph{subadditive} sequence, see also \cite[Proposition 2.4]{Kae} and \cite[(3.10)]{Fal2}. Recall the comparison metric $\rho_c$ in (\ref{eq38}).  A potential $f: E^{\infty,A}\rightarrow \mathbb{R}$ is called \emph{H\"older continuous with exponent} $\alpha$ if
\begin{center}
$V_\alpha(f)=\sup_{\omega,\omega'\in E^{\infty,A}, \omega\neq\omega'}\{\cfrac{f(\omega)-f(\omega')}{(\rho_c(\omega,\omega'))^\alpha}\}< \infty$.
\end{center} 
It is called \emph{summable} if
\begin{center}
$\sum_{e\in E}\exp(\sup\{f(\omega): \omega\in [e]\})<\infty$.
\end{center}
Let $C^b(E^{\infty,A})$ be the Banach space of all bounded continuous functions on $E^{\infty,A}$ equipped with the supreme norm $||\cdot||_{\infty}$. Let $f$ be a bounded  summable H\"older continuous function with exponent $\alpha$ from now on. 

\begin{definition}
The \emph{Ruelle-Perron-Frobenius} (or \emph{transfer}) \emph{operator} 
\begin{center}
$\mathcal{L}_f: C^b(E^{\infty,A})\rightarrow C^b(E^{\infty,A})$
\end{center}
associated with $f$ in this context is defined to be:
\begin{center}
$\mathcal{L}_f(g)(\omega)=\sum_{e\in E, A_{e\omega_1}=1}\exp(f(e\omega))g(e\omega)$
\end{center} 
for any $g\in C^b(E^{\infty,A})$ and $\omega\in E^{\infty,A}$. 
\end{definition}

Refer to \cite{Rue}. Its conjugate operator $\mathcal{L}_f^*$
acts on the dual space of $C^b(E^{\infty,A})$ as 
\begin{center}
$\mathcal{L}_f^*(\nu)(g)=\nu(\mathcal{L}_f(g))=\int \mathcal{L}_f(g)d\nu$
\end{center} 
for any $\nu\in\tilde{\mathcal{M}}(E^{\infty,A})$ of finite measures on  $E^{\infty,A}$ and $g\in C^b(E^{\infty,A})$.

\begin{rem}
Let $C^{bm}(E^{\infty,A})$ be the Banach space of all bounded measurable functions on $E^{\infty,A}$ equipped with the $L^p$ norm $||\cdot||_{p}$ for some $p\in\mathbb{N}$. Since $f$ is bounded measurable and summable, for  any bounded measurable function $g\in C^{bm}(E^{\infty,A})$, the series of functions 
\begin{center}
$\sum_{e\in E, A_{e\omega_1}=1}\exp(f(e\omega))g(e\omega)$
\end{center}
converges pointwisely for  any $\omega\in E^{\infty,A}$ and the limit function is bounded.  Then according to \cite[Corollary 8.9]{Sch}, the limit function is also measurable. In this way we extend the Ruelle-Perron-Frobenius operator from  $C^b(E^{\infty,A})$ to a larger space $C^{bm}(E^{\infty,A})$, which enables us to deal with those discontinuous functions in $C^{bm}(E^{\infty,A})$. But in this work the space $C^b(E^{\infty,A})$ is enough for us to deal with our concerns.
\end{rem}
Now we show that the Ruelle-Perron-Frobenius operator $\mathcal{L}_f^*$ acts continuously on the space $\tilde{\mathcal{M}}(E^{\infty,A})$ with respect to the weak topology. For a GDMS $S$ with infinite edges $E=\{e_1, e_2, \cdots, \}$, let
\begin{center}
$E_n=\{e_1, e_2, \cdots, e_n\}$
\end{center}
be its $n$-th truncation  for any $n\in\mathbb{N}$.  For a bounded  summable H\"older continuous function $f$, let $f_n=f|_{E_n^{\infty,A|_{E_n\times E_n}}}$.

\begin{lemma}\label{lem11}
For a bounded  summable H\"older continuous function $f$ with exponent $\alpha$, let $\{\nu_n\}_{n\in\mathbb{N}}\subset\tilde{\mathcal{M}}(E^{\infty,A})$ be a sequence of finite measures such that
\begin{center}
$\nu_n\stackrel{w}{\rightarrow}\nu$ 
\end{center} 
as $n\rightarrow\infty$ for some $\nu\in\tilde{\mathcal{M}}(E^{\infty,A})$. Then we have
\begin{center}
$\mathcal{L}_{f_n}^*(\nu_n)\stackrel{w}{\rightarrow} \mathcal{L}_f^*(\nu)$
\end{center}
as $n\rightarrow\infty$.
\end{lemma}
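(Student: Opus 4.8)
The plan is to show weak convergence of $\mathcal{L}_{f_n}^*(\nu_n)$ to $\mathcal{L}_f^*(\nu)$ by testing against an arbitrary $g\in C^b(E^{\infty,A})$ and using the triangle inequality to split the difference $\bigl|\mathcal{L}_{f_n}^*(\nu_n)(g)-\mathcal{L}_f^*(\nu)(g)\bigr|$ into two pieces: one that measures how far $\mathcal{L}_{f_n}(g)$ is from $\mathcal{L}_f(g)$ in sup norm (controlled uniformly by summability of $f$), and one that is $\bigl|\nu_n(\mathcal{L}_f(g))-\nu(\mathcal{L}_f(g))\bigr|$, which vanishes because $\mathcal{L}_f(g)\in C^b(E^{\infty,A})$ and $\nu_n\stackrel{w}{\rightarrow}\nu$. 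Concretely, for $\omega\in E_n^{\infty,A}$ we have
\begin{equation}\label{eq:split}
\mathcal{L}_{f_n}(g)(\omega)-\mathcal{L}_f(g)(\omega)=-\sum_{e\in E\setminus E_n,\, A_{e\omega_1}=1}\exp(f(e\omega))\,g(e\omega),
\end{equation}
so $\|\mathcal{L}_{f_n}(g)-\mathcal{L}_f(g)\|_\infty\leq \|g\|_\infty\sum_{e\in E\setminus E_n}\exp(\sup\{f(\tau):\tau\in[e]\})$, and the right-hand side tends to $0$ as $n\to\infty$ precisely by summability of $f$. This is the step where the standing hypotheses on $f$ — bounded, summable, H\"older — enter; H\"older continuity guarantees $\mathcal{L}_f(g)$ is itself bounded and continuous so that it is a legitimate test function for the weak convergence $\nu_n\stackrel{w}{\rightarrow}\nu$.

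First I would fix $g\in C^b(E^{\infty,A})$ and $\varepsilon>0$. I would choose $N_1$ so that for $n\geq N_1$ the tail sum in \eqref{eq:split} is below $\varepsilon/(3\|g\|_\infty\, M)$, where $M=\sup_n\nu_n(E^{\infty,A})<\infty$ is a uniform bound on the total masses (finite because the $\nu_n$ converge weakly in $\tilde{\mathcal{M}}(E^{\infty,A})$, hence their masses converge and are bounded); this handles $\bigl|\nu_n(\mathcal{L}_{f_n}(g))-\nu_n(\mathcal{L}_f(g))\bigr|\leq M\,\|\mathcal{L}_{f_n}(g)-\mathcal{L}_f(g)\|_\infty$. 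Then, since $\mathcal{L}_f(g)\in C^b(E^{\infty,A})$, weak convergence $\nu_n\stackrel{w}{\rightarrow}\nu$ yields $N_2$ with $\bigl|\nu_n(\mathcal{L}_f(g))-\nu(\mathcal{L}_f(g))\bigr|<\varepsilon/3$ for $n\geq N_2$. Combining, for $n\geq\max\{N_1,N_2\}$,
\begin{equation*}
\bigl|\mathcal{L}_{f_n}^*(\nu_n)(g)-\mathcal{L}_f^*(\nu)(g)\bigr|
\leq M\,\|\mathcal{L}_{f_n}(g)-\mathcal{L}_f(g)\|_\infty+\bigl|\nu_n(\mathcal{L}_f(g))-\nu(\mathcal{L}_f(g))\bigr|<\varepsilon,
\end{equation*}
which, $g$ and $\varepsilon$ being arbitrary, is exactly $\mathcal{L}_{f_n}^*(\nu_n)\stackrel{w}{\rightarrow}\mathcal{L}_f^*(\nu)$.

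The main obstacle — and the only genuinely nontrivial point — is the uniform (in $n$ and in $\omega$) control of the truncated transfer operator in \eqref{eq:split}: one needs that the ``missing'' edges $e\in E\setminus E_n$ contribute a tail that is small independently of the base point $\omega$, and this is where summability of $f$ (rather than mere boundedness) is indispensable, since $\exp(\sup\{f(\tau):\tau\in[e]\})$ dominates $\exp(f(e\omega))$ uniformly in $\omega$. A secondary technical care point is the remark in the statement about escape of mass: because we work with $\tilde{\mathcal{M}}(E^{\infty,A})$ rather than probability measures and the operators $\mathcal{L}_{f_n}^*$ need not preserve mass, one must verify $\sup_n\nu_n(E^{\infty,A})<\infty$ explicitly (it follows from weak convergence by taking $g\equiv 1$) before invoking it as the constant $M$; everything else is the routine $\varepsilon$–$N$ bookkeeping above.
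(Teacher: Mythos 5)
Your proof is correct and follows essentially the same route as the paper: split $\bigl|\mathcal{L}_{f_n}^*(\nu_n)(g)-\mathcal{L}_f^*(\nu)(g)\bigr|$ into the operator-truncation error, controlled uniformly by summability of $f$, plus the term $\bigl|\nu_n(\mathcal{L}_f(g))-\nu(\mathcal{L}_f(g))\bigr|$, which vanishes by weak convergence since $\mathcal{L}_f(g)\in C^b(E^{\infty,A})$. Your write-up is in fact slightly more careful than the paper's, which asserts the truncation estimate without displaying the tail sum or the uniform mass bound $M=\sup_n\nu_n(E^{\infty,A})$ that you correctly extract from weak convergence.
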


\begin{proof}
For any bounded continuous function $g\in C^b(E^{\infty,A})$, we have
\begin{center}
$\mathcal{L}_f^*(\nu_n)(g)=\nu_n(\mathcal{L}_f(g))=\int \mathcal{L}_f(g)d\nu_n=\int\sum_{e\in E, A_{e\omega_1}=1}\exp(f(e\omega))g(e\omega)d\nu_n$
\end{center}
and 
\begin{center}
$\mathcal{L}_{f_n}^*(\nu_n)(g)=\nu_n(\mathcal{L}_{f_n}(g))=\int \mathcal{L}_{f_n}(g)d\nu_n=\int\sum_{e\in E_n, A_{e\omega_1}=1}\exp(f_n(e\omega))g(e\omega)d\nu_n$.
\end{center}
Since $f$ is bounded,  summable and H\"older continuous, for any $\epsilon>0$ and $n$ large enough, we have
\begin{equation}\label{eq39}
|\mathcal{L}_{f_n}^*(\nu_n)(g)-\mathcal{L}_f^*(\nu_n)(g)|<\epsilon.
\end{equation}

As $\mathcal{L}_f$ preserves $C^b(E^{\infty,A})$ and $\nu_n\stackrel{w}{\rightarrow}\nu$, we have
\begin{equation}\label{eq40}
\int \mathcal{L}_f(g)d\nu_n\rightarrow\int \mathcal{L}_f(g)d\nu=\nu(\mathcal{L}_f(g))=\mathcal{L}_f^*(\nu)(g)
\end{equation}
as $n\rightarrow\infty$. So the conclusion follows from a combination of (\ref{eq39}) and (\ref{eq40}).
\end{proof}

\begin{rem}
Note that even if we assume $\nu, \nu_n\in\mathcal{M}(E^{\infty,A})$ for any $n\in\mathbb{N}$, the measures 
\begin{center}
$\{\mathcal{L}_f^*(\nu_n)\}_{n\in\mathbb{N}}\cup\{\mathcal{L}_f^*(\nu)\}\subset \tilde{\mathcal{M}}(E^{\infty,A})$
\end{center}
are possibly to be not probability ones. This is sure for any eigenmeasure with eigenvalue not equal to $1$.
\end{rem}

Our ultimate concern will be on the $t$-conformal measures supported on the limit set  $J$ of a CGDMS. However, following Mauldin-Urba\'nski, we deal with it in a more general environment-the $F$-conformal measures induced by a summable H\"older family $F$ of functions. 

\begin{definition}
A collection of functions 
\begin{center}
$F=\{f^{(e)}: X_{t(e)}\rightarrow\mathbb{R}\}_{e\in E}$
\end{center}
is called a \emph{summable H\"older family of functions of order $\alpha$} if
\begin{center}
$\sum_{e\in E} \Vert exp(f^{(e)})\Vert_{\infty}<\infty $
\end{center}
and
\begin{center}
$\sup_{n\in\mathbb{N}}V_n(F)=\sup_{n\in\mathbb{N}}\Big\{\sup_{\omega\in E^n}\sup_{x,y\in X_{t(\omega)}}\{\cfrac{|f^{(\omega_1)}(s_{\sigma(\omega)}(x))-f^{(\omega_1)}(s_{\sigma(\omega)}(y))|}{(e^{-(n-1)})^\alpha}\}\Big\}<\infty$.
\end{center}
\end{definition}

Define its \emph{topological pressure} as
\begin{center}
$P(F)=\lim_{n\in\mathbb{N}}\cfrac{1}{n}\log\Big(\sum_{\omega\in E^n}\Vert exp(\sum_{j=1}^n f^{(\omega_j)}\circ s_{\sigma^j(\omega)})\Vert_{\infty}\Big)$.
\end{center}

We call the induced potential from $E^\infty$ to $\mathbb{R}$
\begin{center}
$f(\omega)=f^{(\omega_1)}(\pi(\sigma(\omega)))$
\end{center}
the \emph{amalgamated function} of the family $F$. 

Mauldin and Urba\'nski showed that 
\begin{center}
$P(F)=P(f)$.
\end{center}
They also showed that if $F$ is a  H\"older family of order $\alpha$, then its induced amalgamated function is also a H\"older function of order $\alpha$. Now we show the converse is also true.
\begin{proposition}
If  the amalgamated function $f:E^\infty\rightarrow\mathbb{R}$ of a family $F=\{f^{(e)}: X_{t(e)}\rightarrow\mathbb{R}\}_{e\in E}$ is a H\"older function of order $\alpha$, then family $F$ is a H\"older family of order $\alpha$.
\end{proposition}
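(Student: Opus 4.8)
The plan is to establish the reverse implication by showing that the two defining conditions of a summable H\"older family — summability of $\sum_e \|\exp(f^{(e)})\|_\infty$ and boundedness of $\sup_n V_n(F)$ — both follow from the corresponding properties of the amalgamated function $f(\omega) = f^{(\omega_1)}(\pi(\sigma(\omega)))$. The key observation is that the cylinder sets $[e]$ for $e \in E$ partition $E^\infty$ (more precisely $E^{\infty,A}$), and on the cylinder $[e]$ the amalgamated function $f$ restricts to $\omega \mapsto f^{(e)}(\pi(\sigma(\omega)))$. Since $\pi$ restricted to $[e]$ is onto a subset of $X_{t(e)}$ (indeed onto $s_e^{-1}$ applied appropriately, but crucially its image is dense in the relevant portion and $f^{(e)}$ is continuous), the supremum $\sup\{f(\omega) : \omega \in [e]\}$ agrees with (or is comparable to, up to controlled error) $\sup_{x} f^{(e)}(x)$ over the closure of $\pi(\sigma([e]))$.

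First I would handle summability. By definition $f$ summable means $\sum_{e \in E} \exp(\sup\{f(\omega) : \omega \in [e]\}) < \infty$. On $[e]$ we have $f(\omega) = f^{(e)}(\pi(\sigma(\omega)))$, and as $\omega$ ranges over $[e]$, the point $\pi(\sigma(\omega))$ ranges over a dense subset of $s$-images inside $X_{t(e)}$; by continuity of $f^{(e)}$ the supremum of $f$ over $[e]$ equals the supremum of $f^{(e)}$ over that set, which is at most $\|f^{(e)}\|_\infty = \log\|\exp(f^{(e)})\|_\infty$. Hence $\exp(\sup_{[e]} f) \le \|\exp(f^{(e)})\|_\infty$. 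This gives one inequality immediately; for the matching lower bound one uses that the set $\pi(\sigma([e]))$ together with the bounded distortion of the conformal maps controls how far $\sup_{[e]} f$ can fall below $\sup_{X_{t(e)}} f^{(e)}$ — but in fact for summability we only need $\sum_e \|\exp(f^{(e)})\|_\infty \le C \sum_e \exp(\sup_{[e]} f) < \infty$, and the reverse comparison is where the distortion estimate enters.

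Next I would handle the H\"older condition. Given $\omega \in E^n$ and $x, y \in X_{t(\omega)}$, I want to bound $|f^{(\omega_1)}(s_{\sigma(\omega)}(x)) - f^{(\omega_1)}(s_{\sigma(\omega)}(y))|$ in terms of $V_\alpha(f)$. The trick is to pick infinite words $\tau, \tau' \in E^{\infty,A}$ both beginning with $\omega_1 \cdots \omega_n$ such that $\pi(\sigma(\tau)) = s_{\sigma(\omega)}(x)$-type points; more carefully, one chooses $\tau = \omega \rho$ and $\tau' = \omega \rho'$ with a common prefix of length exactly $n$ (or using points of $X_{t(\omega)}$ realized as projections of tails), so that $\rho_c(\tau, \tau') \le e^{1-(n+1)} = e^{-n}$. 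Then $f(\tau) - f(\tau') = f^{(\omega_1)}(\pi(\sigma(\tau))) - f^{(\omega_1)}(\pi(\sigma(\tau')))$, and the points $\pi(\sigma(\tau)), \pi(\sigma(\tau'))$ lie in $s_{\sigma(\omega)}(X_{t(\omega)})$, with $\pi(\sigma(\tau))$ ranging densely enough that continuity lets us pass to arbitrary $s_{\sigma(\omega)}(x)$. The H\"older bound on $f$ gives $|f(\tau) - f(\tau')| \le V_\alpha(f) \rho_c(\tau,\tau')^\alpha \le V_\alpha(f) (e^{-n})^\alpha$, and since $V_n(F)$ is defined with denominator $(e^{-(n-1)})^\alpha$, we get $V_n(F) \le e^\alpha V_\alpha(f)$ uniformly in $n$, hence $\sup_n V_n(F) < \infty$.

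The main obstacle I expect is the precise handling of suprema over cylinders versus suprema over the metric spaces $X_v$: one must be careful that $\pi(\sigma([e]))$ — or more generally the set of realized projection points — is large enough (dense, via the open set / separation conditions and continuity) that taking suprema of the continuous functions $f^{(e)}$ over it recovers the $\|\cdot\|_\infty$ norms and the full modulus of continuity, rather than only a proper sub-supremum. This is where the structure of the CGDMS (Definition \ref{def9}) and the bounded distortion property (4) are used: they guarantee that every $x \in X_{t(e)}$ is approximated by projection points $\pi(\sigma(\omega))$ with $\omega \in [e]$, so no loss occurs. Once this density/continuity bookkeeping is in place, both estimates are routine, and combining them yields that $F$ is a summable H\"older family of order $\alpha$, completing the proof.
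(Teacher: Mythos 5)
Your treatment of the variation condition is essentially the paper's proof: both arguments realize $x,y\in X_{t(\omega)}$ as tails of infinite words sharing the length-$n$ prefix $\omega$, observe that $\rho_c\leq e^{-n}\leq e^{-(n-1)}$, and multiply and divide by $\rho_c^{\alpha}$ to bound $V_n(F)$ by a uniform constant times $V_\alpha(f)$; the realizability concern you raise (that only points of the form $\pi(\sigma(\tau))$ are seen by $f$) is genuine but is passed over silently in the paper as well, which simply writes $\pi^{-1}(x)$ for arbitrary $x\in X_{t(\omega)}$. The entire summability discussion should be dropped: the proposition asserts only that $F$ is a H\"older family of order $\alpha$, i.e.\ that $\sup_n V_n(F)<\infty$, and in any case the comparison $\sum_{e}\Vert \exp(f^{(e)})\Vert_\infty\leq C\sum_{e}\exp(\sup_{[e]}f)$ that you defer to a distortion estimate is precisely the direction you did not establish.
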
 
\begin{proof}
Not that for any $\omega\in E^n$ and $x,y\in X_{t(\omega)}$, we have
\begin{center}
$\rho_c(\omega\pi^{-1}(x),\omega\pi^{-1}(y))\leq e^{-(n-1)\alpha}$.
\end{center}
Then
\begin{center}
$
\begin{array}{ll}
& \cfrac{|f^{(\omega_1)}(s_{\sigma(\omega)}(x))-f^{(\omega_1)}(s_{\sigma(\omega)}(y))|}{(e^{-(n-1)})^\alpha}\\
= & \cfrac{|f(\omega\pi^{-1}(x))-f(\omega\pi^{-1}(y))|}{\rho_c(\omega\pi^{-1}(x),\omega\pi^{-1}(y))}\cdot\cfrac{\rho_c(\omega\pi^{-1}(x),\omega\pi^{-1}(y))}{e^{-\alpha(n-1)}}\\
\leq & V_\alpha(f)
\end{array}
$
\end{center}
\end{proof}
 
\begin{rem}
Note that  if the GDMS has the property that almost every $x\in X$ has a unique representation $\pi^{-1}(x)\in E^\infty$ with respect to some measure $\nu$ on $X$, then the correspondence between summable H\"older families of functions and summable H\"older potentials on $E^\infty$  is  $1-1$ by neglecting some domain of null measure. For a summable H\"older potential $f: E^\infty\rightarrow\mathbb{R}$, simply define its induced H\"older family of functions to be
\begin{equation}\label{eq41}
F=\{f^{(e)}(x)=f(e\pi^{-1}(x)),\ x\in X_{t(e)}\}_{e\in E}.
\end{equation}
If we require the measure $\nu$ to be supported on $J$ and the GDMS is a CGDMS, then the H\"older potentials can be assumed to be only on $E^{\infty,A}$, with (\ref{eq41}) defined essentially for words $\pi^{-1}(x)$ such that $e\pi^{-1}(x)\in E^{\infty,A}$. This is our main interested case in this work. From this point of view, the description of the dynamics of CGDMS by summable H\"older families of functions and summable H\"older potentials is parallel to each other.   
\end{rem}

Now we define the concerning measures we study in this section. 

\begin{definition}\label{def3}
For a summable H\"older family of functions $F$, a Borel probability measure $m_F\in\mathcal{M}(X)$ is  called $F$-\emph{conformal} if it satisfies the following conditions:\\

 (a1). $m_F(J)=1$.\\
 
(a2). For any $e\in E$ and any measurable set $B\subset X_{t(e)}$, the following holds,
\begin{center}
$m_F(s_\omega(B))=\int_B \exp(f^{(e)}-P(F))dm_F$.
\end{center}

 (a3). For any distinct pair of edges $e,e'\in E$, 
\begin{center}
 $m_F\big(s_e(X_{t(e)})\cap s_{e'}(X_{t(e')})\big)=0$.
\end{center} 

\end{definition}

For the existence and uniqueness of the $F$-conformal measure $m_F$ for a \emph{conformal}-\emph{like} GDMS with finitely primitive incidence matrix $A$ and a summable H\"older family $F$ , see \cite[Theorem 3.2.3]{MU3}. In this case we denote its pull-back measure on $E^{\infty, A}$  by 
\begin{center}
$\mu_F=m_F\circ\pi$. 
\end{center}
It is the unique eigenmeasure of the conjugate Ruelle-Perron-Frobenius operator $\mathcal{L}_f^*$ with eigenvalue $e^{P(F)}$ ($f$ is the amalgamated function of $F$). The ergodic measure equivalent with $\mu_F$ is denoted by $\mu_F^*$. So 
\begin{center}
$m_F^*=\mu^*_F\circ\pi^{-1}$ 
\end{center}
is the ergodic measure equivalent with $m_F$ on $J$, in virtue of \cite[Corollary 2.7.5]{MU3}. Note that our notations are different from ones therein as we are trying to maintain consistance of notations with measures in Section \ref{sec6}. From now on we focus on the asymptotic behaviour of the finite sub-systems of an infinite GDMS $S$. Let 
\begin{center}
$V_n=\{v\in V: \mbox{\ there exists some\ } e\in E_n \mbox{\ such that\ } i(e)=v \mbox{\ or\ } t(e)=v\}$.
\end{center}
For a summable H\"older family $F$, let 
\begin{center}
$F_n=\{f^{(e)}: X_{t(e)}\rightarrow\mathbb{R}\}_{e\in E_n}$
\end{center}
be its $n$-th sub-family. Obviously they are also summable H\"older families.  In case of a GDMS being finitely irreducible or finitely primitive, let 
\begin{center}
$\tilde{\Lambda}=\{e\in E: \mbox{\ there exists a finite word\ } \omega\in\Lambda, \mbox{\ such that\ } \omega_i=e \mbox{\ for some\ } 1\leq i\leq |\omega|\}$.
\end{center}  We first show the following result.
\begin{lemma}\label{lem12}
For a conformal-like GDMS 
\begin{center}
$S=\big\{V, E, A, t, i, \{X_v\}_{v\in V}, \{s_e\}_{e\in E}, \gamma\big\}$ 
\end{center}
with infinite edges $E=\{e_1, e_2, \cdots, \}$, a finitely primitive incidence matrix $A$ and a summable H\"older family $F$, let $\Lambda\subset E^{*,A}$ be a finite set which witnesses its finite primitiveness. Then there exists an integer $N_\Lambda\in\mathbb{N}$, such that for any $n\geq N_\Lambda$, there exists a unique $F_n$-conformal measure  $m_{F_n}$ for the sub-GDMS 
\begin{center}
$S_n=\big\{V_n, E_n, A|_{E_n\times E_n}, t, i, \{X_v\}_{v\in V_n}, \{s_e\}_{e\in E_n}, \gamma\big\}$. 
\end{center} 

\end{lemma}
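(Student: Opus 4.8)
The plan is to reduce Lemma~\ref{lem12} to the existence-and-uniqueness result \cite[Theorem 3.2.3]{MU3} for $F$-conformal measures, which applies to any conformal-like GDMS whose incidence matrix is finitely primitive and whose H\"older family is summable. Since the subfamily $F_n$ is automatically summable and H\"older (its defining sums and distortion bounds are dominated by those of $F$, as remarked just before the statement), the only thing one must check to invoke \cite[Theorem 3.2.3]{MU3} for the truncated system $S_n$ is that the restricted incidence matrix $A|_{E_n\times E_n}$ is itself finitely primitive. So the heart of the proof is: \emph{the finite primitiveness of $A$ on $E$ is inherited by $A$ on $E_n$ once $n$ is large enough.}

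First I would recall that finite primitiveness of $A$ is witnessed by a finite set $\Lambda\subset E^{*,A}$ consisting of words all of the same length, say $p$, such that for every ordered pair $(e,e')\in E\times E$ there is some $\omega\in\Lambda$ with $e\omega e'\in E^{*,A}$. Let $\tilde\Lambda\subset E$ be the (finite) set of all edges that appear as a letter in some word of $\Lambda$, exactly as defined right before the statement. Now choose $N_\Lambda\in\mathbb{N}$ large enough that $\tilde\Lambda\subset E_{N_\Lambda}=\{e_1,\dots,e_{N_\Lambda}\}$; this is possible because $\tilde\Lambda$ is finite. Then for any $n\ge N_\Lambda$ we have $\Lambda\subset E_n^{*,A}$, i.e.\ every word of $\Lambda$ uses only edges of $E_n$ and is $A$-admissible, hence admissible for $A|_{E_n\times E_n}$. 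Consequently, for any pair $(e,e')\in E_n\times E_n$, the same $\omega\in\Lambda$ that gave $e\omega e'\in E^{*,A}$ now gives $e\omega e'\in E_n^{*,A}$, because all the letters involved lie in $E_n$ and all the admissibility constraints $A_{\cdot\cdot}=1$ are unchanged under restriction. Thus $\Lambda$ also witnesses the finite primitiveness of $A|_{E_n\times E_n}$, with the same common word-length $p$.

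With $A|_{E_n\times E_n}$ finitely primitive, the truncated system
\begin{center}
$S_n=\big\{V_n, E_n, A|_{E_n\times E_n}, t, i, \{X_v\}_{v\in V_n}, \{s_e\}_{e\in E_n}, \gamma\big\}$
\end{center}
is a conformal-like GDMS in its own right: the compact connected sets $\{X_v\}_{v\in V_n}$, the open neighbourhoods $\{W_v\}_{v\in V_n}$, the uniform Lipschitz/contraction constant $\gamma$, the BDP constant $K$, and the conformality of each $s_e$ for $e\in E_n$ are all simply inherited verbatim from $S$. (One should note in passing that the connectivity requirement — at least two edges into each vertex — is what the definition of $V_n$ is arranged to respect, although for the purpose of quoting \cite[Theorem 3.2.3]{MU3} only finite primitiveness and summability are needed.) Since $F_n$ is a summable H\"older family for $S_n$, \cite[Theorem 3.2.3]{MU3} applies and yields a unique $F_n$-conformal measure $m_{F_n}$ on the limit set $J_n$ of $S_n$, which is exactly the assertion of the lemma.

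The only genuinely non-routine point is the inheritance of finite primitiveness, and even that is a short combinatorial observation once one has isolated the set $\tilde\Lambda$ of letters occurring in $\Lambda$ and taken $N_\Lambda$ past it; there is no subtlety with admissibility because restricting the alphabet never creates new forbidden transitions, and the witnessing words of $\Lambda$ are preserved intact. The main ``obstacle'' is therefore purely bookkeeping: making sure that $\Lambda$ (not just $\tilde\Lambda$) actually sits inside $E_n^{*,A}$, i.e.\ that every edge used by every word of $\Lambda$ has index $\le n$, which is precisely the role of the threshold $N_\Lambda$. Everything else — summability, H\"older continuity, conformality, BDP — descends to the subsystem for free, so no new estimates are required.
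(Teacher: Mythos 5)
Your proposal is correct and follows essentially the same route as the paper: choose $N_\Lambda$ so that $E_{N_\Lambda}\supset\tilde\Lambda$, observe that $\Lambda$ then witnesses finite primitiveness of the truncated system while conformality and summable H\"older regularity descend automatically, and invoke \cite[Theorem 3.2.3]{MU3}. The only difference is that you spell out the admissibility bookkeeping in more detail than the paper does.
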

\begin{proof}
The sub-GDMS $S_n$ inherits the conformal-like  property of $S$ and $F_n$ inherits the summable and H\"older property of the family  $F$ obviously. Now let $N_\Lambda$ be the least integer such that 
\begin{center}
$E_n\supset \tilde{\Lambda}$.
\end{center}   
Then for $n\geq N_\Lambda$, $\Lambda\subset E_n^{*,A|_{E_n\times E_n}}$ witnesses the  finite primitiveness of $S_n$. So for any $n\geq N_\Lambda$, there is a unique $F_n$-conformal measure  $m_{F_n}$ for $S_n$ according to \cite[Theorem 3.2.3]{MU3}.
\end{proof}
Note that according to \cite[Theorem 2.1.5, Proposition 3.1.4]{MU3}, the sequence of topological pressure $\{P(F_n)\}_{n\in\mathbb{N}}$ of the sub-GDMS $S_n$ converges to $P(F)$. 

\begin{lemma}\label{lem13}
The sequence of conformal measures $\{m_{F_n}\}_{n\geq N_\Lambda}$ is tight on $J$.
\end{lemma}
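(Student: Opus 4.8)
The plan is to exploit the uniform summability built into the definition of a summable H\"older family, together with the bounded distortion property (BDP) that comes from conformality, to produce a single exhausting sequence of compact sets that captures almost all of the mass of \emph{every} $m_{F_n}$ simultaneously. The natural candidates are the sets
\begin{center}
$K_N=\{\omega\in E^{\infty,A}: \omega_j\in E_N \mbox{ for all } j\in\mathbb{N}\}=E_N^{\infty,A|_{E_N\times E_N}}$,
\end{center}
viewed inside $J$ via $\pi$ (more precisely $\pi(K_N)$, which is compact since $K_N$ is a closed subset of the compact space $E^{\infty,A}$ and $\pi$ is continuous). So the first step is to fix $\epsilon>0$ and show that there is an $N=N(\epsilon)$ with $m_{F_n}(\pi(K_N))>1-\epsilon$ for all $n\geq N_\Lambda$.

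First I would estimate, for $n\geq N_\Lambda$, the $m_{F_n}$-measure of the complement, i.e.\ the set of words that use at least one edge from $E\setminus E_N$ (equivalently $E_n\setminus E_N$) at some coordinate. Using the $F_n$-conformality relation (a2) of Definition \ref{def3}, for a fixed coordinate position $k$ one gets
\begin{center}
$m_{F_n}\big(\{\omega: \omega_k\notin E_N\}\big)\leq \sum_{e\in E_n\setminus E_N}\Vert\exp(f^{(e)}-P(F_n))\Vert_\infty\leq e^{-P(F_n)}\sum_{e\notin E_N}\Vert\exp(f^{(e)})\Vert_\infty,$
\end{center}
and since $\{P(F_n)\}$ converges to $P(F)$ the prefactors $e^{-P(F_n)}$ are uniformly bounded, say by $C_0$. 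Summing the geometric-type tail over all coordinates $k$ using the uniform contraction $\gamma<1$ (the BDP gives $\Vert\exp(f^{(e)}\circ s_{\sigma^j})\Vert$-type bounds that decay geometrically in $j$, which is exactly the mechanism already used in the proof of Theorem \ref{thm14} to control $m(J\setminus\tilde J)$), one obtains
\begin{center}
$m_{F_n}\big(J\setminus\pi(K_N)\big)\leq \cfrac{C_0}{1-\gamma}\sum_{e\notin E_N}\Vert\exp(f^{(e)})\Vert_\infty.$
\end{center}
By summability of $F$ the right-hand side tends to $0$ as $N\to\infty$, uniformly in $n$, so choosing $N$ large makes it $<\epsilon$; this is exactly tightness. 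The finitely many measures $m_{F_{N_\Lambda}},\dots$ below the threshold do not matter either, and indeed the Proposition just before Prohorov's Theorem lets us ignore any finite exceptional set; but here the bound is genuinely uniform over all $n\geq N_\Lambda$, so that refinement is not even needed.

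The step I expect to require the most care is making the ``sum over coordinates with geometric decay'' rigorous: the conformality identity (a2) controls the measure of a single cylinder in terms of $\exp(f^{(e)}-P(F_n))$ on the base, but to bound the measure of $\{\omega:\omega_k\notin E_N\}$ for $k\geq 2$ one must iterate (a2) along the first $k-1$ coordinates and absorb the resulting products of transfer-operator factors. This is where BDP and the H\"older bound $\sup_n V_n(F)<\infty$ enter, guaranteeing that these products behave like $e^{(k-1)P(F_n)}$ up to a multiplicative constant independent of $n$ and of the word, so that after dividing by the normalization the $k$-dependence is genuinely a $\gamma^{\,k}$-type decay. Once that uniform geometric bound is in hand, the tail sum over $k$ converges and the summability of $\sum_{e\notin E_N}\Vert\exp(f^{(e)})\Vert_\infty$ finishes the argument. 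Everything else — compactness of $\pi(K_N)$, transferring the estimate from $E^{\infty,A}$ to $J$ via $\pi$ — is routine.
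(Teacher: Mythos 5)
Your choice of exhausting compact sets is the fatal flaw: the sets $K_N=E_N^{\infty,A|_{E_N\times E_N}}$, which restrict \emph{every} coordinate to $E_N$, are exactly the wrong candidates. Two things go wrong. First, the claimed ``$\gamma^{\,k}$-type decay'' of $m_{F_n}\big(\{\omega:\omega_k\notin E_N\}\big)$ in the coordinate position $k$ is false: summing $\mu_{F_n}([\tau e])$ over $\tau\in E_n^{k-1,A}$ and $e\notin E_N$ produces, via the Gibbs/conformality estimates, a factor comparable to $Z_{k-1}(f_n)e^{-(k-1)P(F_n)}$, which is bounded above \emph{and below} by constants independent of $k$ (equivalently, $\mu_{F_n}$ has bounded density with respect to a shift-invariant measure, for which the measure of $\sigma^{-(k-1)}\big(\cup_{e\notin E_N}[e]\big)$ is literally constant in $k$). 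So the estimate for each fixed $k$ is of order $\sum_{e\notin E_N}\Vert\exp(f^{(e)})\Vert_\infty$ with no decay in $k$, and the sum over all $k\in\mathbb{N}$ diverges; the uniform contraction $\gamma$ controls diameters of cylinders, not their measures. Second, and decisively, the analogue of Theorem \ref{thm10} for GDMS (proved by ergodicity of $\mu_{F_n}^*$ together with the $\sigma$-invariance of $E_N^{\infty,A}$) gives $m_{F_n}\big(\pi(K_N)\big)=0$ for every $n>N$: the conformal measure of a system is singular with respect to the limit set of any proper sub-system. Hence your compact sets carry \emph{none} of the mass of all but finitely many measures in the sequence, and no choice of $N$ can achieve $m_{F_n}(\pi(K_N))>1-\epsilon$ uniformly.

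The repair --- and the paper's actual argument --- is to restrict only the \emph{first} coordinate. Given $\epsilon>0$, use summability of $F$ and the uniform bound $\sup_n e^{-P(F_n)}\leq M$ (which you correctly identified) to choose $N_\epsilon$ with $M\sum_{i>N_\epsilon}\Vert\exp(f^{(e_i)})\Vert_\infty<\epsilon$, and set $X_c=\cup_{i\leq N_\epsilon}\,s_{e_i}(X_{t(e_i)})$, a finite union of continuous images of the compact spaces $X_v$, hence compact. A single application of the conformality relation (a2) to the first-level cylinders $[e_i]$ with $i>N_\epsilon$ then gives $m_{F_n}(J\setminus X_c)<\epsilon$ for all $n$, with no summation over coordinate positions at all. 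Only your first displayed inequality (the $k=1$ case) is sound; everything built on iterating it along the word does not survive.
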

\begin{proof}
First, since the sequence of topological pressure $\{P(F_n)\}_{n\in\mathbb{N}}$ increases to $P(F)$ according to \cite[Theorem 2.1.5]{MU3}, then the sequence $\{\exp(-P(F_n))\}_{n\in\mathbb{N}}$ is bound, say 
\begin{center}
$\sup_{n\in\mathbb{N}}\{\exp(-P(F_n))\}\leq M$. 
\end{center}
As $F$ is summable, for any $\epsilon>0$, there is some $N_\epsilon\in\mathbb{N}$ with $N_\epsilon>N_\Lambda$, such that
\begin{center}
$\sum_{i=N_\epsilon+1}^\infty \Vert \exp(f^{(e_i)})\Vert_\infty<\cfrac{\epsilon}{M}$.
\end{center}
Now let 
\begin{center}
$X_c=\cup_{1\leq i\leq N_\epsilon} \pi([e_i])$. 
\end{center}
The set is compact because the space $X_v$ is compact for any $v\in V$. Now applying Definition \ref{def3} (a2), for any $n>N_\epsilon$, we have
\begin{center}
$
\begin{array}{ll}
& m_{F_n}\big(\cup_{N_\epsilon+1\leq i\leq n} \pi([e_i])\big)= \int_X \exp(f^{(e_i)}-P(F_n))dm_{F_n}\\
\leq & \sum_{N_\epsilon+1\leq i\leq n}\exp(-P(F_n))\exp(f^{(e_i)})\leq   M\sum_{N_\epsilon+1\leq i\leq n}\Vert \exp(f^{(e_i)})\Vert_\infty\\
< & M\cdot\cfrac{\epsilon}{M}=\epsilon.
\end{array}
$
\end{center}
This forces 
\begin{center}
$m_{F_n}(X_c)=m_{F_n}\Big(X\setminus \big(\cup_{N_\epsilon+1\leq i\leq n} \pi([e_i])\big)\Big)>1-\epsilon$
\end{center}
for any $n>N_\epsilon$. As $m_{F_n}(X_c)=1$ for any $N_\Lambda\leq n\leq N_\epsilon$, the tightness of the sequence follows.
\end{proof}

\begin{lemma}\label{lem14}
If a subsequence $\{m_{F_{n_j}}\}_{j\in\mathbb{N}}$ of the sequence of conformal measures $\{m_{F_n}\}_{n\geq N_\Lambda}$ converges weakly to some measure $\nu\in\mathcal{M}(X)$, then it is necessary that
\begin{center}
$\nu=m_F$.
\end{center}
\end{lemma}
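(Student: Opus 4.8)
The plan is to identify the weak limit $\nu$ by showing it satisfies the three defining conditions of an $F$-conformal measure (Definition \ref{def3}) and then invoke the uniqueness part of \cite[Theorem 3.2.3]{MU3} to conclude $\nu = m_F$. First I would verify condition (a2): fix an edge $e \in E$ and a measurable set $B \subset X_{t(e)}$. For each sufficiently large $j$ (so that $e \in E_{n_j}$), the measure $m_{F_{n_j}}$ satisfies $m_{F_{n_j}}(s_e(B)) = \int_B \exp(f^{(e)} - P(F_{n_j}))\, dm_{F_{n_j}}$. To pass to the limit, I would first restrict to $B$ with $m_F(\partial(s_e(B))) = 0$ and $m_F(\partial B) = 0$, so that weak convergence gives $m_{F_{n_j}}(s_e(B)) \to \nu(s_e(B))$ and $\int_B g\, dm_{F_{n_j}} \to \int_B g\, d\nu$ for continuous $g$; since $P(F_{n_j}) \to P(F)$ by \cite[Theorem 2.1.5, Proposition 3.1.4]{MU3} and $f^{(e)}$ is continuous and bounded on the compact set $X_{t(e)}$, the integrand $\exp(f^{(e)} - P(F_{n_j}))$ converges uniformly to $\exp(f^{(e)} - P(F))$. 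This yields $\nu(s_e(B)) = \int_B \exp(f^{(e)} - P(F))\, d\nu$ for a generating $\pi$-system of such "continuity sets", and then a monotone-class / regularity argument extends it to all measurable $B$.

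Next I would check (a1), namely $\nu(J) = 1$. Each $m_{F_{n_j}}$ is supported on the limit set $J_{n_j}$ of the sub-GDMS, and $J_{n_j} \subset J$; moreover the tightness established in Lemma \ref{lem13} shows that the mass does not escape, so for every $\epsilon$ there is a compact $X_c = \cup_{i \le N_\epsilon} \pi([e_i]) \subset J$ with $m_{F_{n_j}}(X_c) > 1-\epsilon$ for all large $j$. Since $X_c$ is compact (hence closed), the portmanteau theorem gives $\nu(X_c) \ge \limsup_j m_{F_{n_j}}(X_c) \ge 1 - \epsilon$, so $\nu(J) \ge \nu(\overline{\cup_i \pi([e_i])}) = 1$. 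For (a3), the disjointness condition, I would note that $s_e(X_{t(e)}^o) \cap s_{e'}(X_{t(e')}^o) = \emptyset$ for distinct $e, e'$ by the CGDMS axioms, so the overlap $s_e(X_{t(e)}) \cap s_{e'}(X_{t(e')})$ lies in the union of boundaries $\partial(s_e(X_{t(e)})) \cup \partial(s_{e'}(X_{t(e')}))$, a set that is $m_{F_n}$-null for every $n$ (by (a3) for the sub-systems together with conformality); combining with the already-established (a2) one shows $\nu$ assigns it zero measure — alternatively this is automatic once one knows $\nu$ is $F$-conformal by the structure theory.

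The main obstacle I anticipate is the boundary-null-set bookkeeping needed to legitimately pass weak limits through the relation in (a2): weak convergence only controls $\nu$ on continuity sets, so I must argue that enough sets $B$ (and their images $s_e(B)$) have $\nu$-null boundary to generate the Borel $\sigma$-algebra, and that the contraction maps $s_e$ are nice enough (they are $C^1$ conformal diffeomorphisms onto their images, hence homeomorphisms) that $s_e(\partial B) = \partial(s_e(B))$ up to negligible discrepancies. A clean way around part of this is to work instead in the symbolic space: pull everything back under $\pi$, use that cylinder sets $[\omega]$ are clopen in $(E^{\infty,A}, \rho_c)$ (so automatically continuity sets), verify the eigenmeasure equation $\mathcal{L}_{f_{n_j}}^*(\mu_{F_{n_j}}) = e^{P(F_{n_j})} \mu_{F_{n_j}}$ passes to the limit via Lemma \ref{lem11}, conclude the limit of the pullbacks is the unique eigenmeasure $\mu_F$ of $\mathcal{L}_f^*$, and then push forward by $\pi$ to get $\nu = \mu_F \circ \pi^{-1} = m_F$. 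I would likely present the symbolic-space route as the primary argument since it sidesteps the Euclidean boundary issues entirely and makes direct use of the continuity of the transfer operator proved in Lemma \ref{lem11}.
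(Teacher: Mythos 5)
Your preferred route --- pulling back to the symbolic space, passing the eigenmeasure equation $\mathcal{L}_{f_{n_j}}^*(\mu_{F_{n_j}})=e^{P(F_{n_j})}\mu_{F_{n_j}}$ to the limit via Lemma \ref{lem11} and the convergence $P(F_{n_j})\to P(F)$, and invoking uniqueness of the eigenmeasure of $\mathcal{L}_f^*$ before pushing forward under $\pi$ --- is exactly the paper's proof. The first two paragraphs (direct verification of the conformality conditions on $X$ with continuity-set bookkeeping) are a workable but more laborious alternative that the paper does not pursue; since you correctly identify the symbolic-space argument as the cleaner primary one, the proposal is sound.
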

\begin{proof}
We work on the pull-back measures 
\begin{center}
$\{\mu_{F_n}=m_{F_n}\circ\pi\}_{n\geq N_\Lambda}, \mu_{F}=m_{F}\circ\pi$ and $\mu=\nu\circ\pi$
\end{center}
respectively of 
\begin{center}
$\{m_{F_n}\}_{n\geq N_\Lambda}, m_{F}$ and $\nu$ 
\end{center}
on the symbolic spaces.  Note that according to \cite[Theorem 3.2.3]{MU3}, $\mu_{F_{n_j}}$ satisfies
\begin{equation}\label{eq43}
\mathcal{L}_{f_{n_j}}^*(\mu_{F_{n_j}})=e^{P(F_{n_j})}\mu_{F_{n_j}}
\end{equation}
for any $j\in\mathbb{N}$. Now we take the weak limit respectively of the two sequences of measures at both sides of (\ref{eq43}) by letting  $j\rightarrow\infty$. For the sequence at the left side, since $\{\mu_{F_{n_j}}\}_{j\in\mathbb{N}}$ converges weakly to $\mu$, according to Lemma \ref{lem11}, we have
\begin{equation}\label{eq44}
\mathcal{L}_{f_{n_j}}^*(\mu_{F_{n_j}})\stackrel{w}{\rightarrow}\mathcal{L}_f^*(\mu)
\end{equation}
as $j\rightarrow\infty$. For the sequence at the right side, since  $\lim_{j\rightarrow\infty}P(F_{n_j})=P(F)$, obviously we have
\begin{equation}\label{eq45}
e^{P(F_{n_j})}\mu_{F_{n_j}}\stackrel{w}{\rightarrow}e^{P(F)}\mu
\end{equation}
as $j\rightarrow\infty$. Now (\ref{eq44}) and (\ref{eq45}) together give that
\begin{center}
$\mathcal{L}_f^*(\mu)=e^{P(F)}\mu$.
\end{center}
Since the eigenmeasure is unique, it turns out that $\mu=\mu_{F}$, which implies
\begin{center}
$\nu=\mu\circ\pi^{-1}=\mu_{F}\circ\pi^{-1}=m_{F}$.
\end{center}

\end{proof}

Now combining  Lemma \ref{lem13}, \ref{lem14} and Corollary \ref{cor3}, we have:
\begin{theorem}\label{thm11}
For an infinite conformal-like and finitely primitive GDMS 
\begin{center}
$S=\big\{V, E, A, t, i, \{X_v\}_{v\in V}, \{s_e\}_{e\in E}, \gamma\big\}$ 
\end{center}
and a summable H\"older family of functions $F$,  the sequence of conformal measures $\{m_{F_n}\}_{n\geq N_\Lambda}$ converges weakly to the conformal measure $m_F$ as $n\rightarrow\infty$.
\end{theorem}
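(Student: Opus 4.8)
The plan is to read the statement directly off Billingsley's Corollary~\ref{cor3}, applied to the sequence $\{m_{F_n}\}_{n\ge N_\Lambda}$ regarded as a sequence in $\mathcal{M}(X)$, where $X=\oplus_{v\in V}X_v$ is the ambient space of the CGDMS (a finite disjoint union of compact metric spaces, hence itself compact, separable and complete). Corollary~\ref{cor3} has exactly two hypotheses: that the sequence $\{m_{F_n}\}$ is tight, and that every weakly convergent subsequence of it has one and the same limit; granting these, it yields weak convergence of the whole sequence to that common limit. So the entire proof reduces to checking these two hypotheses and identifying the common limit as $m_F$.

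First I would invoke Lemma~\ref{lem13}, which is precisely the first hypothesis: tightness of $\{m_{F_n}\}_{n\ge N_\Lambda}$ on $J$, and hence on $X$, since the compact witnessing sets produced there lie inside $J\subset X$. (Recall that this rests on the summability of $F$, on the boundedness of $\{e^{-P(F_n)}\}_{n\in\mathbb{N}}$ coming from $P(F_n)\uparrow P(F)$, and on the compactness of each $X_v$, which makes the cylinders $\pi([e_i])$ compact.) Next I would use Lemma~\ref{lem14} for the second hypothesis: any weak limit $\nu\in\mathcal{M}(X)$ of a subsequence of $\{m_{F_n}\}$ must equal $m_F$, the argument being to pull everything back to the symbolic space, pass to the weak limit in the eigenmeasure identity $\mathcal{L}_{f_{n_j}}^*(\mu_{F_{n_j}})=e^{P(F_{n_j})}\mu_{F_{n_j}}$ using the weak continuity of the transfer operator from Lemma~\ref{lem11} together with $P(F_{n_j})\to P(F)$, and then appealing to uniqueness of the eigenmeasure of $\mathcal{L}_f^*$ with eigenvalue $e^{P(F)}$. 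Tightness also guarantees, via Prohorov's Theorem~(a), that weakly convergent subsequences actually exist, so the second hypothesis is not vacuous.

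With both hypotheses in hand and the common limit identified as $m_F$, Corollary~\ref{cor3} gives $m_{F_n}\stackrel{w}{\rightarrow}m_F$ as $n\to\infty$, which is the assertion. I do not expect a genuinely new obstacle at this stage: all the substantive work has already been carried out in Lemmas~\ref{lem11}, \ref{lem13} and \ref{lem14}. The one point that deserves a moment's care is that Corollary~\ref{cor3} must be applied on the full compact ambient space $X$ rather than on the limit set $J$, which in the infinite setting need not be closed; since the $m_{F_n}$ and $m_F$ are all Borel probability measures on $X$ and the compact set $X_c$ produced in the proof of Lemma~\ref{lem13} satisfies $X_c\subset J\subset X$, tightness, and hence the whole argument, take place without difficulty in $\mathcal{M}(X)$.
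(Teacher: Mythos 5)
Your proposal is correct and is essentially identical to the paper's argument: the paper proves Theorem \ref{thm11} by exactly this combination of Lemma \ref{lem13} (tightness), Lemma \ref{lem14} (uniqueness of subsequential weak limits via the eigenmeasure identity and Lemma \ref{lem11}) and Billingsley's Corollary \ref{cor3}. Your added remark about applying the corollary on the compact ambient space $X$ rather than on $J$ is a reasonable point of care but does not change the substance.
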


The above result can be strengthened to the following form.

\begin{theorem}\label{thm12}
For an infinite conformal-like and finitely primitive GDMS
\begin{center}
$S=\big\{V, E, A, t, i, \{X_v\}_{v\in V}, \{s_e\}_{e\in E}, \gamma\big\}$ 
\end{center}
and a summable H\"older family of functions $F$,  the sequence of conformal measures $\{m_{F_n}\}_{n\geq N_\Lambda}$ converges setwisely to the conformal measure $m_F$ as $n\rightarrow\infty$.
\end{theorem}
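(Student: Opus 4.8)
The plan is to promote the weak convergence $m_{F_n}\stackrel{w}{\rightarrow}m_F$ of Theorem \ref{thm11} to setwise convergence by exploiting the conformality relation in Definition \ref{def3}, following the scheme of the proof of Theorem \ref{lem2} in Section \ref{sec6}.

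\textbf{Step 1: convergence of integrals over cylinders.} I would prove, by induction on $|\omega|$, that
$$\int_{\pi([\omega])} g\, dm_{F_n} \longrightarrow \int_{\pi([\omega])} g\, dm_F$$
as $n\to\infty$, for every continuous $g$ on $X$ and every admissible word $\omega\in E^{*,A}$ (with the convention $\pi([\emptyset])=J$ for the empty word). The base case $|\omega|=0$ is precisely the weak convergence supplied by Theorem \ref{thm11}. For the inductive step, write $\omega=\omega_1\tau$ with $|\tau|=|\omega|-1$; up to a set of $m_{F_n}$- and $m_F$-measure zero, controlled by the overlap condition (a3) of Definition \ref{def3}, one has $\pi([\omega])=s_{\omega_1}(\pi([\tau]))$, so the integrated (change-of-variables) form of the conformality relation (a2) for the single edge $\omega_1$ gives
$$\int_{\pi([\omega])} g\, dm_{F_n} = \int_{\pi([\tau])} (g\circ s_{\omega_1})\exp\bigl(f^{(\omega_1)}-P(F_n)\bigr)\, dm_{F_n}.$$
Since $P(F_n)\to P(F)$ by \cite[Theorem 2.1.5, Proposition 3.1.4]{MU3} and $f^{(\omega_1)}$ is bounded, the integrand converges uniformly on $X_{t(\omega_1)}$ to $h:=(g\circ s_{\omega_1})\exp(f^{(\omega_1)}-P(F))$; writing
$$\Bigl|\int_{\pi([\tau])}\! h_n\, dm_{F_n} - \int_{\pi([\tau])}\! h\, dm_F\Bigr| \le \|h_n-h\|_\infty + \Bigl|\int_{\pi([\tau])}\! h\, dm_{F_n} - \int_{\pi([\tau])}\! h\, dm_F\Bigr|$$
and applying the inductive hypothesis to the continuous function $h$ and the shorter word $\tau$ closes the induction. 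Taking $g\equiv 1$ yields $m_{F_n}(\pi([\omega]))\to m_F(\pi([\omega]))$ for every $\omega$, hence convergence of $m_{F_n}$ to $m_F$ on the algebra generated by the projective cylinders.

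\textbf{Step 2: extension to all Borel sets.} Convergence on a generating algebra alone does not force setwise convergence (cf.\ Example \ref{exm4}), so I would run the covering argument from the proof of Lemma \ref{lem4}: for a closed set $A\subset J$ one has $A=\bigcap_k\bigcup_{\omega\in B_k}s_\omega(X_{t(\omega)})$ with $B_k=\{\omega\in E^{k,A}:s_\omega(X_{t(\omega)})\cap A\neq\emptyset\}$, whence $m_{F_n}(A)=\lim_k\sum_{\omega\in B_k}m_{F_n}(\pi([\omega]))$ (overlaps being null). The new feature relative to the CIFS case is that $B_k$ may be infinite; however
$$m_{F_n}\bigl(s_\omega(X_{t(\omega)})\bigr) \le \Bigl(\sup_n e^{-|\omega|P(F_n)}\Bigr)\,\Bigl\|\exp\Bigl(\textstyle\sum_{j} f^{(\omega_j)}\circ s_{\sigma^j(\omega)}\Bigr)\Bigr\|_\infty,$$
and the right-hand side is summable over $\omega\in E^{k,A}$ uniformly in $n$, by summability of $F$ together with the uniform bound on $\{e^{-P(F_n)}\}$ recorded in the proof of Lemma \ref{lem13} — equivalently, the tightness of $\{m_{F_n}\}$ from Lemma \ref{lem13} confines all but an $\epsilon$-fraction of each $m_{F_n}$ to finitely many of these cylinders. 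This domination permits interchanging $\lim_n$ with $\sum_{\omega\in B_k}$ and then with $\lim_k$, giving $m_{F_n}(A)\to m_F(A)$ for closed $A$; inner/outer regularity of the measures then extends this to arbitrary Borel sets, or one invokes \cite[Theorem 2.3]{FKZ} directly.

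\textbf{Main obstacle.} The delicate point is the formulation of the induction in Step 1: one must carry the statement at the level of integrals of continuous functions against $m_{F_n}$ restricted to cylinders, since knowing only $m_{F_n}(\pi([\tau]))\to m_F(\pi([\tau]))$ would not control $\int_{\pi([\tau])} h\, dm_{F_n}$ — the product of two sequences with convergent integrals need not have a convergent integral of the product. The second genuine point of care is the double-limit interchange in Step 2 in the presence of infinitely many edges, which is exactly where summability of $F$ and the tightness of Lemma \ref{lem13} enter.
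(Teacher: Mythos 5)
Your proposal follows the same route as the paper's proof: an induction on the level of cylinder sets driven by the conformality relation (a2) of Definition \ref{def3}, the weak convergence of Theorem \ref{thm11} and the convergence $P(F_n)\to P(F)$, followed by an extension from cylinders to general Borel sets via the covering argument of Lemma \ref{lem4} or \cite[Theorem 2.3]{FKZ}. The one substantive difference is that you carry a stronger inductive hypothesis --- convergence of $\int_{\pi([\tau])}g\,dm_{F_n}$ for \emph{every} continuous $g$, rather than only of $m_{F_n}(\pi([\tau]))$ --- and this is a genuine improvement: the paper's inductive step combines $\int\exp(f^{(\omega_1)}-P(F_n))\,dm_{F_n}\to\int\exp(f^{(\omega_1)}-P(F))\,dm_F$ with $m_{F_n}(\pi([\omega_2\cdots\omega_k]))\to m_F(\pi([\omega_2\cdots\omega_k]))$ and asserts convergence of the integral of the product, which is not a valid inference from those two facts alone; your strengthened hypothesis closes exactly this gap at no extra cost, since the base case is still just Theorem \ref{thm11}. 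Your Step 2 observation that $B_k$ is infinite when $E$ is infinite, so that interchanging $\lim_n$ with the sum over $B_k$ requires the uniform summability coming from $\sum_e\Vert\exp(f^{(e)})\Vert_\infty<\infty$ and the bound on $\{e^{-P(F_n)}\}$ (equivalently, the tightness of Lemma \ref{lem13}), likewise addresses a point the paper passes over when it ``replays'' Lemma \ref{lem4}. Both write-ups remain sketchy at the very last passage: monotone approximation $U_k\downarrow A$ of a closed set only yields $\limsup_n m_{F_n}(A)\le m_F(A)$ unless one has some uniformity in $k$, and the subsequent appeal to regularity needs a matching lower bound; both you and the paper ultimately lean on \cite[Theorem 2.3]{FKZ} for this, so you are no worse off than the original there.
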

\begin{proof}
Equipped with all the proved results, the proof is similar to Proof of Theorem \ref{lem2}. We first show that 
\begin{equation}\label{eq46}
\lim_{n\rightarrow\infty} m_{F_n}(A)=m_F(A)
\end{equation}
for  $A$ being a cylinder set. We do  this by induction on the level of the cylinder sets. First we establish the initiative step. In virtue of Theorem \ref{thm11} and \cite[Theorem 2.1.5]{MU3}, for any $e\in E$, we have
\begin{equation}
\begin{array}{ll}
\lim_{n\rightarrow\infty} m_{F_n}(\pi([e])) & =\lim_{n\rightarrow\infty} \int_{X_{t(e)}} \exp(f^{(e)}-P(F_n))dm_{F_n}\\
& =\lim_{n\rightarrow\infty} \int_{X_{t(e)}} \exp(f^{(e)}-P(F))dm_{F_n}\\
 & =\lim_{n\rightarrow\infty} \int_{X_{t(e)}} \exp(f^{(e)}-P(F))dm_F\\
 & =m_F(\pi([e])).
\end{array}
\end{equation}

Now suppose (\ref{eq46}) holds on any level $k-1$ cylinder set for $k\geq 2$, we will show that it still holds on any level $k$ cylinder set. For any $k$-word $\omega=\omega_1,\omega_2,\cdots,\omega_k\in E^{k,A}$,  note that
\begin{center}
$m_{F_n}(\pi([\omega]))=\int_{\pi([\omega_2\omega_3\cdots \omega_k])}\exp(f^{(\omega_1)}-P(F_n))dm_n $
\end{center}
and
\begin{center}
$m_F(\pi([\omega]))=\int_{\pi([\omega_2\omega_3\cdots \omega_k])}\exp(f^{(\omega_1)}-P(F))dm_F $.
\end{center}
Again in virtue of Theorem \ref{thm11} and \cite[Theorem 2.1.5]{MU3}, we have
\begin{equation}\label{eq47}
\begin{array}{ll}
& \lim_{n\rightarrow\infty} \int_X \exp(f^{(\omega_1)}-P(F_n))dm_n \\
=& \lim_{n\rightarrow\infty} \int_X \exp(f^{(\omega_1)}-P(F))dm_{F_n}\\
=&\int_X \exp(f^{(\omega_1)}-P(F))dm_F.
\end{array}
\end{equation}

According to the inductive assumption on $k-1$ level cylinder sets, we have
\begin{equation}\label{eq48}
\begin{array}{ll}
\lim_{n\rightarrow\infty}m_{F_n}(\pi([\omega_2\omega_3\cdots \omega_k])) & =\lim_{n\rightarrow\infty} \int_X 1_{\pi([\omega_2\omega_3\cdots \omega_k])}dm_{F_n} \\
 & = m_F(\pi([\omega_2\omega_3\cdots\omega_k])) \\
 & = \int_X 1_{\pi([\omega_2\omega_3\cdots\omega_k])}dm_F.
\end{array}
\end{equation}
Combining (\ref{eq47}) and (\ref{eq48}) together, we have
\begin{center}
$
\begin{array}{ll}
& \lim_{n\rightarrow\infty}m_{F_n}(\pi([\omega_1\omega_2\cdots \omega_k]))\\
=& \lim_{n\rightarrow\infty} \int_X \exp(f^{(\omega_1)}-P(F_n))\cdot 1_{\pi([\omega_2\omega_3\cdots \omega_k])} dm_{F_n} \\
= & \lim_{n\rightarrow\infty} \int_X \exp(f^{(\omega_1)}-P(F))\cdot 1_{\pi([\omega_2\omega_3\cdots \omega_k])}dm_{F_n} \\
= & \int_X \exp(f^{(\omega_1)}-P(F))\cdot 1_{\pi([\omega_2\omega_3\cdots \omega_k])}dm_F\\
= & m_F(\pi([\omega_1\omega_2\cdots \omega_k])),
\end{array}
$
\end{center}
which completes the inductive step. 

Finally, we can first extend (\ref{eq46}) from cylinder sets to closed measurable ones, and then to all measurable  ones via regularity of the measures, or we can directly reach the conclusion in virtue of \cite[Theorem 2.3]{FKZ}. 

\end{proof}

Note that we have in fact proved the following result simultaneously.

\begin{corollary}
For an infinite conformal-like and finitely primitive GDMS and a summable H\"older family of functions $F$,  the sequence of conformal measures $\{\mu_{F_n}\}_{n\geq N_\Lambda}$ converges setwisely to the conformal measure $\mu_F$ as $n\rightarrow\infty$ on the symbolic space.
\end{corollary}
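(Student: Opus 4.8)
The plan is to observe that the proof of Theorem~\ref{thm12} never really leaves the symbolic space: every step there is carried out through the projective cylinder sets $\pi([\omega])$, and one has the identification $m_{F_n}(\pi([\omega]))=\mu_{F_n}([\omega])$ (and likewise $m_F(\pi([\omega]))=\mu_F([\omega])$), coming from $\mu_{F_n}=m_{F_n}\circ\pi$, $\mu_F=m_F\circ\pi$ together with the fact that $m_F$-a.e.\ point of $J$ has a unique $\pi$-preimage. Hence, rereading that argument, the induction on the level of cylinders already establishes
\[
\lim_{n\rightarrow\infty}\mu_{F_n}([\omega])=\mu_F([\omega])
\]
for every admissible finite word $\omega\in E^{*,A}$: the base case uses Definition~\ref{def3}~(a2), Theorem~\ref{thm11}, and the convergence $P(F_n)\rightarrow P(F)$ from \cite[Theorem 2.1.5]{MU3}, and the inductive step combines the analogues of (\ref{eq47}) and (\ref{eq48}) exactly as before, only now interpreted directly on $E^{\infty,A}$ rather than pushed onto $X$.

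It then remains to upgrade convergence on the semi-algebra of cylinder sets to convergence on the whole $\sigma$-algebra on $E^{\infty,A}$ generated by the cylinders. I would do this exactly as at the end of the proofs of Lemma~\ref{lem4} and Theorem~\ref{lem2}: first pass from cylinders to closed subsets of $E^{\infty,A}$ by approximating a closed set from outside by finite unions of cylinders, where the compactness of $(E^{\infty,A},\rho_c)$, a compact metric space of diameter $1$, makes this routine; then pass from closed sets to all Borel sets using the outer/inner regularity of the Borel measures $\mu_{F_n}$ and $\mu_F$. Alternatively, since the cylinders form a convergence-determining class, the conclusion follows in one stroke from \cite[Theorem 2.3]{FKZ}, which is the cleanest route and the one I would record.

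The only point that needs a little care, and thus the main (minor) obstacle, is making the extension from cylinders to arbitrary measurable sets uniform in $n$: pointwise convergence $\mu_{F_n}(A)\rightarrow\mu_F(A)$ on a generating semi-algebra does not by itself give setwise convergence without some regularity or tightness input, precisely because setwise convergence is genuinely stronger than weak convergence (cf.\ Examples~\ref{exm1} and \ref{exm4}). Here tightness is automatic, since each $\mu_{F_n}$ lives on the compact space $E^{\infty,A}$, so the $\epsilon$-approximation of a closed set by a finite sub-cylinder cover can be chosen once and for all, after which the cylinder-level convergence finishes the estimate; this is exactly the content invoked through \cite[Theorem 2.3]{FKZ}. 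Everything else is a transcription of the proof of Theorem~\ref{thm12} read on the symbolic side.
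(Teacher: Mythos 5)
Your proposal matches the paper's own treatment: the paper proves this corollary simply by remarking that the inductive cylinder-by-cylinder argument in the proof of Theorem~\ref{thm12} is carried out via the identification $m_{F_n}(\pi([\omega]))=\mu_{F_n}([\omega])$ and hence establishes the symbolic-space statement simultaneously, with the same extension from cylinders to all Borel sets via regularity or \cite[Theorem 2.3]{FKZ}. Your elaboration of why the induction transfers verbatim to $E^{\infty,A}$ is exactly the content the paper leaves implicit, so the approach is essentially identical.
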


By a similar argument as to prove Theorem \ref{thm10}, we can show that, under the context of Theorem \ref{thm12}, we have the following result.
\begin{proposition}
Any of the four sequences of measures 
\begin{center}
$\{m_{F_n}\}_{n=N_\Lambda}^\infty$, $\{m_{F_n}^*\}_{n=N_\Lambda}^\infty$, $\{\mu_F\}_{n=N_\Lambda}^\infty$ and $\{\mu_{F_n}^*\}_{n=N_\Lambda}^\infty$
\end{center}
does not converge under the TV topology on $\mathcal{M}(J)$, because
\begin{center}
$\Vert m_{F_{n_1}}-m_{F_{n_2}}\Vert_{TV}=1$
\end{center}
and 
\begin{center}
$\Vert \mu_{F_{n_1}}-\mu_{F_{n_2}}\Vert_{TV}=1$
\end{center}
for any two integers $N_\Lambda\leq n_1<n_2\in\mathbb{N}$.
\end{proposition}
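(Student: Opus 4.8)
The plan is to transport the argument of Theorem \ref{thm10} from CIFS to the CGDMS setting: I will show that any two of the truncated conformal measures are mutually singular, so that their total variation distance equals $1$, and then observe that a sequence whose distinct terms are pairwise at distance $1$ cannot converge. Fix $N_\Lambda\leq n_1<n_2$. Since $E_{n_1}\subsetneq E_{n_2}$, choose an edge $e'\in E_{n_2}\setminus E_{n_1}$. The first point to establish is that the cylinder $[e']$ is non-empty inside $E_{n_2}^{\infty,A}$: because the witnessing set of finite primitiveness satisfies $\Lambda\subset E_{N_\Lambda}^{*,A}\subseteq E_{n_1}^{*,A}$, there is $\omega\in\Lambda$ with $e'\omega e'\in E^{*,A}$, and concatenating infinitely many copies of $e'\omega$ produces an admissible infinite word over $E_{n_2}$ beginning with $e'$. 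By the eigenmeasure/Gibbs construction of \cite[Theorem 3.2.3]{MU3}, the measure $\mu_{F_{n_2}}$ has full topological support on $E_{n_2}^{\infty,A}$, hence $\mu_{F_{n_2}}([e'])>0$. Consequently the set $D:=\{\omega\in E_{n_2}^{\infty,A}:\omega_1\in E_{n_2}\setminus E_{n_1}\}\supseteq[e']$ has $\mu_{F_{n_2}}(D)>0$, and, by the equivalence $\mu_{F_{n_2}}^{*}\sim\mu_{F_{n_2}}$, also $\mu_{F_{n_2}}^{*}(D)>0$; note moreover that $E_{n_2}^{\infty,A}\setminus E_{n_1}^{\infty,A}=\bigcup_{k\geq 0}\sigma^{-k}(D)$.

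Next I would invoke Birkhoff's ergodic theorem for the ergodic system $(E_{n_2}^{\infty,A},\sigma,\mu_{F_{n_2}}^{*})$ applied to the bounded observable $\mathbf{1}_{D}$: for $\mu_{F_{n_2}}^{*}$-a.e.\ $\omega$ the frequency $\tfrac1N\sum_{k=0}^{N-1}\mathbf{1}_{D}(\sigma^k\omega)$ converges to $\mu_{F_{n_2}}^{*}(D)>0$, so $\mu_{F_{n_2}}^{*}$-a.e.\ $\omega$ has $\sigma^k\omega\in D$ for infinitely many $k$, i.e.\ $\omega_j\in E_{n_2}\setminus E_{n_1}$ for infinitely many $j$, and therefore $\omega\notin E_{n_1}^{\infty,A}$. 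Hence $\mu_{F_{n_2}}^{*}(E_{n_1}^{\infty,A})=0$, and by equivalence $\mu_{F_{n_2}}(E_{n_1}^{\infty,A})=0$. Since condition (a3) of Definition \ref{def3} forces $\mu_{F_{n_2}}$-a.e.\ point of $E_{n_2}^{\infty,A}$ to have a unique coding, pushing forward by $\pi$ gives $m_{F_{n_2}}(J_{n_1})=0$, and likewise $m_{F_{n_2}}^{*}(J_{n_1})=0$.

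From here the conclusion is immediate. Taking $A=J_{n_1}$, on which $m_{F_{n_1}}$ has full mass, gives $\Vert m_{F_{n_1}}-m_{F_{n_2}}\Vert_{TV}\geq|1-0|=1$, and the reverse inequality is automatic; the identical computation with $A=E_{n_1}^{\infty,A}$ yields $\Vert\mu_{F_{n_1}}-\mu_{F_{n_2}}\Vert_{TV}=1$. For the ergodic versions, $m_{F_{n_1}}^{*}\sim m_{F_{n_1}}$ is concentrated on $J_{n_1}$ while $m_{F_{n_2}}^{*}\sim m_{F_{n_2}}$ is concentrated, up to a null set, on $J_{n_2}\setminus J_{n_1}$, so $m_{F_{n_1}}^{*}\perp m_{F_{n_2}}^{*}$ and the distance is again $1$; the same holds for $\mu_{F_{n_1}}^{*}$ and $\mu_{F_{n_2}}^{*}$. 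Thus in each of the four sequences every pair of distinct terms is at TV distance exactly $1$, so none of them is Cauchy in $(\mathcal{M}(J),\Vert\cdot\Vert_{TV})$, and hence none converges.

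\textbf{Main obstacle.} The delicate step — and the point where finite primitiveness is genuinely used — is establishing $\mu_{F_{n_2}}([e'])>0$ for a newly adjoined edge $e'$: this requires that the $F_{n_2}$-conformal measure be a fully supported Gibbs state, which \cite[Theorem 3.2.3]{MU3} provides only after one checks that $A$ restricted to $E_{n_2}\times E_{n_2}$ remains finitely primitive, i.e.\ Lemma \ref{lem12}. Everything else is a routine translation between the symbolic and geometric pictures together with the Birkhoff argument replacing the ergodicity step in the proof of Theorem \ref{thm10}.
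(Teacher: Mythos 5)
Your proof is correct and follows essentially the same route as the paper, which simply refers back to the singularity argument of Theorem \ref{thm10}: show that $\mu_{F_{n_2}}^{*}$ (hence $\mu_{F_{n_2}}$) annihilates $E_{n_1}^{\infty,A}$ via ergodicity, push forward, and conclude mutual singularity so that each pairwise TV distance is $1$ and no subsequence is Cauchy. The only differences are cosmetic — you use Birkhoff's theorem where the paper uses the zero–one dichotomy for the forward-invariant set $\tilde I^{\infty}$, and you add the (correct and welcome) checks that the new cylinder $[e']$ is nonempty with positive Gibbs measure and that a.e.\ unique coding lets one pass from the symbolic to the geometric statement.
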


For a measure $\nu\in\mathcal{M}(E^{\infty,A})$, still let $h_\nu(\sigma)$ and $\lambda_\nu(\sigma)$ be its entropy and average Lyapunov exponent.  Theorem \ref{thm12} induces the following result on dimensions of these measures, regardless of whether $h_{\mu_F}(\sigma)=\infty$ or $<\infty$.
\begin{theorem}\label{thm13}
For an infinite conformal-like and finitely primitive GDMS
\begin{center}
$S=\big\{V, E, A, t, i, \{X_v\}_{v\in V}, \{s_e\}_{e\in E}, \gamma\big\}$ 
\end{center}
and a summable H\"older family of functions $F$,  we have
\begin{center}
$dim_H^0 m_F=dim_H^1 m_F=dim_H^0 m_F^*=dim_H^1 m_F^*=\lim_{n\rightarrow\infty}\cfrac{h_{\mu_{F_n}^*(\sigma)}}{\lambda_{\mu_{F_n}^*(\sigma)}}$.
\end{center}
\end{theorem}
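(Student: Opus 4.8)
The plan is to push the dimension information from the finite subsystems $S_n$ — where the entropy is automatically finite — up to the infinite system, using the setwise convergence established in Theorem \ref{thm12} together with the two semicontinuity results, Theorem \ref{thm1} and Theorem \ref{thm4}.

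First I would fix $n\ge N_\Lambda$, so that by Lemma \ref{lem12} the finite subsystem $S_n$ carries a unique $F_n$-conformal measure $m_{F_n}$, with symbolic eigenmeasure $\mu_{F_n}=m_{F_n}\circ\pi$, equivalent $\sigma$-ergodic measure $\mu_{F_n}^*$, and pushforward $m_{F_n}^*=\mu_{F_n}^*\circ\pi^{-1}$ on $J$. Since $\#E_n<\infty$, one has $h_{\mu_{F_n}^*}(\sigma)\le\log\#E_n<\infty$, and, as $\{-\log|s_e'|\}_{e\in E_n}$ is a finite family of bounded functions bounded below by $-\log\gamma>0$, also $0<-\log\gamma\le\lambda_{\mu_{F_n}^*}(\sigma)<\infty$. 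Hence the volume lemma for finite CGDMS (\cite[Theorem 4.4.2, Corollary 4.4.6]{MU3}), combined with Young's Lemma (the shift $\sigma$ is not inverse-dimension-expanding), Corollary \ref{cor2} and \cite[Lemma 3.1(3)]{HS}, yields a common value
\[
a_n:=dim_H^0 m_{F_n}=dim_H^1 m_{F_n}=dim_H^0 m_{F_n}^*=dim_H^1 m_{F_n}^*=\frac{h_{\mu_{F_n}^*}(\sigma)}{\lambda_{\mu_{F_n}^*}(\sigma)}.
\]

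Next, by Theorem \ref{thm12} we have $m_{F_n}\stackrel{s}{\rightarrow}m_F$ in $\mathcal{M}(J)$, so Theorem \ref{thm1} gives $\liminf_{n\to\infty}dim_H^1 m_{F_n}\ge dim_H^1 m_F$ and Theorem \ref{thm4} gives $\limsup_{n\to\infty}dim_H^0 m_{F_n}\le dim_H^0 m_F$. With the first step and the trivial bound $dim_H^0\le dim_H^1$ this produces
\[
\limsup_{n\to\infty}a_n\le dim_H^0 m_F\le dim_H^1 m_F\le\liminf_{n\to\infty}a_n,
\]
and since $\liminf_n a_n\le\limsup_n a_n$ always holds, every inequality here is an equality; in particular $\lim_{n\to\infty}a_n$ exists and equals $dim_H^0 m_F=dim_H^1 m_F$. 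I would emphasize that the existence of the limit on the right-hand side of the statement is not an assumption but a byproduct of this squeeze — unlike the CIFS case of Theorem \ref{thm3}, no monotonicity argument through $J_n\subset J_{n+1}$ is available or needed, because for a general family $F$ the measure $m_F$ need not be of full dimension $HD(J)$. Finally, $m_F^*$ is equivalent to $m_F$ (\cite[Corollary 2.7.5]{MU3}) and $\mu_F^*$ is $\sigma$-ergodic, so Corollary \ref{cor2} and \cite[Lemma 3.1(3)]{HS} (or Young's Lemma) give $dim_H^0 m_F^*=dim_H^1 m_F^*=dim_H^0 m_F=dim_H^1 m_F$, and assembling the three steps completes the proof.

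The hard part is not this argument but Theorem \ref{thm12}, which has already been established; within the present proof the only step requiring genuine care is the verification in the first step that the finite-subsystem dimensions really are the entropy-to-Lyapunov ratio in the general $F$-conformal setting (rather than only for $t$-conformal measures) — i.e.\ finiteness of the entropy and finiteness and positivity of the Lyapunov exponent over each truncated alphabet — after which everything reduces to the short squeeze above.
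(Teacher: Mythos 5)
Your proposal is correct and follows essentially the same route as the paper: the squeeze via Theorem \ref{thm12} combined with the semicontinuity results Theorem \ref{thm1} and Theorem \ref{thm4}, the transfer between conformal and equivalent ergodic measures via Corollary \ref{cor2} and \cite[Lemma 3.1(3)]{HS}, and the volume lemma of \cite{MU3} applied to the finite-entropy truncations to identify each $a_n$ with the entropy-to-Lyapunov ratio. Your explicit remark that existence of $\lim_n a_n$ falls out of the squeeze (rather than from monotonicity as in the CIFS case) is a point the paper leaves implicit, but it is the same argument.
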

\begin{proof}
Considering all the established results Corollary \ref{cor2}, \cite[Lemma 3.1(3)]{HS} and Theorem \ref{thm12}, the proof of the above equalities except the last one is of the same as Proof of Theorem \ref{thm3}. The last equality follows from the above results and \cite[Corollary 4.4.5]{MU3}, since the corresponding ergodic measure $\mu^*_{F_n}$ originated from each truncated finite sub-GDMS $S_n$ has finite entropy for any $n\geq N_\Lambda$.
\end{proof}
Compare the result with \cite[Corollary 4.4.5]{MU3}.

Now we apply the above results for infinite conformal-like and finitely primitive GDMS with summable H\"older family of functions to some kind of CGDMS-the regular finitely primitive CGDMS (there is no need to emphasize conformal-like as any CGDMS is conformal-like).  For any $t\geq 0$, consider the family of functions
\begin{center}
$F_t=\{\log |s_e'|^t\}_{e\in E}$.
\end{center}
It is a H\"older  family of order $-\log\gamma$.   Let $P(t)=P(F_t)$. We collect some results from \cite[Chapter 4]{MU3} in the following theorem.
\begin{theorem}[Mauldin-Urba\'nski]
For a finitely primitive CGDMS, a $t$-conformal measure $\mathbf{m}$ exists if and only if $P(t)=0$. In case of existence of such a measure, we have
\begin{center}
$\mathbf{m}=m_{F_t}$
\end{center}
and
\begin{center}
$t=HD(J)=h$.
\end{center}
\end{theorem}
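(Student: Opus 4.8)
The statement is the specialization of Mauldin and Urba\'nski's $F$-conformal theory to the geometric potential family $F_t=\{\log|s_e'|^t\}_{e\in E}$, and the plan is to read it off from the results already recorded in the excerpt together with \cite[Chapter~4]{MU3}. First I would check that $F_t$ is a summable H\"older family of order $-\log\gamma$: the BDP inequality $|s_\omega'(x)|\le K|s_\omega'(y)|$ on $X_{t(\omega)}$ gives $|\log|s_{\sigma(\omega)}'(x)|^t-\log|s_{\sigma(\omega)}'(y)|^t|\le t\log K$, hence $\sup_n V_n(F_t)<\infty$, while for every $t$ in the finiteness range of $P$ (in particular for $t=HD(J)$ when the system is regular) one has $\sum_{e\in E}\|\exp(\log|s_e'|^t)\|_\infty=\sum_{e\in E}\|s_e'\|_\infty^t<\infty$. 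With $P(t):=P(F_t)$, condition (a2) of Definition~\ref{def3} for $F_t$ reads $m(s_e(B))=\int_B e^{-P(t)}|s_e'|^t\,dm$, which coincides with the defining relation of a $t$-conformal measure on the CGDMS \emph{exactly} when $P(t)=0$, while (a3) is the same disjointness condition. So, once $P(t)=0$, ``$t$-conformal'' and ``$F_t$-conformal'' mean the same thing, and \cite[Theorem~3.2.3]{MU3} (applicable because the system is finitely primitive and $F_t$ is a summable H\"older family) yields both existence and uniqueness of such a measure, equal to $m_{F_t}$.

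For the converse, suppose a $t$-conformal measure $\mathbf m$ exists. Iterating the conformal relation along an admissible word gives $\mathbf m(s_\omega(X_{t(\omega)}))=\int_{X_{t(\omega)}}|s_\omega'|^t\,d\mathbf m$ for every $\omega\in E^{*,A}$, so the BDP sandwiches it:
\[
K^{-t}\,\big(\inf_{v\in V}\mathbf m(X_v)\big)\,\|s_\omega'\|^t\;\le\;\mathbf m(s_\omega(X_{t(\omega)}))\;\le\;\|s_\omega'\|^t .
\]
Iterating the disjointness condition shows that, for fixed $n$, the images $\{s_\omega(X_{t(\omega)})\}_{\omega\in E^{n,A}}$ are pairwise disjoint modulo $\mathbf m$-null sets, and since their union carries $\mathbf m$-full measure one gets $\sum_{\omega\in E^{n,A}}\mathbf m(s_\omega(X_{t(\omega)}))=1$ for all $n$. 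Combined with the display, this pins $\sum_{\omega\in E^{n,A}}\|s_\omega'\|^t$ between two constants independent of $n$; since the H\"older property makes $Z_n(f_t)$ comparable to this sum up to a fixed multiplicative factor ($f_t$ the amalgamated function of $F_t$), it follows that $P(t)=\lim_{n\to\infty}\tfrac1n\log Z_n(f_t)=0$.

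It remains to identify $t$ with $HD(J)=h$. The map $t\mapsto P(t)$ is strictly decreasing on its finiteness domain, because $\|s_\omega'\|\le\gamma^{|\omega|}$ forces $P(t')-P(t)\le(t'-t)\log\gamma<0$ for $t'>t$; hence $P(t)=0$ has at most one solution, which by definition of a \emph{regular} system is realized precisely at $h=HD(J)$ in the Mauldin-Urba\'nski Bowen formula. Alternatively one reads $HD(J)=t$ off $m_{F_t}$ directly: along $\omega=\pi^{-1}(x)$ the conformal relation and the BDP give $m_{F_t}(B(x,\|s_\omega'\|))\asymp\|s_\omega'\|^t\asymp(\operatorname{diam}s_\omega(X_{t(\omega)}))^t$, so the mass distribution principle gives $HD(J)\ge t$, while covering $J$ by $\{s_\omega(X_{t(\omega)})\}_{\omega\in E^{n,A}}$ and using $\sum_\omega(\operatorname{diam}s_\omega(X_{t(\omega)}))^t\asymp1$ gives $HD(J)\le t$.

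The one genuinely delicate point is the lower bound $\inf_{v\in V}\mathbf m(X_v)>0$ used in the sandwich of the second paragraph, i.e.\ that every vertex carries positive $\mathbf m$-mass. This is exactly where finite primitivity enters: the finitely many words in the witnessing set $\Lambda$ connect any cylinder $[e]$ to a cylinder of positive measure, and since $V$ is finite this propagates positivity of measure to every $X_v$; a small extra argument (as in the proof of Theorem~\ref{thm10}) rules out the degenerate possibility. Apart from this bookkeeping the statement is precisely the content of \cite[Chapter~4]{MU3}, to which we refer for the complete details.
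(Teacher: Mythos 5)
The paper does not actually prove this statement: it is quoted verbatim as a collection of results from \cite[Chapter 4]{MU3}, so there is no in-paper argument to compare against. Your reconstruction is correct and follows exactly the route of that source --- identify $t$-conformality with $F_{t}$-conformality when $P(t)=0$ and invoke \cite[Theorem 3.2.3]{MU3}, use the bounded-distortion sandwich $K^{-t}\min_{v}\mathbf{m}(X_v)\sum_{\omega\in E^{n,A}}\Vert s_\omega'\Vert^{t}\leq 1\leq\sum_{\omega\in E^{n,A}}\Vert s_\omega'\Vert^{t}$ to force $P(t)=0$ in the converse direction, and conclude $t=HD(J)$ via the Bowen formula. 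The only substantive gloss is the upper bound $m_{F_t}(B(x,r))\leq Cr^{t}$ needed for the mass distribution principle: that does not follow from BDP alone but requires the cone/separation geometry to bound the number of $n$-cylinders meeting a ball (and note that the identification of the zero of $P$ with $HD(J)$ is a theorem, not the definition of regularity); since you explicitly defer these details to \cite[Chapter 4]{MU3}, the proposal stands as a faithful account of the cited proof.
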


We call a CGDMS \emph{regular} if $P(t)=0$ admits a solution ($P(t)$ admits at most one zero), or else we call it \emph{irregular}. From now on we always assume our infinite CGDMS is finitely primitive and regular. In virtue of Lemma \ref{lem12}, there exists a unique $h_n$-conformal measure  $\mathbf{m}_n$ for the finite sub-CGDMS 
\begin{center}
$S_n=\big\{V_n, E_n, A|_{E_n\times E_n}, t, i, \{X_v\}_{v\in V_n}, \{s_e\}_{e\in E_n}, \gamma\big\}$
\end{center}
for any $n\geq N_\Lambda$. Similarly, let $\mathbf{m}^*, \{\mathbf{m}_n^*\}_{n\geq N_\Lambda}$ be the equivalent ergodic measures respectively to the conformal ones $\mathbf{m}, \{\mathbf{m}_n\}_{n\geq N_\Lambda}$, and 
\begin{center}
$\bm \mu, \{\bm \mu_n\}_{n\geq N_\Lambda}, \bm \mu^*, \{\bm \mu_n^*\}_{n\geq N_\Lambda}$ 
\end{center}
be the  pull-back measures of the corresponding ones under $\pi$ on the symbolic spaces.

Now apply our Theorem \ref{thm12} to the infinite finitely primitive regular CGDMS with the summable H\"older  family $F_h$, we get the following result.
\begin{corollary}
For an infinite finitely primitive regular CGDMS, we have 
\begin{center}
$\mathbf{m}_n\stackrel{s}{\rightarrow}\mathbf{m}$
\end{center}
as $n\rightarrow\infty$.
\end{corollary}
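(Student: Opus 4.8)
The plan is to read this off from Theorem \ref{thm12}. A CGDMS is automatically conformal-like, and by hypothesis our system is infinite and finitely primitive, so the only datum we must supply in order to invoke Theorem \ref{thm12} is a summable H\"older family of functions. The natural choice, as indicated just above, is $F_h=\{\log|s_e'|^h\}_{e\in E}$ with $h=HD(J)$; it is H\"older of order $-\log\gamma$, and it is summable because, applying Definition \ref{def3} (a2) one edge at a time to the $h$-conformal measure $\mathbf{m}$ (which exists by regularity, with $P(F_h)=P(h)=0$, so $\mathbf{m}=m_{F_h}$) one gets $\sum_{e\in E}\int_{X_{t(e)}}|s_e'|^h\,d\mathbf{m}=1$, and then bounded distortion together with $\min_{v\in V}\mathbf{m}(X_v)>0$ turns this into $\sum_{e\in E}\|s_e'\|^h<\infty$ -- the CGDMS counterpart of Theorem \ref{thm20} (a). Thus Theorem \ref{thm12} is available with $F=F_h$.

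Next I would pin down the two endpoints of the convergence. By the Mauldin--Urba\'nski theorem recalled just before the corollary, a finitely primitive regular CGDMS carries a unique $h$-conformal measure, which equals $m_{F_h}$, and $h=HD(J)$; hence $m_{F_h}=\mathbf{m}$. On the finite side, for $n\ge N_\Lambda$ the truncation $S_n$ is again conformal-like and finitely primitive, so Lemma \ref{lem12} furnishes a unique conformal measure of $S_n$ attached to the restricted family $(F_h)_n$; since $S_n$ is finite it is regular with $HD(J_n)=h_n$, and this measure is precisely what we are calling $\mathbf{m}_n$. Theorem \ref{thm12} then gives
\[
\mathbf{m}_n=m_{(F_h)_n}\ \stackrel{s}{\rightarrow}\ m_{F_h}=\mathbf{m}\qquad (n\to\infty),
\]
which is the assertion; no further work is needed.

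The one place where I expect friction is a bookkeeping point in the second step: the measure Lemma \ref{lem12} produces from $(F_h)_n$ is the conformal measure of $S_n$ for the potential $\log|s_e'|^h-P_n(h)$, and since $h_n<h$ one has $P_n(h)\le 0$, so in general this is \emph{not} the strictly $h_n$-conformal measure of $S_n$. If one prefers to let $\mathbf{m}_n$ denote the strictly $h_n$-conformal measure, Theorem \ref{thm12} can no longer be cited verbatim, since its proof holds the H\"older family fixed whereas here the exponent $h_n$ drifts with $n$. In that case I would instead rerun the cylinder-set induction from the proof of Theorem \ref{thm12} with the $n$-dependent family $\{\log|s_e'|^{h_n}\}_{e\in E_n}$ in the $n$-th slot; the extra ingredients required are $h_n\uparrow h$ (the CGDMS analogue of (\ref{eq5}) and (\ref{eq3}), from $J_n\uparrow J$; see \cite{MU3}), the identity $P_n(h_n)=0$ for every $n$ (which in fact deletes the pressure corrections that complicate Theorem \ref{thm12}), the convergence $P(F_{h_n})\to P(F_h)$ of \cite[Theorem 2.1.5]{MU3}, and a routine strengthening of Lemma \ref{lem11} that allows the potentials $f_n=\log|s_{\cdot}'|^{h_n}$ to vary together with the measures -- valid because $f_n\to f=\log|s_{\cdot}'|^h$ uniformly, with tails controlled uniformly in $n$ by the summability of $F_h$. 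This yields $\mathbf{m}_n\stackrel{w}{\to}\mathbf{m}$ through Lemmas \ref{lem13}, \ref{lem14} and Corollary \ref{cor3}, then setwise convergence on cylinders by the same induction, and finally on all Borel sets via \cite[Theorem 2.3]{FKZ}, exactly as in the proof of Theorem \ref{thm12}.
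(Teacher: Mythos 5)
Your proposal matches the paper's proof, which is exactly the one-line application of Theorem \ref{thm12} to the family $F_h=\{\log|s_e'|^h\}_{e\in E}$ (your preliminary check that $F_h$ is a summable H\"older family is correct, though the paper leaves summability implicit in the regularity hypothesis $P(h)=0<\infty$). The friction point you flag is real but is resolved in the paper's favour by its own definitions: $\mathbf{m}_n$ is introduced via Lemma \ref{lem12} applied to $F_h$, i.e.\ as the $(F_h)_n$-conformal measure (despite being labelled ``$h_n$-conformal''), so Theorem \ref{thm12} applies verbatim and your fallback argument with the drifting exponents $h_n$ is not needed.
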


Applying our Theorem \ref{thm13} to the finitely primitive regular CGDMS with the summable H\"older  family $F_h$, we get Theorem \ref{cor4} instantly, regardless of whether $h_{\mu}(\sigma)=\infty$ or $<\infty$.

\begin{rem}\label{rem2}
According to \cite[Remark 3.2]{CLU}, if 
\begin{center}
$\#V=1$ 
\end{center}
and 
\begin{center}
$A_{ee'}=1$ if and only if $t(e)=i(e')$ for any two edges $e,e'\in E$, 
\end{center}
the CGDMS degenerates into a CIFS, which is always finitely primitive. So the last equality of (\ref{eq42}) follows from the last equality of Theorem \ref{cor4}. 
\end{rem}

As a final result of this section, similar to the case of CIFS, considering the correlation dimension and the modified correlation dimension of the above measures, we have the following result. 

\begin{theorem}\label{thm19}
For any finitely primitive regular CGDMS, we have
\begin{center}
$dim_C \mathbf{m}=dim_C \mathbf{m}^*=dim_{MC} \mathbf{m}=dim_{MC} \mathbf{m}^*=HD(J)=h$.
\end{center}

\end{theorem}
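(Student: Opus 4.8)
The plan is to mimic the proof of Theorem \ref{thm14} for CIFS, adapting the "domination trick" on the limit set to the graph-directed setting. First I would note that when the edge set $E$ is finite, the result is already contained in \cite[Chapter 4]{MU3} (the analogue of \cite[Lemma 3.14]{MU1} for CGDMS): the $h$-conformal measure $\mathbf{m}$ is comparable to $r^h$ on balls of small radius $r$, i.e.\ there is $C>1$ with $C^{-1}\leq \mathbf{m}(B(x,r))/r^h\leq C$ for all $x\in J$ and all small $r$, whence $dim_C\mathbf{m}=dim_{MC}\mathbf{m}=h$ by integrating against $\mathbf{m}$, taking $\log$, dividing by $\log r$ and letting $r\to 0$. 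So the real content is the infinite case, and I would reduce it to the finite case exactly as in the proof of Theorem \ref{thm14}.

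For the infinite case, fix a small $r>0$. Using the summability of $F_h$ (which follows from \cite[Theorem 4.2.9]{MU3} or the analogue of Theorem \ref{thm20}(a): $\sum_{e\in E}\Vert s_e'\Vert^h<\infty$ for a finitely primitive regular CGDMS) together with the uniform contraction constant $\gamma$, choose $\ell\in\mathbb{N}$ large enough that $\sum_{e\in E,\, e\notin E_\ell}\Vert s_e'\Vert^h=o(r^h)$ and $\gamma^\ell\max_{v\in V}\operatorname{diam}(X_v)<r$. Let $k$ be the least integer with $\Vert s_\omega'\Vert<r/2$ for every admissible word $\omega$ of length $k$ built from edges in $E_\ell$, set $\tilde J=\bigcup s_\omega(X_{t(\omega)})$ over such $\omega$, and split $J=\tilde J\cup(J\setminus\tilde J)$. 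The conformality relation in Definition \ref{def3}(a2) (with $P(F_h)=0$) and the BDP give
\begin{equation}
\mathbf{m}(J\setminus\tilde J)\leq \sum_{j=0}^{k-1}\gamma^j\sum_{e\notin E_\ell}\Vert s_e'\Vert^h\leq \frac{1}{1-\gamma}\sum_{e\notin E_\ell}\Vert s_e'\Vert^h=o(r^h),
\end{equation}
so $\tilde J$ carries almost all the $\mathbf{m}$-mass. On the dominating piece $\tilde J$ only finitely many edges ($E_\ell$) and only words of bounded length ($\leq k$) are relevant, so the finite-case estimate of \cite{MU3} applies verbatim: there is $C>1$ with $C^{-1}r^h\leq \mathbf{m}(B(x,r))\leq Cr^h$ for all $x\in\tilde J$. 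Integrating,
\begin{equation}
C^{-1}r^h\leq \int_X \mathbf{m}(B(x,r))\,d\mathbf{m}=\int_{\tilde J}\mathbf{m}(B(x,r))\,d\mathbf{m}+\int_{J\setminus\tilde J}\mathbf{m}(B(x,r))\,d\mathbf{m}\leq Cr^h+o(r^h),
\end{equation}
and the same lower bound holds for $\int_A\mathbf{m}(B(x,r))\,d\mathbf{m}$ for any $A$ with $\mathbf{m}(A)$ close to $1$, since $\tilde J$ can be taken inside $A$ up to $o(r^h)$. Taking logarithms, dividing by $\log r$ and letting $r\to 0$ yields $dim_C\mathbf{m}=dim_{MC}\mathbf{m}=HD(J)=h$. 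Finally, the corresponding statements for $\mathbf{m}^*$ follow from \cite[Theorem 2.6, Theorem 2.10]{MMR} together with the equivalence of $\mathbf{m}$ and $\mathbf{m}^*$.

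The step I expect to be the main obstacle is verifying that the finite-edge comparability estimate $C^{-1}\leq \mathbf{m}(B(x,r))/r^h\leq C$ from \cite{MU3} is genuinely usable on $\tilde J$ with a constant $C$ independent of $r$ (and hence of $\ell$ and $k$, which both grow as $r\to 0$). The key point making this work is that the BDP constant $K$ in property (4) of Definition \ref{def9} is uniform over \emph{all} admissible words, and the ball $B(x,r)$ meets only boundedly many cylinders at the relevant depth because of the uniform lower bound $\xi=\inf\{\vert s_e'\vert : e\in E_\ell\}>0$ on the derivatives of the finitely many maps involved — so the covering/packing argument of \cite[Lemma 3.14]{MU1}, transported through the finitely-primitive incidence structure (which only restricts, never enlarges, the family of admissible continuations), produces a constant depending only on $K$, $\gamma$, $d$ and the geometry of the $X_v$, not on $r$. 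One should double-check that the finitely-primitive hypothesis is what guarantees the "thick" cylinder structure needed here (as opposed to merely finitely irreducible), which is why the statement is phrased for finitely primitive regular CGDMS.
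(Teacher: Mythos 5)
Your proposal is correct and follows essentially the same route as the paper: the paper's own proof is a one-line reference to \cite[Theorem 4.2.11]{MU3} combined with the statement that the argument is the same as that of Theorem \ref{thm14}, and your write-up is precisely that argument (the $\tilde J$ versus $J\setminus\tilde J$ domination trick, the finite-case ball-comparability estimate from \cite{MU3}, and the passage to $\mathbf{m}^*$ via \cite[Theorem 2.6, Theorem 2.10]{MMR} and equivalence of measures) transported to the graph-directed setting. Your closing remarks on the uniformity of the constant $C$ and on the role of finite primitiveness address exactly the points the paper leaves implicit.
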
 
\begin{proof}
Equipped with \cite[Theorem 4.2.11]{MU3}, this is of similar process as the Proof of Theorem \ref{thm14}.
\end{proof}

\section{More applications and some notes on the dimension of a measure and its logarithmic density}\label{sec5}

The applications of our techniques are still not limited to the above mentioned IFS.  Besides the applications to the CIFS and CGDMS, our  continuity results in fact can be applied to more kinds of infinite IFS in which the shift map (or the induced map on the ambient space $X$) has finite or infinite entropy with respect to corresponding measures in ones' concerns. For example, the infinite \emph{parabolic} IFS (with overlaps) in \cite{SSU1, SSU2}, the infinite \emph{random conformal graph directed Markov systems} (RCGDMS) in \cite{RU1}, the infinite \emph{weakly geometrically stable} IFS in \cite{Kae}, the infinite \emph{weakly conformal} IFS (with overlaps) in \cite{FH}, $\cdots$. Results on dimensions of measures originated from these systems are in vision.

We choose to apply our continuity results to dimensions of measures on spaces with IFS structure on it, however, these results  also shed some lights on dimensions of measures on spaces with some general dynamical structures. For example, as a tool to tackle the obstacle of exploding entropy of some measure with respect to some partitions on the $X$ space, we have the following result, as a supplement of \cite[Main Theorem]{You}.
\begin{corollary}\label{cor8}
Let $X$ be a (non-compact) two Riemannian manifold, $T: X\rightarrow X$ is a $C^2$ endomorphism. For an ergodic measure $\nu\in\mathcal{M}(X)$ with respect to $T$, if one can find a sequence of ergodic measures $\{\nu_n\in\mathcal{M}(X)\}_{n\in\mathbb{N}}$  with respect to a corresponding sequence of $C^2$ endomorphisms $\{T_n: X\rightarrow X\}_{n\in\mathbb{N}}$  such that
\begin{center}
$\nu_n\stackrel{s}{\rightarrow}\nu$ 
\end{center}
as $n\rightarrow\infty$ and $0\leq h_{\nu_n}(T_n), \lambda_1(T_n), \lambda_2(T_n)<\infty$ 
for any $n\in\mathbb{N}$, then
\begin{center}
$dim_H^0 \nu=dim_H^1 \nu=\lim_{n\rightarrow\infty} dim_H^0 \nu_n=\lim_{n\rightarrow\infty}  dim_H^1 \nu_n=\lim_{n\rightarrow\infty} h_{\nu_n}(T_n)\Big(\cfrac{1}{\lambda_1(T_n)}-\cfrac{1}{\lambda_2(T_n)}\Big)$,
\end{center}
in which $\lambda_1(T_n)\geq \lambda_2(T_n)$ represent respectively the two Lyapunov exponent of the map $T_n$ and $T_n^{-1}$ (they are constants for \emph{a.e.} $x\in X$ due to ergodicity of $T_n$).
\end{corollary}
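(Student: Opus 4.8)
The plan is to read off the dimension of each approximating measure $\nu_n$ from Young's dimension formula and then pass to the limit using the semi-continuity of $dim_H^0$ and $dim_H^1$, exactly as in the proof of Theorem \ref{thm3}; the key feature is that no dimension formula has to be established for the limit measure $\nu$ itself.

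First I would fix $n$ and apply the Main Theorem of \cite{You} to the system $(X,T_n,\nu_n)$. Since $X$ is a two-dimensional Riemannian manifold, $T_n$ a $C^2$ endomorphism, $\nu_n$ is $T_n$-ergodic, and $0\le h_{\nu_n}(T_n),\lambda_1(T_n),\lambda_2(T_n)<\infty$, the measure $\nu_n$ is exact-dimensional and
\begin{equation*}
dim_H^0\nu_n=dim_H^1\nu_n=h_{\nu_n}(T_n)\Big(\cfrac{1}{\lambda_1(T_n)}-\cfrac{1}{\lambda_2(T_n)}\Big).
\end{equation*}
The one point requiring care here is that Young's theorem is classically phrased for diffeomorphisms of compact surfaces: the non-compactness of $X$ is precisely what the finiteness hypotheses on $h_{\nu_n}(T_n),\lambda_1(T_n),\lambda_2(T_n)$ are there to compensate for, the underlying Pesin-theoretic estimates being local. (The inverse exponents of $T_n$ play the role of the negative exponents in the two-sided setting, which is why the sign convention $\lambda_1(T_n)\ge\lambda_2(T_n)$ makes the right-hand side non-negative.)

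Next I would invoke the setwise convergence $\nu_n\stackrel{s}{\rightarrow}\nu$ together with Theorem \ref{thm1}, Theorem \ref{thm4}, and the universal inequality $dim_H^0\le dim_H^1$ to obtain
\begin{equation*}
\limsup_{n\to\infty} dim_H^0\nu_n\ \le\ dim_H^0\nu\ \le\ dim_H^1\nu\ \le\ \liminf_{n\to\infty} dim_H^1\nu_n.
\end{equation*}
Because $dim_H^0\nu_n=dim_H^1\nu_n$ for every $n$ by the previous step, the two outer terms coincide, so the whole chain collapses: the limit $\lim_{n\to\infty}dim_H^0\nu_n=\lim_{n\to\infty}dim_H^1\nu_n$ exists and equals $dim_H^0\nu=dim_H^1\nu$. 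Substituting the explicit value from the first step then yields all the asserted equalities.

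The genuinely substantive input is the dimension formula of Step 1, which is why the hypotheses pin down $C^2$-regularity, ergodicity and finiteness; the passage to $\nu$ is soft, and this is exactly the advantage of the semi-continuity approach, since one never has to control the possibly infinite entropy of $\nu$ with respect to any partition. If one also wishes to certify that $T$ itself is not inverse-dimension-expanding, Young's Lemma applies, but this is not needed: the sandwich above already forces $dim_H^0\nu=dim_H^1\nu$.
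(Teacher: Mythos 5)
Your proposal is correct and follows essentially the same route as the paper, whose proof is just the citation list you unpack: Young's Main Theorem gives $dim_H^0\nu_n=dim_H^1\nu_n=h_{\nu_n}(T_n)\big(\frac{1}{\lambda_1(T_n)}-\frac{1}{\lambda_2(T_n)}\big)$ for each $n$, and Theorems \ref{thm1} and \ref{thm4} give the sandwich that collapses. Your observation that Young's Lemma is not strictly needed (the sandwich already forces $dim_H^0\nu=dim_H^1\nu$) is a small but accurate refinement of the paper's citation of it.
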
 
\begin{proof}
This follows from \cite[Main Theorem]{You}, Young's Lemma and our two semi-continuity results Theorem \ref{thm1} and \ref{thm4}.
\end{proof}

We focus on the measure-dimension mappings $dim_H^0, dim_H^1, dim_C, dim_{MC}$ in this work, but it would be interesting to ask the continuity question on other measure-dimension mappings, under suitable topology on $\mathcal{M}(X)$ (may not be restricted to the three kinds of topology we rely on). For example, the (\emph{upper}- and \emph{lower}-) $L^q$ \emph{dimension} (see for example \cite{Ren}), the (\emph{upper}- and \emph{lower}-) \emph{entropy dimension} (see for example \cite[(1.2)]{PerS} or \cite[2.9]{BHR}) and  the \emph{lower} and \emph{upper} \emph{box-counting} or \emph{packing} measure-dimension mappings defined as following:
  
\begin{center}
$dim_B^0 \nu=\inf\{BD(A): \nu(A)>0, A\in\mathscr{A}\}$,
\end{center}
\begin{center}
$dim_P^0 \nu=\inf\{PD(A): \nu(A)>0, A\in\mathscr{A}\}$,
\end{center}
and
\begin{center}
$dim_B^1 \nu=\inf\{BD(A): \nu(A)=1, A\in\mathscr{A}\}$,
\end{center}
\begin{center}
$dim_P^1 \nu=\inf\{PD(A): \nu(A)=1, A\in\mathscr{A}\}$,
\end{center}
in which $BD(A)$ and $PD(A)$  represent respectively the box-counting and packing dimension of a set $A$.

For an arbitrary finite measure $\nu$ on $X$ without dynamical structure, the lower and upper logarithmic density of the measure $\nu$ give upper and lower bounds on $dim_H^1 \nu$ and $dim^0_H \nu$. Now let 
\begin{center}
$\underline{\gamma}=\sup \big\{a\geq 0: \nu\{x\in X:\underline{d}(\nu,x)>a\}>0\big\}$
\end{center}
and 
\begin{center}
$\overline{\gamma}=\inf \big\{a\geq 0: \nu\{x\in X:\overline{d}(\nu,x)<a\}>0\big\}$.
\end{center}
According to \cite[Proposition 2.1]{You}, it is easy to see that $dim_H^1 \nu$ and $dim^0_H \nu$ satisfies 
\begin{equation}\label{eq15}
\underline{\gamma}\leq dim^0_H \nu\leq dim_H^1 \nu\leq \overline{\gamma}.
\end{equation}

In many cases, we could expect that $dim_H^0 \nu$ or $dim^1_H \nu$ can take the value $\overline{\gamma}$ or $\underline{\gamma}$ . For example, if we have 
\begin{center}
$\nu\{x\in X: d(\nu,x)=\underline{\gamma}\}>0$,
\end{center}
or there exist two decreasing sequences $\{\underline{\gamma}_{n,1}\}_{n=1}^\infty$ and $\{\underline{\gamma}_{n,2}\}_{n=1}^\infty$ with 
\begin{center}
$\underline{\gamma}_{n,1}<\underline{\gamma}_{n,2}$ for any $n\in\mathbb{N}$ and $\underline{\gamma}_{n,1}, \underline{\gamma}_{n,2}\rightarrow \underline{\gamma}$ as $n\rightarrow\infty$,
\end{center}
 such that
\begin{center}
$\nu\{x\in X: \underline{\gamma}_{n,1}<\underline{d}(\nu,x)\leq\overline{d}(\nu,x)<\underline{\gamma}_{n,2}\}>0$,
\end{center}
then  $dim^0_H \nu=\underline{\gamma}$. Examples in which the inequality $dim^0_H \nu<\overline{\gamma}$ or $dim_H^1 \nu>\underline{\gamma}$ in (\ref{eq15}) are easy to get. 
 
\begin{exm}\label{exm2}
Let $\nu$ be a finite measure on $[0,1]$ with $d(\nu,x)=c$ a.e. on $[0,1]$, let $\nu'$ be a finite measure on $[1,2]$ with $d(\nu',x)=c'>c$ a.e. on $[1,2]$. Consider the measure $\nu''$ on $[0,2]$ to be
\begin{center}
$\nu''(A)=\nu(A\cap[0,1])+\nu'(A\cap[1,2])$ 
\end{center}
for any measurable set $A\subset[0,2]$.
\end{exm}
It is easy to see that, in Example \ref{exm2}, we have $\underline{\gamma}=c$ and $\overline{\gamma}=c'$. Moreover, we have
\begin{center}
$dim^0_H \nu=\underline{\gamma}=c<\overline{\gamma}$ 
\end{center}
and  
\begin{center}
$dim_H^1 \nu=\overline{\gamma}=c'>\underline{\gamma}$. 
\end{center}
We are not able to give an example such that the strict inequality  

\begin{equation}\label{eq16}
dim^0_H \nu>\underline{\gamma}
\end{equation}
 or
\begin{equation}\label{eq17}
dim_H^1 \nu<\overline{\gamma}
\end{equation}
holds, simultaneously or separately. Thus we would like to pose it as an open problem here.

\end{document}